\newcommand{\Q}{\mathbb{Q}}
\newcommand{\Z}{\mathbb{Z}}
\newcommand{\bq }{\begin{equation}}
\newcommand{\eq }{\end{equation}}
\theoremstyle{plain}
\newtheorem{thm}{Theorem}
\newtheorem{lem}[thm]{Lemma}
\newtheorem{prop}[thm]{Proposition}
\newtheorem{cor}[thm]{Corollary}
\newtheorem{rem}[thm]{Remark}
\theoremstyle{definition}
\theoremstyle{example}
\title{plane non-singular curves with an element of ``large'' order in its automorphism group}
\author[E. Badr] {Eslam Badr}
\address{$\bullet$\,\,Eslam Badr}
\address{Departament Matem\`atiques, Edif. C, Universitat Aut\`onoma de Barcelona\\
08193 Bellaterra, Catalonia}
\email{eslam@mat.uab.cat}
\address{Department of Mathematics,
Faculty of Science, Cairo University, Giza-Egypt}
\email{eslam@sci.cu.edu.eg}
\author[F. Bars] {Francesc Bars}
\address{$\bullet$\,\,Francesc Bars}
\address{Departament Matem\`atiques, Edif. C, Universitat Aut\`onoma de Barcelona\\
08193 Bellaterra, Catalonia} \email{francesc@mat.uab.cat}
\thanks{E. Badr and F. Bars are supported by MTM2013-40680-P}
\keywords{non-singular curves; plane models; automorphism groups;
moduli spaces}
\subjclass[2010]{14H37, 14H50, 14H45}
\begin{document}

\maketitle

\begin{abstract} Let $M_g$ be the moduli space of smooth, genus $g$
curves over an algebraically closed field $K$ of zero
characteristic. Denote by ${M_g(G)}$ the subset of $M_g$ of curves
$\delta$ such that $G$ (as a finite non-trivial group) is isomorphic
to a subgroup of $Aut(\delta)$ and let $\widetilde{M_g(G)}$ be the
subset of curves $\delta$ such that $G\cong Aut(\delta)$ where
$Aut(\delta)$ is the full automorphism group of $\delta$. Now, for
an integer $d\geq 4$, let $M_g^{Pl}$ be the subset of $M_g$
representing smooth, genus $g$ plane curves of degree $d$ (in this
case, $g=(d-1)(d-2)/2$) and consider the sets
$M_g^{Pl}(G):=M_g^{Pl}\cap M_g(G)$ and
$\widetilde{M_g^{Pl}(G)}:=\widetilde{M_g(G)}\cap M_g^{Pl}$.

%Denote by $\Z/m$ a cyclic group of order $m$. First, in this note,
%once we fix $d$, we determine an algorithm in order to list the $m$
%such that $M_g^{Pl}(\mathbb{Z}/m)$ is non-empty, in particular $m$
%should divide one of the following numbers: $d-1$,$d$,$d^2-3d+3$,
%$(d-1)^2$, $d(d-2)$ or $d(d-1)$.
 In this note we first determine, for an arbitrary but a fixed
degree $d$, an algorithm to list the possible values $m$ for which
$M_g^{Pl}(\mathbb{Z}/m)$ is non-empty where $\Z/m$ denotes the
cyclic group of order $m$. In particular, we prove that $m$ should
divide one of the integers: $d-1$, $d$, $d^2-3d+3$, $(d-1)^2$,
$d(d-2)$ or $d(d-1)$. Secondly, consider a curve $\delta\in
M_g^{Pl}$ with $g=(d-1)(d-2)/2$ such that $Aut(\delta)$ has an
element of ``very large'' order, in the sense that this element is
of order $d^2-3d+3$, $(d-1)^2$, $d(d-2)$ or $d(d-1)$. Then we
investigate the groups $G$ for which
$\delta\in\widetilde{M_g^{Pl}(G)}$ and also we determine the locus
$\widetilde{M_g^{Pl}(G)}$ in these situations. Moreover, we work
with the same question when $Aut(\delta)$ has an element of
``large'' order $\ell d$, $\ell (d-1)$ or $\ell(d-2)$ with $\ell\geq
2$ an integer.

\end{abstract}
%%%%%%%%%%%%%%%%%%%%%%%%%%%%%%%%%%%%%%%%%
\section{Introduction}
{It is well known that} any $\delta\in M_g^{Pl}(G)$ corresponds to a
set {$\{C_{\delta}\}$ }of non-singular plane models in
$\mathbb{P}^2(K)$ such that any two of them are {$K$-isomorphic
through a projective transformation $P\in PGL_3(K)$} (where
$PGL_3(K)$ is the classical projective linear group of $3\times 3$
invertible matrices over $K$), and their automorphism groups are
conjugate. {If $C$ is a non-singular plane model of $\delta$ which
is defined by the homogenous equation $F(X;Y;Z)=0$ then $Aut(C)$ is
a finite subgroup of $PGL_3(K)$ and also we have $\rho(G)\preceq
Aut(C)$ for some injective representation $\rho:G\hookrightarrow
PGL_3(K)$. Moreover, $\rho(G)=Aut(C)$ whenever
$\delta\in\widetilde{M_g^{Pl}(G)}$.}
\par We denote by $\rho(M_g^{Pl}(G))$ the set of all elements $\delta\in
M_g^{Pl}(G)$ such that $G$ acts on a plane model associated to
$\delta$ as ${P^{-1}\rho(G)P}$ for some $P$ {inside $PGL_3(K)$}.
This gives us the following disjoint union decomposition:
$$M_g^{Pl}(G)=\cup_{[\rho]\in A}\rho (M_g^{Pl}(G))$$
where  $A:=\{\rho\,\,|\,\,\rho:G\hookrightarrow PGL_3(K)\}/\sim$
such that $\rho_a\sim\rho_b$ if and only if
$\rho_a(G)={P^{-1}\rho_b(G)P}$ for some $P\in PGL_3(K)$. A similar
decomposition follows for $\widetilde{M_g^{Pl}(G)}$.

{For a fixed degree $d$}, it is a difficult task to list the
$[\rho]'s$ and the groups $G$ such that $\rho(M_g^{Pl}(G))$ is
non-empty, see Henn work \cite{He} and Komiya-Kuribayashi work
\cite{KuKo2} for degree 4 and \cite{BaBa3} for degree 5. {For a
cyclic group $G\cong\Z/m$ of order $m$}, Dolgachev in \cite{Dol}
determined the $[\rho]'s$ and $m$ such that
${\rho(M_3^{Pl}(\Z/m\Z))}$ is non-trivial and {moreover he
associated} to such locus (once $\rho$ and $m$ are fixed), a normal
form, i.e. a certain projective equation which depends on some
parameters together with some algebraic restrictions to these
parameters such that any element of the locus
$\rho(M_g^{P^l}(\Z/m\Z))$ corresponds to certain specialization of
the parameters. In \S2, following Dolgachev {technique}, we obtain a
general algorithm in order to determine $[\rho]'s$ and $m$ such that
${\rho(M_g^{Pl}(\Z/m\Z))}$ {might} be non-trivial and also to such
locus (once $\rho$ and $m$ are fixed) we associate a normal form
(see Remark \ref{rem21} for a link to an implementation of the
algorithm in SAGE, and the appendix for {listing} the results {that}
are given by the algorithm until degree 9). As a consequence of the
algorithm {(Theorem \ref{thm20})} we obtain that {$m$ always}
divides one of the following integers: $d^2-3d+3$, $(d-1)^2$,
$d(d-2)$ or $d(d-1)$, which {we believe that} is well-known to the
specialists.

Secondly, there is a lot of interest on non-singular curves having a
large automorphism group{:} For $K=\mathbb{C}$ a curve $\delta\in
M_g$ has large automorphism group if it has a neighborhood ({with
respect to} the complex topology) in $M_g$ such that any other curve
{inside the neighborhood} has a smaller automorphism group. {For
such situations} $\delta$ admits a model defined over $\Q$,
$\delta/Aut(\delta)$ corresponds to the projective line and the
Galois cover $\delta\rightarrow \delta/Aut(\delta)$ is a Belyi
morphism, in particular {it is ramified exactly} at 3 points (the
last property of a Belyi morphism {that is} ramified at three points
{and} is a Galois cover{,} characterizes curves with large
automorphism group). {For more details, we refer to} Wolfart
\cite{Wol}. Another notion in the literature {for} $\delta$ {to be
of} large automorphism group is assuming that
$|Aut(\delta)|>4(g-1)$. In particular, for $\delta\in M_g^{Pl}$ it
means that $|Aut(\delta)|>2(d^2-3d+2)-4$ (in this case
$\delta\rightarrow\delta/Aut(\delta)$ is a map from $\delta$ to a
projective line which is ramified at 3 or 4 points, see
\cite[p.258-260]{FK}).

The above definitions of large automorphism group are very
restrictive {to} our proposes of plane curves $\delta\in M^{Pl}_g$
and in this paper we say that an element $\sigma\in Aut(\delta)$ is
``very large'' if its order is exactly one of the integers
$d^2-3d+3$, $(d-1)^2$, $d(d-2)$ or $d(d-1)$. We say that $\sigma\in
Aut(\delta)$ is ``large'' if its order is exactly one of the
following integers: $\ell d$, $\ell (d-1)$ or $\ell(d-2)$ {for some
integer $\ell\geq 2$}.

\noindent{In what follows $\xi_m$ denotes a primitive $m$-th root of
unity in $K$ and we obtain, in particular, the following results (in
\S3.1 to \S3.4) for $\delta\in M_g^{Pl}(\Z/m)$ such that $m$ is
``very large'' in the above sense.}

\begin{thm}Let $\delta\in M_g^{Pl}$ {be} a non-singular plane curve of degree
$d\geq 4$ and {let} $\sigma\in Aut(\delta)$ {where} $\sigma$ is
``very large''. Then {one of following cases occurs.}
\begin{enumerate}
\item if $\sigma$ has order $d(d-1)$ with $d\geq 5$ then $Aut(\delta)=<\sigma>$ and $\delta$ {is}
$K$-isomorphic to $X^d+Y^d+XZ^{d-1}=0.$ In particular {for} $d\geq
5$, $M_g^{Pl}(\Z/d(d-1)\Z)$ is an irreducible locus with one
element, and
$$\widetilde{M_g^{Pl}(\Z/d(d-1))}=M_g^{Pl}(\Z/d(d-1))=\rho(M_g^{Pl}(\Z/d(d-1)))$$
where
$\rho(\Z/d(d-1)\Z)=<diag(1,\xi_{d(d-1)}^{d-1},\xi_{d(d-1)}^d)>$.
{For the case $d=4$, one can read Remark \ref{rem3.2} in \S3.1 for
further details.}
\item if $\sigma$ has order $(d-1)^2$ then $Aut(\delta)=<\sigma>$ and
$\delta$ is $K$-isomorphic to $X^d+Y^{d-1}Z+XZ^{d-1}=0.$ {Also,}
$M_g^{Pl}(\Z/(d-1)^2\Z)$ is an irreducible locus with one element,
and
$$\widetilde{M_g^{Pl}(\Z/(d-1)^2)}=M_g^{Pl}(\Z/(d-1)^2)=\rho(M_g^{Pl}(\Z/(d-1)^2))$$
with
$\rho(\Z/(d-1)^2\Z)=<diag(1,\xi_{(d-1)^2},\xi_{(d-1)^2}^{(d-1)(d-2)})>$.
\item if $\sigma$ has order $d(d-2)$ then $\delta$ is $K$-isomorphic to
$X^d+Y^{d-1}Z+YZ^{d-1}=0$ and for $d\neq 4,6$ we have
$$H_d:=Aut(\delta)=<\sigma,\tau|\tau^2=\sigma^{d(d-2)}=1,\, and\,
\tau\sigma\tau=\sigma^{-(d-1)}>.$$ {Again,} $M_g^{Pl}(\Z/d(d-2)\Z))$
is an irreducible locus with one element, and
$$\widetilde{M_g(H_d)}=M_g^{Pl}(\Z/d(d-2))=\rho(M_g^{Pl}(\Z/d(d-2)))$$
where
$\rho(\Z/d(d-2)\Z)=<diag(1,\xi_{d(d-2},\xi_{d(d-2)}^{-(d-1)})>$.
{The automorphism groups for $d=4,6$ are given explicitly in \S 3.3,
Proposition \ref{prop15}}.
\item if $\sigma$ has order $d^2-3d+3$ then $\delta$ is $K$-isomorphic to the Klein
curve $K_d:X^{d-1}Y+Y^{d-1}Z+Z^{d-1}X=0$ and for $d\geq 5$ we have
$H_{K_d}:=Aut(\delta)=<\sigma,\tau|\sigma^{d^2-3d+3}=\tau^3=1\,
and\, \sigma\tau=\tau\sigma^{-(d-1)}>$. {The locus}
$M_g^{Pl}(\Z/(d^2-3d+3)\Z))$ is irreducible with one element, and
$$\widetilde{M_g(H_{K_d})}=M_g^{Pl}(\Z/(d^2-3d+3))=\rho(M_g^{Pl}(\Z/(d^2-3d+3)))$$ where
$\rho(\Z/(d^2-3d+3)\Z)=<diag(1,\xi_{d^2-3d+3},\xi_{d^2-3d+3}^{-(d-2)})>$.
{We refer to Remark \ref{rem18} of \S 3.4 for the classical case
$d=4$.}
\end{enumerate}
\end{thm}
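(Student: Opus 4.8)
The plan is to work throughout with an explicit non-singular plane model $C:F(X;Y;Z)=0$ of $\delta$ and to exploit that, since $\sigma\in Aut(C)\subset PGL_3(K)$ has finite order $m$, it can be diagonalized. First I would conjugate inside $PGL_3(K)$ so that $\sigma=\mathrm{diag}(1,\xi_m^{a},\xi_m^{b})$ for suitable exponents $a,b$. Then $\sigma$ acts on the monomial $X^iY^jZ^k$ (with $i+j+k=d$) by the scalar $\xi_m^{aj+bk}$, and invariance of the curve forces $F$ to be \emph{semi-invariant}, i.e. supported only on monomials sharing a single common eigenvalue $\xi_m^{e}$. Thus recovering $\delta$ reduces to determining, for each admissible pair $(a,b)$, which degree-$d$ monomials satisfy $aj+bk\equiv e \pmod m$.

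The next step is to cut down this support using smoothness. The key input is the classical vertex criterion: if $F$ is non-singular, then at each coordinate point at least one of three specific monomials must occur, namely one of $X^{d},X^{d-1}Y,X^{d-1}Z$ at $(1:0:0)$, one of $Y^{d},XY^{d-1},Y^{d-1}Z$ at $(0:1:0)$, and one of $Z^{d},XZ^{d-1},YZ^{d-1}$ at $(0:0:1)$. Translating through the eigenvalue map, each vertex yields a three-way congruence on $e$ modulo $m$, giving in total a finite system of $3\times 3\times 3$ cases. Because $m$ is ``very large'' (one of $d(d-1)$, $(d-1)^2$, $d(d-2)$, $d^2-3d+3$, i.e. a near-maximal value permitted by Theorem~\ref{thm20}), these congruences are extremely rigid: most of the $27$ combinations are inconsistent, and the surviving ones pin down $(a,b)$ and $e$ modulo $m$ up to the obvious $S_3$-symmetry permuting coordinates. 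This fixes the monomial support of $F$, and after rescaling coordinates one recognizes exactly the four normal forms listed in (1)--(4), whose non-singularity one then verifies directly.

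It remains to compute the \emph{full} automorphism group. First I would exhibit the claimed extra automorphism in cases (3) and (4) --- the involution $\tau$ (resp.\ the order-$3$ element) permuting coordinates --- and check the stated relations, so that $H_d$ and $H_{K_d}$ indeed sit inside $Aut(\delta)$. The substantial point, and the one I expect to be the main obstacle, is the reverse inclusion: proving that there are no further automorphisms. For this I would invoke the classification of finite subgroups of $PGL_3(K)$ (Mitchell, Blichfeldt) together with the structure theory of automorphism groups of smooth plane curves: the presence of the large cyclic subgroup $\langle\sigma\rangle$, of order comparable to $d^2$, forces $Aut(\delta)$ into the imprimitive, point- or triangle-fixing case and excludes the exceptional primitive groups for $d\geq 5$. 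Any extra element must then normalize $\langle\sigma\rangle$ and hence permute the distinguished coordinate data; a short check of which permutations preserve the monomial support shows that in cases (1)--(2) none survives, whence $Aut(\delta)=\langle\sigma\rangle$, while in (3)--(4) exactly $\tau$ survives, giving $H_d$ and $H_{K_d}$. The delicacy here is genuinely $d$-dependent: for small degree the exceptional primitive groups do appear (most famously $Aut(K_4)\cong PSL(2,7)$ for the Klein quartic), which is precisely why the statement restricts to $d\geq 5$ in (4) and defers $d=4$ to the remarks; those low-degree exceptions I would treat separately.

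Finally, the moduli statements follow immediately once the normal form is known: in each case the support of $F$ is forced and all coefficients can be normalized to $1$ by rescaling, so there are no free parameters. Hence $M_g^{Pl}(\mathbb{Z}/m)$ consists of a single point, there is a unique representation class $[\rho]$ with $\rho(\mathbb{Z}/m)$ generated by the diagonal matrix in the statement, and the loci $M_g^{Pl}(\mathbb{Z}/m)$, $\rho(M_g^{Pl}(\mathbb{Z}/m))$ and $\widetilde{M_g^{Pl}(\mathbb{Z}/m)}$ (equivalently $\widetilde{M_g(H_d)}$, $\widetilde{M_g(H_{K_d})}$ in the last two cases) all coincide.
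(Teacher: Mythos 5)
Your derivation of the normal forms is correct and is essentially the paper's own route: your ``semi-invariance plus vertex criterion'' analysis is exactly the content of Theorem \ref{thm20}, whose case division according to how many reference points lie on the curve encodes your $3\times 3\times 3$ congruence system, and your ``rigidity'' step is carried out in the paper by the explicit computations of the index sets $S(2)^{j,X}$, $S_1^{d,X}$, etc.\ in Propositions \ref{prop11}, \ref{prop13}, \ref{prop17} and \ref{prop14}, which show all extra parameters vanish when $m$ is very large.

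The genuine gap is in your determination of the full automorphism group: you assert that any extra automorphism must normalize $\langle\sigma\rangle$ and hence act monomially on the distinguished coordinates, so that everything reduces to checking which coordinate permutations preserve the monomial support. That claim is unjustified, and in the relevant generality it is false. The Mitchell/Harui classification (Theorem \ref{teoHarui}) leaves open the case in which $Aut(\delta)$ fixes a point off the curve and fits into $1\rightarrow N\rightarrow Aut(\delta)\rightarrow G'\rightarrow 1$ with $N$ cyclic of order dividing $d$ and $G'\leq PGL_2(K)$ cyclic, dihedral, $A_4$, $S_4$ or $A_5$; such automorphisms are in general neither monomial in $\sigma$'s eigenbasis nor normalizers of $\langle\sigma\rangle$. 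This possibility is not a phantom: for $d=6$ in your case (3) the curve $X^6+Y^5Z+YZ^5$ has automorphism group a central extension of $S_4$ by $\Z/6$ of order $144>2d(d-2)=48$ (Proposition \ref{prop15}), and for $d=4$ in case (1) one gets $\Z/4\circledcirc A_4$ (Remark \ref{rem3.2}) --- concrete non-monomial automorphisms that your support check would never see. So even under the stated restrictions $d\geq 5$ (and $d\neq 6$ in case (3)) your proof must actively exclude the $PGL_2$-quotient alternative rather than bypass it via normalization; the paper does this with an order argument --- e.g.\ for $(d-1)^2$: since $\gcd(|N|,(d-1)^2)=1$, one would need $(d-1)^2\mid |G'|$, impossible for $\Z/m$ with $m\leq d-1$, for $D_{2m}$ with $m\mid d-2$, and for $A_4,S_4,A_5$ --- after which $Aut(\delta)$ fixes a point on the curve, is cyclic, and Corollary \ref{cor5} pins its order to exactly $(d-1)^2$; for cases (1), (3) and (4) it instead invokes Harui's computations (Propositions \ref{prop111}, \ref{prop15}, \ref{prop16}). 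Supply an analogous order/structure argument for the exterior case in each of your four situations and the remainder of your plan goes through.
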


\begin{rem} The above situations {do not fit} with curves {that have}
large automorphism group {in the classical definition.} For example,
the curve $\delta:X^d+Y^{d-1}Z+XZ^{d-1}=0$ is defined over $\Q$,
$\delta/Aut(\delta)$ is a projective line and the morphism
$\delta\rightarrow \delta/Aut(\delta)$ is ramified at two points of
ramification index $(d-1)^2$ and at $d-1$-points of ramification
index $d-1$. Therefore this curve has no a large automorphism group
in any of the classical sense because it ramifies at more than 4
points. But it has ``very large'' elements in its automorphism
group.
\end{rem}
{Now assuming that $m$ is ``large'' in the sense that $m\in\{\ell
d,\ell (d-1),\ell(d-2):\,\,\ell\geq 2\}$, we obtain different
results in \S4 and \S5, some of them are listed below:}
\begin{thm}Let $\delta\in M_g^{Pl}$ {be} a non-singular plane curve of degree
$d\geq 4$ {that admits an automorphism $\sigma\in Aut(\delta)$ of
``large'' order.} Then
\begin{enumerate}
\item if $\sigma$ has order $\ell (d-1)$ with $\ell\geq 2$,
{we always have} $d\equiv 0\, or\, 1\,(mod\ \ell)$ and $Aut(\delta)$
is cyclic of order $\ell'(d-1)$ with $\ell|\ell'$. If $\ell=1$, the
same conclusion holds if $\sigma$ is a homology ({By a homology we
mean that} {$P^{-1}\rho(\sigma) P=diag(1,\xi_m^a,\xi_m^b)$ with
$a=0$ or $b=0$} for some $P\in PGL_3(K)$).
\item if $\sigma$ has order $\ell d$ with $\ell\geq 3$ then $d\equiv 1\,or\, 2\,(mod\,\ell)$, $Aut(\delta)$ fixes
a line and a point off that line (in particular, following the same
notations of \S3, it is an exterior group as in Theorem
\ref{teoHarui} (2) with $N$ of order $d$). When $\ell=2$,
$Aut(\delta)$ could also be conjugate to a subgroup of $Aut(F_d)$
where $F_d$ is the Fermat curve $X^d+Y^d+Z^d=0$ (in such cases we
say that $\delta$ is a descendent of the Fermat curve, see the
precise definition in \S3).
\item if $\sigma$ has order $\ell (d-2)$ with $\ell\geq 2$ then always $d\equiv 0(mod\ \ell)$ {and, roughly speaking,} for $d>6$
{and $d\neq 10,$ we can think about $Aut(\delta)$ in a short exact
sequence} $1\rightarrow\Z/k\rightarrow Aut(\delta)\rightarrow
D\rightarrow 1$ with $k$ divides $d$ and $D$ is the Dihedral group
$D_{2(d-2)}$ or $D_{d-2}$. {For more accurate details, we refer to
\S 4.2}
\end{enumerate}
\end{thm}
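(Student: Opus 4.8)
The plan is to combine three ingredients: the divisibility restrictions coming from the algorithm of \S2 (Theorem \ref{thm20}), Harui's structural classification of automorphism groups of smooth plane curves (Theorem \ref{teoHarui}), and an element-order bookkeeping in the three ``exceptional'' families (Fermat, Klein and the finitely many primitive groups). The divisibility step is purely arithmetic and is carried out first, separately for each case. Writing $m$ for the order of $\sigma$, Theorem \ref{thm20} forces $m$ to divide one of $d-1,\,d,\,d^2-3d+3,\,(d-1)^2,\,d(d-2),\,d(d-1)$. For $m=\ell(d-1)$ one uses $\gcd(d-1,d)=1$ together with the congruences $d^2-3d+3\equiv 1$ and $d(d-2)\equiv -1\ (\mathrm{mod}\ d-1)$ to discard every divisor except $(d-1)^2$ and $d(d-1)$; these survive exactly when $\ell\mid(d-1)$ or $\ell\mid d$, i.e. $d\equiv 0,1\ (\mathrm{mod}\ \ell)$. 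The same reductions modulo $d$ (using $\gcd(d,d^2-3d+3)=\gcd(d,3)$ and $(d-1)^2\equiv 1$) give $d\equiv 1,2\ (\mathrm{mod}\ \ell)$ in case (2), and the reductions modulo $d-2$ give $d\equiv 0\ (\mathrm{mod}\ \ell)$ in case (3).

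The second ingredient is to locate $Aut(\delta)$ inside Harui's list. Since $\sigma$ has finite order it is conjugate to a diagonal $\mathrm{diag}(1,\xi_m^a,\xi_m^b)$, but the relevant dichotomy is whether the whole group fixes a common point and line. Here the key computation is the spectrum of element orders in the exceptional families. In $Aut(F_d)\cong(\Z/d)^2\rtimes S_3$ any element mapping to a $3$-cycle cubes to a scalar, hence has order $3$ in $PGL_3(K)$; consequently the only elements whose order is a proper multiple of $d$ are those over a transposition, and these have order at most $2d$. This is precisely why in case (2) the value $\ell\geq 3$ excludes the Fermat curve while $\ell=2$ does not, and it is the structural reason for the ``descendent of Fermat'' alternative appearing only for $\ell=2$. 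For the Klein curve $\gcd(d-1,d^2-3d+3)=1$ and $\gcd(d,d^2-3d+3)=\gcd(d,3)$ show that $Aut(K_d)$ carries no element of order a multiple of $d-1$ or of $d$ for $d\geq 5$, and the primitive groups ($A_5$, $A_6$, $PSL_2(\mathbb{F}_7)$ and the Hessian groups) have uniformly bounded element orders, so they are excluded once $m$ exceeds the finitely many admissible values; the residual small degrees are checked by hand. This leaves the ``exterior'' case of Theorem \ref{teoHarui}(2).

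Once we are in the exterior case we have an exact sequence $1\to N\to Aut(\delta)\to G'\to 1$ with $N$ cyclic of order dividing $d$ (the homologies with the common center) and $G'\hookrightarrow PGL_2(K)$ cyclic, dihedral or polyhedral. The final step is to read off the structure from where a large power of $\sigma$ lands. In case (2) the power $\sigma^{\ell}$ has order $d$ and must lie in $N$, forcing $|N|=d$ and leaving the image of $\sigma$ in $G'$ of order $\ell$; this gives the claim that $Aut(\delta)$ fixes a line and a point off it with $N$ of order $d$. In case (1) the absence of the factor $d$ forces $N$ to be small and $G'$ cyclic, so $Aut(\delta)$ is itself cyclic of order $\ell'(d-1)$ with $\ell\mid\ell'$, which one confirms against the Dolgachev-type normal forms of \S2 compatible with the models of the first main theorem. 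In case (3) the extra involution exchanging the two $\sigma$-fixed points off the common line produces the dihedral quotient, giving the exact sequence with $D\cong D_{2(d-2)}$ or $D_{d-2}$ and kernel $\Z/k$, $k\mid d$.

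The main obstacle is the bookkeeping in the exceptional families together with the genuinely special small degrees. Deciding, for each primitive group and for each of $F_d,\,K_d$, whether an element of the prescribed order $\ell d$, $\ell(d-1)$ or $\ell(d-2)$ can occur — and, when it can, whether the ambient group nonetheless still fixes a point and a line — is where the uniform argument breaks down; the degrees $d=4,6$ (Fermat/Klein coincidences and extra homologies) and $d=10$ in case (3) must be isolated and treated individually, which is exactly why these values are excluded from the clean statement. A secondary difficulty is promoting ``contains a cyclic group of order $\ell'(d-1)$'' to ``equals'', i.e. ruling out a strictly larger quotient $G'$; this is settled by comparing the semi-invariant monomials admitted by the normal form with the constraints that smoothness imposes on the defining polynomial.
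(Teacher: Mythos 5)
Your overall architecture (divisibility restrictions from Theorem \ref{thm20}, Harui's classification, and order bookkeeping in the Fermat/Klein/primitive families) coincides with the paper's, and those parts are sound: the congruence arithmetic in all three cases matches the paper's lemmas, and your observation that lifts of $3$-cycles in $Aut(F_d)$ cube to scalars, so that element orders in $Aut(F_d)$ are at most $2d$, is correct and is the bound the paper uses repeatedly. The genuine gap is that you replace the paper's decisive geometric tool by pure order bookkeeping in the extension $1\to N\to Aut(\delta)\to G'\to 1$, and the bookkeeping does not deliver the conclusions of cases (1) and (2). The paper's mechanism (Propositions \ref{prop30}, \ref{prop31}, \ref{prop32}) is: $\sigma^{\ell}$ is a homology of period $d-1$ (resp. $d$); by \cite[Lemma 3.7]{Harui} its center is an inner (resp. outer) Galois point, unique by Yoshihara \cite{Yoshihara}; uniqueness forces this point to be fixed by all of $Aut(\delta)$; in case (1) the point lies \emph{on} the curve, so \cite[Lemma 11.44]{Book} gives cyclicity at once, and in case (2), together with Mitchell's result that the axis is then also invariant, it identifies the group's fixed point with the homology center, whence $\sigma^{\ell}\in N$ and $|N|=d$. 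Your substitutes break at exactly these junctures. In case (1), ``$G'$ cyclic, so $Aut(\delta)$ is itself cyclic'' is a non sequitur (an extension of a cyclic group by a cyclic group need not be cyclic; one needs that $N$ is central and that $|N|$, dividing $d$, is coprime to $d-1$); nothing you wrote rules out Harui's dihedral option $G'\cong D_{2m}$ with $|N|=1$, which does contain an element of order $\ell(d-1)$; the claim that ``$N$ is small'' contradicts the statement itself, since $\ell'$ may equal $d$ (the curve $X^d+Y^d+XZ^{d-1}$ has cyclic group of order $d(d-1)$, i.e. $|N|=d$); and the $\ell=1$ homology clause is not addressed at all --- indeed it \emph{cannot} be reached by order bookkeeping, because that clause distinguishes homologies from other elements of the same order $d-1$, and only the Galois-point argument uses the homology hypothesis.

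In case (2) your assertion that $\sigma^{\ell}$ ``must lie in $N$'' is unproved and does not follow from the numerics: for instance with $d=8$, $\ell=3$ (so $m=24$ and $d\equiv 2\ (mod\ 3)$), the constraints that $|\langle\sigma\rangle\cap N|$ divides $8$ and that the image of $\sigma$ in $G'$ generates a cyclic group of order at most $d-1=7$ allow $|\langle\sigma\rangle\cap N|=4$ with image of order $6$, in which case $\sigma^{3}\notin N$; only the identification of the common fixed point with the center of the homology $\sigma^{\ell}$ --- i.e. the uniqueness of the outer Galois point --- excludes this scenario. Finally, in case (3) the involution you invoke is not automatic: it exists because the normal form produced by Theorem \ref{thm20} for types $\ell(d-2),(a,b)$ turns out to be symmetric in $Y,Z$ up to rescaling (Propositions \ref{P1} and \ref{P2}), so the normal-form computation must actually be carried out before the dihedral-quotient argument can start, and that quotient analysis must still eliminate $A_4$, $S_4$, $A_5$, which the paper does via elements of order $>5$ for $d\neq 6,10$. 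In short: the skeleton and the exclusion arithmetic are right, but the proposal is missing the inner/outer Galois point argument on which the paper's proofs of (1) and (2) hinge, and the extension-theoretic shortcuts offered in its place are invalid as stated.
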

 \begin{rem} In the above situations {where $m$ is ``large'', we also obtain} that every element in $M_g^{Pl}(\Z/m)$
 is {given} by a certain specialization of the parameters {in a fixed normal form for} the full locus $M_g^{Pl}(\Z/m)$.
 {This phenomena is not true in general for an arbitrary $m$. In other words, with the aid of the algorithm in \S2,
 we prove that $\rho(M_g^{Pl}(\Z/m))$ has the property of being represented by an unique fixed normal form.
 But the moduli $M_g^{Pl}(\Z/m)$ with $m$ not ``large'' or ``very large'' is not
 in general given by a single equation with some parameters (counter examples are provided in \cite{BaBacyc1})}.
\end{rem}
\begin{rem}
 Take $K=\mathbb{C}$. Then, {one} should
 expect {to have non-singular plane} curves {which have} a ``large'' element in the
 automorphism group and {no plane model} (up to $\mathbb{C}$-isomorphism) defined over the
 algebraic closure of $\Q$ inside $\mathbb{C}$. {Let us
 reproduce the situation that has been mentioned in \cite[\S2.1]{BaBacyc1} for $d=5$ and a ``large'' element of order $8$,
 as an explicit example of the above phenomena.} Any element in $M_6^{Pl}(\Z/8)$ has{, up to $K$-isomorphism,} a plane models
 of the form $X^5+Y^4Z+XZ^4+\beta
X^3Z^2=0$ for certain/s $\beta$ ({note that} $\beta\neq\pm2$ {for
non-singularity}). We constructed in \cite{BaBacyc1} a bijection map
$$\varphi:M_6^{Pl}(\Z/8\Z)\rightarrow \mathbb{A}^1(K)\setminus\{-2,2\}/\sim$$
$$\alpha\mapsto [\beta]=\{\beta,-\beta\}$$
where $a\sim b\Leftrightarrow b=a\ or\ a=-b$, and we know that the
non-singular plane model $X^5+Y^4Z+XZ^4+\beta X^3Z^2=0$ has a bigger
automorphism group than $\Z/8\Z$ if and only if $\beta=0$.
\end{rem}

\section{Cyclic automorphism group of non-singular plane curves}
Fix and integer $d\geq 4$, and consider $\delta\in M_{g}^{Pl}$ such
that the group $G\cong Aut(\delta)$ is non-trivial. Let
$C:\,\,F(X;Y;Z)=0$ in $\mathbb{P}^2(K)$ be a non-singular plane
model of degree $d$ over an {algebraically} closed field $K$ of
characteristic zero. {Suppose that} $Aut(C)=\rho(G)\leq PGL_3(K)$
for some $\rho:G\hookrightarrow PGL_3(K)$ (any other {plane} model
of $\delta$ is given by ${C_P}:\,F(P(X;Y;Z))=0$ {for some $P\in
PGL_3(K)$ moreover $Aut({C_P})$ is conjugate through $P$ to
$Aut(C)$}, and we say that ${C_P}$ is $K$-equivalent or
$K$-isomorphic to $C$). Assume that $\rho(\sigma)\in Aut(C)$ is an
element of order $m$ hence by a change of variables  in
$\mathbb{P}^2$ (in particular, changing the plane model to a
$K$-equivalent one associated to $\delta$), we can consider
$\rho(\sigma)$ as the automorphism $(x:y:z)\mapsto (x:\xi_m^a
y:\xi_m^b z)$ where $\xi_m$ is a primitive $m$-th root of unity in
$K$ and $a,b$ are integers such that $0\leq a{\neq} b\leq m-1$.
Moreover, if $ab\neq 0$ then $m$ and $gcd(a,b)$ are coprime (we can
reduce to $gcd(a,b)=1$) and if $a=0$ then $gcd(b,m)=1$. Also, such
an automorphism is identified with type $m,(a,b)$ and we write
$\rho_{a,b,m}(\Z/m\Z)$ for the subgroup given by the diagonal matrix
$diag(1,\xi_m^a,\xi_m^b)$ in $PGL_3(K)$. {In} particular $\delta\in
\rho_{a,b,m}(M_g^{Pl}(\Z/m))$ and $\delta\in
\widetilde{\rho(M_g^{Pl}(G))}$, of course $\rho_{a,b,m}$ may be
interpreted as the restriction to $<\sigma>$ of $\rho$.

Our aim here is to {investigate which cyclic groups could appear
inside} $Aut(\delta)$, {thus to determine all possible types
$m,(a,b)$ for which the moduli $\rho_{a,b,m}(M_g^{Pl}(\Z/m))$ might
be non-empty}. We follow a similar approach {as Dolgachev in
\cite{Dol} which deal} with the same question for $d=4$ (see also
\cite[\S 2.1]{Bars}).

%Let $\sigma\in Aut(C)$ be of maximal order $m$, then we can assume,
%without any loss of generality, that $\sigma$ is given in its canonical Jordan form by $(x:y:z)\mapsto
%(x:\xi_m^a y:\xi_m^b z)$ where $\xi_m$ is a primitive $m$-th root of
%unity in $K$ and $a,b$ integers such that $0\leq a\neq b\leq m-1$
%with $a\leq b$ and $gcd(a,b)$ coprime with $m$ if $ab\neq 0$(
%we can reduce to $gcd(a,b)=1$) and $gcd(b,m)=1$ if $a=0$.

Throughout this paper, {we use} the following notations.
\begin{itemize}
\item Type $m, (a,b)$ is identified with the corresponding automorphism $[X;\zeta_m^aY;\zeta_m^bY]$
where $\zeta_m$ is a primitive $m$-th root of unity. Saying that $m,
(a,b)$ is a generator of {$\rho(\Z/m)$} for certain
$\rho:\Z/m\hookrightarrow PGL_3(K)$ means that any element of
{$\rho(\Z/m)$} is a power of the {associated} automorphism {to} Type
$m,(a,b)$.
\item $L_{i,*}$ denotes a degree $i$, homogeneous
polynomial in $K[X,Y,Z]$ {such that the variable $*\in\{X,Y,Z\}$
does not appear.}
\item $S(u)_m:=\{j:\,\,u\leq j\leq d-1,\,\,d-j=0\,(mod\,\,m)\}$.
\item $S_u^{d,X}\,\,{m,(a,b)}:=\{i:\,\,u\leq i\leq d-u\,\,\text{and}\,\,ai+(d-i)b=0\,(mod\,\,m)\}$.
\item $S_u^{d-1,X}\,\,{m,(a,b)}:=\{i:\,\,1\leq i\leq d-u\,\,\text{and}\,\,ai+(d-1-i)b=0\,(mod\,\,m)\}$
\item $S(1)^{j,X}_{m,(a,b)}:=\{i:\,\,0\leq i\leq j\,\,\text{and}\,\,ai+(j-i)b=a\,(mod\,\,m)\}$.
\item $S(2)^{j,X}_{m,(a,b)}:=\{i:\,\,0\leq i\leq j\,\,\text{and}\,\,ai+(j-i)b=0\,(mod\,\,m)\}$.
\item $S^{j,Y}_{m,(a,b)}:=\{i:\,\,0\leq i\leq j\,\,\text{and}\,\,bi+(d-j)a=a\,(mod\,\,m)\}$.
\item $S^{j,Z}_{m,(a,b)}:=\{i:\,\,0\leq i\leq j\,\,\text{and}\,\,ai+(d-j)b=a\,(mod\,\,m)\}.$
\item $\Gamma_m:=\{(a,b)\in \mathbb{N}^2:\,\,g.c.d\,(a,b)=1,\,\,\,1\leq a\neq b\leq m-1\}.$
\item the points {$P_1:=(1:0:0), P_2:=(0:1:0)$ and
$P_3:=(0:0:1)$ inside $\mathbb{P}^2(K)$ are called the reference points.}
\item {$\alpha\in K^*$ and it can always be $1$ by a change of variables.}
\end{itemize}
where $u,j,m,d,a$ and $b$ are {all} non-negative integers.

%In this section, we give a complete list of non-singular plane curves of an arbitrary but fixed degree $d\geq4$
%which admits a non-trivial automorphism of order $m$ such that $p\nmid m$.
%Moreover, we attach to each model an equation which is unique up to projective equivalence.
\begin{thm}\label{thm20}
Let $\delta\in M_g^{Pl}$ be a non-singular projective plane curve of
degree $d\geq 4$ over an algebraically closed field ${K}$ of zero
characteristic. If $H$ is a non-trivial cyclic subgroup of
$Aut(\delta)$ of order $m$, then $\delta\in
\rho_{a,b,m}(M_g^{Pl}(\Z/m))$ for the following list {$(1)$-$(6)$ of
values} of $a,b,m$. {We associate to each locus a normal form, that
is unique up to $K$-equivalence. Any plane model in
$\mathbb{P}^2(K)$ of an element
$\delta\in\rho_{a,b,m}(M_g^{Pl}(\Z/m))$ is obtained by a certain
specialization of the parameters in the normal form and, any
specialization of the parameters (under certain restrictions in the
parameters) gives a plane non-singular model of an element of this
locus:} \footnote{We {warn} the reader { because {it} may happen
that a projective equation, {which is obtained by a certain} type
$m(a,b)$, is not} geometrically irreducible or not non-singular for
any {specialization of the parameters}. Hence,
$\rho_{a,b,m}(M_g^{Pl}(\Z/m))$ is the empty set and then should be
discarded from the list.}
\begin{enumerate}
  \item The curve $\delta\in\rho_{m,0,1}(M_g^{Pl}(\Z/m))$ with
  $m|d-1$ and a plane model of the curve
  is of the form $$Z^{d-1}L_{1,Z}+\big(\sum_{j\in S(2)_m}Z^{d-j}L_{j,Z}\big)+L_{d,Z}.$$

  \item The curve $\delta\in \rho_{m,0,1}(M_g^{Pl}(\Z/m))$ with $m|d$ and a plane model of the curve has
  the form $$Z^d+\big(\sum_{j\in S(1)_m}Z^{d-j}L_{j,Z}\big)+L_{d,Z}.$$

 \item All reference points lie on {$\delta$:}
 The curve $\delta\in \rho_{m,a,b}(M_g^{Pl}(\Z/m))$ with $m\,|\,(d^2-3d+3)$ and $(a,b)\in\Gamma_m$ such that
 $a=(d-1)a+b=(d-1)b\,(mod\,\,m)${. In} particular $\delta$ has a plane
 non-singular model where all reference points lie on it, and a plane non-singular model of
 $\delta$ is given by certain specialization of
 $\alpha,\beta_{j,i},\alpha_{i,j},\gamma_{i,j}\in K$ of the equation
 \begin{eqnarray*}
X^{d-1}Y&+&Y^{d-1}Z+\alpha Z^{d-1}X+\\
&+&\sum_{j=2}^{\lfloor\frac{d}{2}\rfloor}\,\,X^{d-j}\big(\sum_{i\in
S(1)^{j,X}_{m,(a,b)}}\beta_{j,i}Y^iZ^{j-i}\big)+Y^{d-j}\big(\sum_{i\in
S^{j,Y}_{m,(a,b)}}\alpha_{j,i}Z^iX^{j-i}\big)+Z^{d-j}\big(\sum_{i\in
S^{j,Z}_{m,(a,b)}}\gamma_{j,i}X^{j-i}Y^i\big),
\end{eqnarray*}
\item Two reference points lie on {$\delta$:} One of the following subcases {occurs}.
\begin{enumerate}
  \item[(4.1)] $\delta\in\rho_{m,a,b}(M_g^{Pl}(\Z/m))$ where $m\,|\,d(d-2)$ and $(a,b)\in\Gamma_m$
  such that $(d-1)a+b\equiv0\,(mod\,m)$ and $a+(d-1)b\equiv0\,(mod\,m)\,\,.$ Moreover, a plane model $C$ of $\delta$ is given by
  a certain specialization of the parameters of the equation
\[
X^d+\big(\sum_{j=2}^{d-1}\,\,X^{d-j}\sum_{i\in
S(2)^{j,X}_{m,(a,b)}}\beta_{j,i}Y^iZ^{j-i}\big)
+\big(Y^{d-1}Z+\alpha YZ^{d-1}+\sum_{i\in
S_2^{d,X}\,\,{m,(a,b)}}\beta_{d,i}Y^iZ^{d-i}\big)=0,
\]

\item[(4.2)] $\delta\in\rho_{m,a,b}(M_g^{Pl}(\Z/m))$ where $m|(d-1)^2$ and $(a,b)\in\Gamma_m$ such that $(d-1)a+b\equiv0\, (mod\,m)$ and $(d-1)b\equiv0\, (mod\,m)$.
Furthermore, a plane non-singular model $C$ of $\delta$ is obtained
by a certain specialization {of} the parameters of {the equation}
\begin{eqnarray*}
X^d&+&\sum_{j=2}^{d-2}\,\,X^{d-j}\big(\sum_{i\in S(2)^{j,X}_{m,(a,b)}}\beta_{j,i}Y^iZ^{j-i}\big)+X\big(\alpha Z^{d-1}+
\sum_{i\in S_1^{d-1,X}\,\,{m,(a,b)}}\beta_{(d-1),i}Y^iZ^{d-1-i}\big)+\\
&+&\big(Y^{d-1}Z+\sum_{i\in
S_2^{d,X}\,\,{m,(a,b)}}\beta_{d,i}Y^iZ^{d-i}\big)=0
\end{eqnarray*}
\item[(4.3)] $\delta\in\rho_{m,a,b}(M_g^{Pl}(\Z/m))$ where $m|(d-1)$ and $(a,b)\in\Gamma_m$ such that
$(d-1)b\equiv0\,(mod\,m)$ and $(d-1)a\equiv0\,(mod\,m)$. In such
case a plane non-singular model $C$ of $\delta$ has the form
\begin{eqnarray*}
X^d&+&\sum_{j=2}^{d-2}\,\,\big(X^{d-j}\sum_{i\in S(2)^{j,X}_{m,(a,b)}}\beta_{j,i}Y^iZ^{j-i}\big)+
\sum_{i\in S_2^{d,X}\,\,{m,\,(a,b)}}\beta_{d,i}Y^iZ^{d-i}+\\
&+&X\big(Z^{d-1}+\alpha Y^{d-1}+\sum_{i\in
S_2^{d-1,X}\,\,{m,\,(a,b)}}\beta_{(d-1),i}Y^iZ^{d-1-i}\big),
\end{eqnarray*}
\end{enumerate}

\item One reference points lie {on $\delta$:} Then $\delta\in \rho_{m,a,b}(M_g^{Pl}(\Z/m))$ with $m|\,d(d-1)$
and $(a,b)\in\Gamma_m$ such that $da\equiv0\,(mod\,m)$ and $(d-1)b\equiv0\,(mod\,m)$. Also, a plane model of $\delta$ is given by the form
\begin{eqnarray*}
X^d&+&Y^d+\sum_{j=2}^{d-2}\,\,\big(X^{d-j}\sum_{i\in S(2)^{j,X}_{m,(a,b)}}\beta_{j,i}Y^iZ^{j-i}\big)+
\sum_{i\in S_1^{d,X}\,\,{m,\,(a,b)}}\beta_{d,i}Y^iZ^{d-i}+\\
&+&X\big(\alpha Z^{d-1}+\sum_{i\in
S_1^{d-1,X}\,\,{m,\,(a,b)}}\beta_{(d-1),i}Y^iZ^{d-1-i}\big)=0
\end{eqnarray*}
\item None of the reference points lie on a plane model $C$ of $\delta$, then $\delta\in \rho_{m,a,b}(M_g^{Pl}(\Z/m))$
where $m|d$ and $(a,b)\in\Gamma_m$ such that $da\equiv0\,(mod\,m)$
and $db\equiv0\,(mod\,m)$. Furthermore, we have
   \[
X^d+Y^d+Z^d+\sum_{j=2}^{d-1}\,\,\big(X^{d-j}\sum_{i\in
S(2)^{j,X}_{m,(a,b)}}\beta_{j,i}Y^iZ^{j-i}\big) +\sum_{i\in
S_1^{d,X}\,\,{m,\,(a,b)}}\beta_{d,i}Y^iZ^{d-i}=0.
\]
     \end{enumerate}
Here, $\alpha,\,\beta_{i,j}, \gamma_{i,j}, \alpha_{i,j}$ are
parameters which specialize, for a concrete $\delta$ as above, at
values in ${K}$ with always $\alpha\neq0$.

\end{thm}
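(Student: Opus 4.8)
The plan is to exploit the linearizability of finite-order projective automorphisms together with a monomial-by-monomial non-singularity analysis, in the spirit of Dolgachev's treatment of $d=4$. First I would fix a plane model $F(X;Y;Z)=0$ of $\delta$ and, as recalled just before the statement, diagonalize the generator $\rho(\sigma)$ of $H$ as $\mathrm{diag}(1,\xi_m^a,\xi_m^b)$ after a change of variables, with $0\le a\ne b\le m-1$ and the reduction $\gcd(a,b)=1$ when $ab\ne 0$. The key structural observation is that $\rho(\sigma)$ scales each monomial $X^{d-j-k}Y^jZ^k$ by $\xi_m^{aj+bk}$; since $F=0$ is $\rho(\sigma)$-invariant and $\rho(\sigma)$ is diagonal, $F$ must lie in a single eigenspace, i.e.\ there is a fixed residue $t\pmod m$ with $aj+bk\equiv t\pmod m$ for every monomial occurring in $F$. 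This reduces the description of $F$ to collecting all monomials of weight $t$, which is exactly what the index sets $S(u)_m$, $S(1)^{j,X}_{m,(a,b)}$, $S^{j,Y}_{m,(a,b)}$, etc.\ encode; reading them off produces the parametrized normal forms, and the residual scaling freedom in $PGL_3(K)$ lets me normalize the leading coefficients (and $\alpha$) to the displayed values.

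Next I would separate the homology case ($a=0$, where $\rho(\sigma)$ fixes a line pointwise) from the case $a,b\ne 0$, $a\ne b$, in which the reference points $P_1,P_2,P_3$ are precisely the isolated fixed points of $\rho(\sigma)$. The split into cases $(1)$--$(6)$ is then governed by how many reference points lie on $\delta$: since $P_i\in\delta$ is equivalent to the vanishing of the coefficient of the corresponding pure power (this being $F(P_i)$), the number of pure powers $X^d$ (weight $0$), $Y^d$ (weight $da$), $Z^d$ (weight $db$) compatible with the chosen weight $t$ pins down the configuration. Up to the coordinate permutations that conjugate $\rho(\sigma)$ to another diagonal form, there are six configurations, matching $(1)$--$(6)$.

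The arithmetic heart of the proof is the non-singularity input at the reference points lying on $\delta$. I would use the elementary fact that if $P_1=(1:0:0)\in\delta$, then $\delta$ is smooth at $P_1$ if and only if the linear part of $F$ there is nonzero, i.e.\ at least one of $X^{d-1}Y$ (weight $a$) or $X^{d-1}Z$ (weight $b$) occurs; since $a\ne b$ and both weights must equal $t$, exactly one occurs, and analogously at $P_2,P_3$. Equating the weight of each forced tangent monomial with $t$ yields the congruences listed in each case, and eliminating $a,b$ gives the divisibilities: for instance in case $(3)$ the conditions $a\equiv(d-1)a+b\equiv(d-1)b\pmod m$ force $(d^2-3d+3)a\equiv(d^2-3d+3)b\equiv0\pmod m$, whence $m\mid d^2-3d+3$ because $\gcd(a,b)=1$ (a prime-power-by-prime-power argument, since no prime can divide both $a$ and $b$). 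The same elimination, driven by the coprimality of $(a,b)$, produces $m\mid(d-1)^2$, $m\mid d(d-2)$, $m\mid d(d-1)$, $m\mid d$, and $m\mid d-1$ in the remaining cases. The homology cases $(1),(2)$ are handled in the same spirit by analysing smoothness at the fixed point $P_3$ and along the fixed line $Z=0$, forcing $m\mid d-1$ or $m\mid d$ according to whether $Z^d$ is absent or present.

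I expect the main obstacle to be the bookkeeping in case $(4)$: with exactly two reference points on $\delta$ and $X^d$ present (so $t=0$), smoothness at each of the two points offers a binary choice of tangent monomial, and I would have to enumerate the four resulting pairs of congruences and then collapse them to the three inequivalent subcases $(4.1)$--$(4.3)$ using the remaining $Y\leftrightarrow Z$ symmetry. Throughout, I must verify that the listed monomial families genuinely exhaust the weight-$t$ space, so that the stated congruences are not merely necessary but actually describe the full locus; and I would flag, as in the footnote, that some surviving normal forms may fail to contain any geometrically irreducible smooth member, in which case $\rho_{a,b,m}(M_g^{Pl}(\Z/m))$ is empty and is discarded. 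Establishing this exhaustiveness and the smooth-member check is the point requiring the most care.
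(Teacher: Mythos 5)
Your proposal is correct and follows essentially the same route as the paper's proof: diagonalize the generator, observe that invariance forces all monomials of $F$ into a single weight class mod $m$, split into the homology case $a=0$ (by presence or absence of $Z^d$) and the case $ab\neq 0$ stratified by how many reference points lie on the curve, and use smoothness at each reference point on the curve to force exactly one tangent monomial, whose weight congruences are eliminated via $\gcd(a,b)=1$ to yield the divisors $d-1$, $d$, $d^2-3d+3$, $(d-1)^2$, $d(d-2)$, $d(d-1)$ and the normal forms encoded by the index sets. Your explicit handling of the four tangent-monomial combinations in case $(4)$, collapsed to $(4.1)$--$(4.3)$ by the $Y\leftrightarrow Z$ symmetry, is exactly the paper's case division, so no further comparison is needed.
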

\begin{rem}\label{rem21}
The above result and its proof give an algorithm to list{,} for
every {fixed} degree $d$, all {cyclic groups} that could appear with
an equation (up to $K$-isomorphism){. For} the complete algorithm
and its implementation in SAGE, see the link
{http://mat.uab.cat/$\sim$eslam/CAGPC.sagews}{. Also} see the
appendix for a list of Types that could appear for degree $d\leq 9$
(i.e. the possible non-trivial $\rho_{m,a,b}(M_g^{Pl}(\Z/m))$ loci
for a {fixed} degree $d\leq 9$) with their equations {that are}
given by parameters. {These equations assign to specializations of
the parameters, plane models of the elements of the loci
$\rho_{m,a,b}(M_g^{Pl}(\Z/m))$.}
\end{rem}
%\begin{rem} The above result is also true when $K$ is an algebraic
%closed field of characteristic $p>0$ and we assume from the
%beginning that $m$ is always coprime with $p$, covering the cyclic
%groups of order coprime with the characteristic.
%\end{rem}

\begin{proof}
Without loss of generality{, we consider a plane model $C:
F(X;Y;Z)=0$} of $\delta$ such that the cyclic element order $m$ acts
as the diagonal matrix $diag(1,\xi_m^a,\xi_m^b)$ in the plane
equation $F(X;Y;Z)=0$. Let $\varphi$ be a generator of order
$m:=|H|$. One can choose coordinates so that $\varphi$ is
represented by
{$\big(x;y,z\big)\mapsto\big(x;\xi_m^ay,\xi_m^bz\big)$} {where $a,
b$} are integers with $0\leq a\neq b\leq m-1$\,( one can assume that
% \footnote{F: was written ''$b$ and $m$ are
%coprime''
%which was a contradiction on the tables, wrong. I replaced by the conditions that you
%may assume}
\, {$a<b$} with $gcd(b,m)=1$ if $a=0$ and with $gcd(a,b)=1$ {otherwise}):\\
\vspace{-.4cm}
\par\textbf{Case I\,:} Suppose first that $a=0$ {and} write:
$F(X;Y;Z)=\lambda Z^d+\big(\sum_{j=1}^{d-1}Z^{d-j}L_{j,Z}\big)+L_{d,Z}.$
\par If $\lambda=0$, then by non-singularity $L_{1,Z}\neq0$ and $(d-1)b=0\,(mod\,m)$. Hence, $m|d-1$ and we can take a generator $(a,b)=(0,1)$.
Therefore, by checking each monomial's invariance, {we obtain that
$L_{j,Z}\neq0$ only if $j\in S(2)_m$ and we get types $m,\,(0,1)$ of
$(1)$.}
\par If $\lambda\neq0$ then $db\equiv0(mod\, m)$. From which we {obtain} $m|d$ and $(a,b)=(0,1)$ is a generator for each such $m$.
{By the same discussion as before, we have} types $m,\,(0,1)$ of the form $Z^d+\big(\sum_{j\in S(1)_m}Z^{d-j}L_{j,Z}\big)+L_{d,Z}$, which proves $(2)$.\\
\vspace{-.4cm}
\par\textbf{Case II\,:} Suppose that $a\neq0$ then necessarily, $m>2$ {and we distinguish between} the following four subcases:
\begin{description}
\item[i.] All reference points lie in $C,$
\item[ii.] Two reference points lie in $C,$
\item[iii.] One reference point lies in $C,$
\item[iv.] None of the reference points lie in $C.$
\end{description}
\begin{itemize}
\item If all reference points lie on $C$, then the possibilities for the defining equation are
    now:
 \[
C:\,\,\sum_{j=1}^{\lfloor\frac{d}{2}\rfloor}\,\,\big(X^{d-j}L_{j,X}+Y^{d-j}L_{j,Y}+Z^{d-j}L_{j,Z}\big).
\]
Because $a\neq b$ with $a\neq 0$, we can assume that $C:
X^{d-1}Y+Y^{d-1}Z+\alpha
Z^{d-1}X+\sum_{j=2}^{\lfloor\frac{d}{2}\rfloor}\,\,\big(X^{d-j}L_{j,X}+Y^{d-j}L_{j,Y}+Z^{d-j}L_{j,Z}\big).$
The first three factors implies that
$a\equiv(d-1)a+b\equiv(d-1)b\,(mod\,m).$ In particular,
{$m|d^2-3d+3$. The defining equation $(3)$ follows immediately by
checking monomials' invariance in each $L_{j,B}$. For example,
rewrite $L_{j,X}$ as $\sum_{i=0}^j \beta_{j,i}Y^iZ^{j-i}$ then
$\beta_{j,i}=0$ if $m\nmid ai+(j-i)b$ or equivalently $i\notin
S(1)_{m,(a,b)}^{j,X}$, since $diag(1;\xi_m^a;\xi_m^b)\in
Aut(C)$.}\footnote {It is to be noted that for a fixed $m$ and
$(a_0,b_0)\in L_m$ where
$L_m:=\{(a,b)\in\Gamma_m:\,\,a\equiv(d-1)a+b\equiv(d-1)b\,(mod\,m)\}$,
the type $m, (a_0,b_0)$ is $K$-isomorphic to any type $m, (a',b')\in
<m, (a,b)>$. So, to complete the classification for $m$, it suffices
to choose another $(a,b)\in L_m-<(a_0,b_0)>$ and repeat until we get
$L_m=\emptyset$.}

\item If two reference points lie {on} $C$, then by re-scaling the matrix $\varphi$
and permuting the coordinates, we can assume that $(1;0;0)\notin C.$
The equation is then $C:
X^d+X^{d-2}L_{2,X}+X^{d-3}L_{3,X}+...+XL_{d-1,X}+L_{d,X}=0,$ since
$L_{1,X}$ is not invariant by $\varphi$ {because} $ab\neq0$.
Moreover, $Z^d$ and $Y^d$ are not in $L_{d,X},$ by the assumption
that only $(1;0;0)\notin C.$ Assume first that $Y^{d-1}Z$ and
$YZ^{d-1}$ are in $L_{d,X}.$ Then $(d-1)a+b{\equiv}0\,(mod\,m)$ and
$a+(d-1)b{\equiv}0\,(mod\,m)$. In particular, {$m|d(d-2)$ and for
each such type $m, (a,b)$, the equation is reduced to $
X^d+\big(\sum_{j=2}^{d-1}\,\,X^{d-j}\sum_{i=0}^{j}\beta_{j,i}Y^iZ^{j-i}\big)
+\big(Y^{d-1}Z+\alpha
YZ^{d-1}+\sum_{i=2}^{d-2}\beta_{d,i}Y^iZ^{d-i}\big)=0. $ It is
straightforward to see that if $i\notin S(2)^{j,X}_{m,(a,b)}$ (resp.
$i\notin S_2^{d,X}\,\,{m,(a,b)}$) then $\beta_{j,i}=0$ (resp.
$\beta_{di}=0$). This proves $(4.1)$.} {Secondly,} assume that
$Y^{d-1}Z\in L_{d,X}$ and $YZ^{d-1}\notin L_{d,X}.$ Then, by the
non-singularity, $Z^{d-1}$ is in $L_{d-1,X}$. That is
$(d-1)a+b\equiv0\, (mod\,m)$ and $(d-1)b\equiv0\, (mod\,m).$
Therefore {$m|(d-1)^2$ and we have the form
\[
X^d+\alpha
XZ^{d-1}+Y^{d-1}Z+\sum_{j=2}^{d-2}\,\,\sum_{i=0}^{j}\beta_{j,i}X^{d-j}Y^iZ^{j-i}+\sum_{i=1}^{d-1}\beta_{(d-1),i}XY^iZ^{d-1-i}+
\sum_{i=2}^{d-2}\beta_{d,i}Y^iZ^{d-i}=0.
\]
%\begin{eqnarray*}
%X^d&+&\sum_{j=2}^{d-2}\,\,X^{d-j}\big(\sum_{i=0}^{j}\beta_{ji}Y^iZ^{j-i}\big)+X\big(\alpha Z^{d-1}+\sum_{i=1}^{d-1}\beta_{(d-1)i}Y^iZ^{d-1-i}\big)+\\
%&+&\big(Y^{d-1}Z+\sum_{i=2}^{d-2}\beta_{di}Y^iZ^{d-i}\big)=0.
%\end{eqnarray*}
Consequently, by checking the monomials' invariance, we conclude
that if $i\notin S(2)^{j,X}_{m,(a,b)}$ then $\beta_{j,i}=0$, if
$i\notin S_1^{d-1,X}\,\,{m,(a,b)}$ then $\beta_{(d-1),i}=0$, if
$i\notin S_2^{d,X}\,\,{m,(a,b)}$ then $\beta_{d,i}=0$ and the result
follows for $(4.2)$.} Up to a permutation of $Y$ and $Z$, {it
remains to consider the case for which} $Y^{d-1}Z$ and $YZ^{d-1}$
are not in $L_{d,X}.$ By the non-singularity, $Z^{d-1}$ and
$Y^{d-1}$ should be in $L_{d-1,X}$ consequently,
$(d-1)b\equiv0\,(mod\,m)$ and $(d-1)a\equiv0\,(mod\,m).$ {Therefore,
$m|(d-1)$ and the form is reduced to
\[
X^d+XZ^{d-1}+\alpha
XY^{d-1}+\sum_{j=2}^{d-2}\,\,\sum_{i=0}^{j}\beta_{j,i}X^{d-j}Y^iZ^{j-i}+\sum_{i=2}^{d-2}\beta_{d,i}Y^iZ^{d-i}+
\sum_{i=1}^{d-2}\beta_{(d-1),i}XY^iZ^{d-1-i}=0,
\]
and the equation $(4.3)$ is now clear by the fact that
$\beta_{j,i}=0$ whenever $m\nmid ai+(j-i)b$.}
\item If one reference points lie in the $C$, then by normalizing the matrix $\varphi$ and
permuting the coordinates, we can assume that $(1;0;0),\,(0;1;0)\notin C.$ We then write
\[C: X^d+Y^d+X^{d-2}L_{2,X}+X^{d-3}L_{3,X}+...+XL_{d-1,X}+L_{d,X}=0,\]
such that $Z^d\notin L_{d,X}.$ Also, by the non-singularity, we have
$Z^{d-1}\in L_{d-1,X}.$ {In particular,} $da\equiv0\,(mod\,m)$ and
$(d-1)b\equiv0\,(mod\,m)$ and $m|\,d(d-1)$. {The above equation
become
\[
X^d+Y^d+\alpha
XZ^{d-1}+\sum_{j=2}^{d-2}\sum_{i=0}^{j}\beta_{j,i}X^{d-j}Y^iZ^{j-i}+\sum_{i=1}^{d-1}\beta_{d,i}Y^iZ^{d-i}+\sum_{i=1}^{d-1}\beta_{(d-1),i}XY^iZ^{d-1-i}=0
\]
Following the same line of argument as before, we conclude $(5)$.}

\item If none of the reference points lie in $C$ then $C: X^d+Y^d+Z^d+\big(\sum_{j=2}^{d-1}X^{d-j}L_{j,X}\big)+L_{d,X}=0,$
where $L_{1,X}$ does not appear since $ab\neq0$ {and $L_{1,X}$ is
not invariant under $\varphi$.} Clearly $da\equiv db\equiv
0\,(mod\,m)$ {and therefore} $m|d$. Moreover
\[
C:\,\,X^d+Y^d+Z^d+\sum_{j=2}^{d-1}\sum_{i\in
S(2)^{j,X}_{m,(a,b)}}\beta_{j,i}X^{d-j}Y^iZ^{j-i} +\sum_{i\in
S_1^{d,X}\,\,{m,\,(a,b)}}\beta_{d,i}Y^iZ^{d-i}=0.
\]
This completes the proof of our result.
\end{itemize}
\vspace{-.7cm}
\end{proof}

\begin{cor}\label{cor5}
Let $H$ be a non-trivial cyclic subgroup of $Aut(\delta)$ where
$\delta\in M_g^{Pl}$ with $d\geq 4$. Then {the order of $H$ divides
one of the integers}
$d-1,\,\,d,\,\,d^2-3d+3,\,\,(d-1)^2,\,\,d(d-2),\,\,d(d-1)$.
{Consequently automorphisms of $\delta$ have orders} $\leq d(d-1)$.
\end{cor}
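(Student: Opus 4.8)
The plan is to derive Corollary~\ref{cor5} as an immediate consequence of Theorem~\ref{thm20}. Theorem~\ref{thm20} already performs the heavy lifting: it enumerates \emph{all} possible types $m,(a,b)$ for which a non-singular plane curve of degree $d$ can carry a cyclic automorphism group of order $m$, organized into the six cases (1)--(6) according to how many reference points lie on the curve. In each of these cases the statement records a divisibility constraint on $m$. So the proof is essentially a bookkeeping step: I would simply read off, case by case, the integer that $m$ must divide, and collect them.

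\medskip

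\noindent\textbf{First}, I would invoke the normalization from the start of \S2: any non-trivial cyclic $H \le Aut(\delta)$ of order $m$ can be represented, after a $K$-change of coordinates, by a diagonal generator $\mathrm{diag}(1,\xi_m^a,\xi_m^b)$, so that $\delta \in \rho_{a,b,m}(M_g^{Pl}(\Z/m))$ for some $(a,b)$. By Theorem~\ref{thm20} this type must fall into exactly one of the six enumerated cases. \textbf{Then} I would list the divisibility conclusion attached to each case: case (1) gives $m \mid d-1$; case (2) gives $m \mid d$; case (3) (all reference points on $\delta$) gives $m \mid d^2-3d+3$; the subcases of (4) (two reference points on $\delta$) give $m \mid d(d-2)$, $m \mid (d-1)^2$, and $m \mid d-1$ respectively; case (5) (one reference point) gives $m \mid d(d-1)$; and case (6) (no reference point) gives $m \mid d$. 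Since $d-1 \mid (d-1)^2$ and $d \mid d(d-1)$ and $d \mid d(d-2)$, every possibility is subsumed under one of the six listed integers
$$d-1,\quad d,\quad d^2-3d+3,\quad (d-1)^2,\quad d(d-2),\quad d(d-1).$$

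\medskip

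\noindent\textbf{Finally}, to obtain the numerical bound $|H| = m \le d(d-1)$, I would argue that $d(d-1)$ dominates the other five candidate integers for every $d \ge 4$. Indeed $d(d-1) \ge d(d-2)$, $d(d-1) \ge d$, $d(d-1) \ge d-1$; the comparison $d(d-1) \ge (d-1)^2$ reduces to $d \ge d-1$; and $d(d-1) \ge d^2-3d+3$ reduces to $2d \ge 3$, which holds. Hence $m$, dividing one of these integers, is at most $\max$ of them, namely $d(d-1)$, and so every automorphism of $\delta$ (being an element of some cyclic subgroup) has order at most $d(d-1)$.

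\medskip

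\noindent\textbf{I do not expect any genuine obstacle here}, since the corollary is purely a repackaging of Theorem~\ref{thm20}; the only point requiring a moment of care is verifying that the three ``smaller'' divisors appearing in cases (1), (2), (4.2), (4.3), (6) are all divisors of the six named integers, and confirming the elementary inequalities $d^2-3d+3 \le d(d-1)$ and $(d-1)^2 \le d(d-1)$ so that $d(d-1)$ is indeed the maximal order. Both are trivial for $d \ge 4$.
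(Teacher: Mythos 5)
Your proposal is correct and matches the paper's intended argument exactly: the paper states Corollary~\ref{cor5} without a separate proof, treating it precisely as you do --- as a case-by-case reading of the divisibility conditions $m \mid d-1,\, d,\, d^2-3d+3,\, d(d-2),\, (d-1)^2,\, d(d-1)$ from the six cases of Theorem~\ref{thm20}, followed by the elementary observation that $d(d-1)$ is the largest of these integers for $d \ge 4$. Your bookkeeping of the cases (including the redundant divisors in (1), (2), (4.3), (6)) and the inequalities $d^2-3d+3 \le d(d-1)$ and $(d-1)^2 \le d(d-1)$ is accurate, so nothing is missing.
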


%%%%%%%%%%%%%%%%%%%%%%%%%%%%%%%%%%%%%%%%%%%%%%%%%%%%%%%%%%%%%%%%%%%%%%%%%
%%%%%%%%%%%%%%%%%%%%%%%%%%%%%%%%%%%%%%%%%%%%%%%%%%%%%%%%%%%%%%%%%%%%%%%%%
%%%%%%%%%%%%%%%%%%%%%%%%%%%%%%%%%%%%%%%%%%%%%%%%%%%%%%%%%%%%%%%%%%%%%%%%

\section{Characterization of curves {$\delta\in M_g^{Pl}$ whose} $Aut(\delta)$ has {``very large'' elements}}

We study {here} non-singular plane curves $\delta\in M_g^{Pl}$ that
admits a $\sigma\in Aut(\delta)$ of ``very large'' or ``large''
order: $d^2-3d+3,\,\,(d-1)^2,\,\,d(d-2),$ $\,d(d-1)$, $\ell(d-1)$ or
$\ell d$ with $\ell\geq 2$. In particular we are interested in
{investigating} the full automorphism group {and the corresponding
non-singular plane} equations (up to $K$-isomorphism) of such
curves.

Before a detailed study of the automorphism groups for such
$\delta$'s, we recall the following general results concerning
$Aut(\delta)$ for $\delta\in M_g^{Pl}$ which will be useful
throughout this paper. In some cases we will use the notation of the
GAP library for {small finite} groups to indicate some {of them}.

\par Because linear systems $g^2_d$ are
unique (up to multiplication by $P\in PGL_3(K)$ in $\mathbb{P}^2(K)$
\cite[Lemma 11.28]{Book}), we always {consider a non-singular plane
model $C$ of $\delta$}, which is given by a projective plane
equation $F(X;Y;Z)=0$ and $Aut(C)$ is a finite subgroup of
$PGL_3(K)$ that fixes the equation $F$ and is isomorphic to
$Aut(\delta)$. Any other plane model of $\delta$ is given by
${C_P}:\,F(P(X;Y;Z))=0$ with $Aut({C_P})={P^{-1}Aut(C)P}$ for some
$P\in PGL_3(K)$ and ${C_P}$ is $K$-equivalent or $K$-isomorphic to
$C$. By an abuse of notation, we also denote a non-singular
projective plane curve of degree $d$ by $C$. Therefore, $Aut(C)$
satisfies one of the following situations (see Mitchel \cite{Mit}
for more details):
\begin{enumerate}
\item fixes a point $Q$ and a line $L$ with $Q\notin L$ in
$PGL_3(K)$,
\item fixes a triangle {(i.e. a set of three non-concurrent lines)},
\item $Aut(C)$ is conjugate to a representation inside $PGL_3(K)$ of
one of the finite primitive group namely, the Klein group
$PSL(2,7)$, the icosahedral group $A_5$, the alternating group
$A_6$, the Hessian group $Hess_{216}$ or to one of its subgroups $Hess_{72}$ or $Hess_{36}$.
\end{enumerate}

It is classically known that if a subgroup $H$ of automorphisms of a
non-singular plane curve $C$ fixes a point on $C$ then $H$ is cyclic
\cite[Lemma 11.44]{Book}, and recently Harui in \cite[\S2]{Harui}
provided the lacked result in the literature on the type of groups
that could appear for non-singular plane curves. Before introducing
the statement of Harui, we need to define the terminology of being a
descendent of a plane curve. For a non-zero monomial $cX^iY^jZ^k$
with $c\in K\setminus\{0\}$ we define its exponent as
$max\{i,j,k\}$. For a homogenous polynomial $F$, the core of $F$ is
defined to be the sum of all terms of $F$ with the greatest
exponent. Let $C_0$ be a smooth plane curve, a pair $(C,H)$ with
$H\leq Aut(C)$ is said to be a descendant of $C_0$ if $C$ is defined
by a homogenous polynomial whose core is a defining polynomial of
$C_0$ and $H$ acts on $C_0$ under a suitable change of the
coordinate system.

\begin{thm}[Harui] \label{teoHarui} If
$H\preceq\,\,Aut(C)$ where $C$ is a non-singular plane curve of
degree $d\geq4$ then $H$ satisfies one of the following.
\begin{enumerate}
  \item $H$ fixes a point on $C$ and then cyclic.
  \item $H$ fixes a point not lying on $C$ and satisfies a short exact sequence of the form
  $1\rightarrow N\rightarrow H\rightarrow G'\rightarrow 1,$
where $N$ a cyclic group of order dividing $d$ and $G'$ (which is a
subgroup of $PGL_2(K)$) is conjugate to a cyclic group $\Z/m\Z$ of
order $m$ with $m\leq d-1$, a Dihedral group $D_{2m}$ of order $2m$
where $|N|=1$ or $m|(d-2)$, the alternating groups $A_4$, $A_5$ or
the symmetry group $S_4$.
\item $H$ is conjugate to a subgroup of $Aut(F_d)$ where $F_d$ is the Fermat curve $X^d+Y^d+Z^d$.
In particular, $|H|\,|\,6d^2$ and $(C,H)$ is a descendant of $F_d$.
\item $H$ is conjugate to a subgroup of $Aut(K_d)$ where $K_d$ is the Klein curve curve $XY^{d-1}+YZ^{d-1}+ZX^{d-1}$
hence $|H|\,|\,3(d^2-3d+3)$ and $(C,H)$ is a descendant of $K_d$.
\item $H$ is conjugate to a finite primitive subgroup of $PGL_3(K)$ {that} are mentioned above.

\end{enumerate}
\end{thm}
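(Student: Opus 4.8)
The plan is to deduce the statement from the classification of finite subgroups of $PGL_3(K)$ together with a geometric analysis of how $H$ can act on the smooth model $C$. First I would invoke Mitchell's trichotomy (recalled in the three itemized situations preceding Theorem \ref{teoHarui} and due to \cite{Mit}): a finite subgroup $H\leq PGL_3(K)$ either (a) fixes a point $Q$ and a line $L$ with $Q\notin L$, or (b) fixes a triangle, or (c) is conjugate to one of the primitive groups $PSL(2,7)$, $A_5$, $A_6$, $Hess_{216}$, $Hess_{72}$ or $Hess_{36}$. Case (c) is exactly conclusion $(5)$, so nothing remains there, and the whole content lies in refining (a) and (b) using that $H$ preserves a smooth curve of degree $d\geq 4$.

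For case (a) I would distinguish according to whether $Q$ lies on $C$. If $Q\in C$, the classical fact \cite[Lemma 11.44]{Book} that a group of automorphisms fixing a point of $C$ is cyclic gives conclusion $(1)$ at once. If $Q\notin C$, choose coordinates so that $Q=(0:0:1)$ and $L=\{Z=0\}$, and restrict the action to $L\cong\mathbb{P}^1$. This yields a homomorphism $H\to PGL_2(K)$ whose kernel $N$ consists of the homologies with center $Q$ and axis $L$, namely the elements $diag(1,1,\zeta)$, which form a cyclic group. The key point is the bound $|N|\,|\,d$: since $Q\notin C$, projection away from $Q$ exhibits $C$ as a degree-$d$ cover of $\mathbb{P}^1$; a nontrivial element of $N$ fixes every line $\ell$ through $Q$ and acts on it as a scaling fixing only $Q$ and $\ell\cap L$, hence acts freely on the $d$ points $C\cap\ell$ of a general $\ell$, forcing $|N|$ to divide $d$. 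The image $G'=H/N\hookrightarrow PGL_2(K)$ is finite, hence cyclic, dihedral, $A_4$, $S_4$ or $A_5$ by the classification of finite subgroups of $PGL_2(K)$; this is conclusion $(2)$. The refined restrictions ($m\leq d-1$ in the cyclic case and $m\,|\,(d-2)$ in the dihedral case when $|N|>1$) I would extract from a fixed-point count on $L$: the fixed points of a generator of $G'$ correspond to lines through $Q$ whose intersection with $C$ is constrained by the stabilizer order, and reading off the eigenvalue data on the defining form $F=\sum_k Z^kG_k(X,Y)$ pins down the divisibility conditions.

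For case (b) I would let $H$ permute the three vertices of the invariant triangle, giving a homomorphism $H\to S_3$ whose kernel $H_0$ fixes all three vertices and is therefore diagonal in the coordinate system where the vertices are the reference points. Thus $H_0$ acts on each monomial $X^iY^jZ^k$ (with $i+j+k=d$) by a character, and the terms of $F$ of greatest exponent, i.e. the core of $F$ in the sense defined before the theorem, must be simultaneous eigenvectors. Analyzing which diagonal groups can leave invariant a core defining a non-degenerate plane curve, together with the $S_3$-action permuting $X,Y,Z$, should force the core to be projectively equivalent to the Fermat polynomial $X^d+Y^d+Z^d$ or to the Klein polynomial $XY^{d-1}+YZ^{d-1}+ZX^{d-1}$; in either case $(C,H)$ is by definition a descendant of $F_d$ or $K_d$, and $H$ embeds into the corresponding automorphism group, giving $|H|\,|\,6d^2$ and conclusion $(3)$, or $|H|\,|\,3(d^2-3d+3)$ and conclusion $(4)$. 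The orders $6d^2=|Aut(F_d)|$ and $3(d^2-3d+3)=|Aut(K_d)|$ follow from the known structures $(\mathbb{Z}/d)^2\rtimes S_3$ and $\mathbb{Z}/(d^2-3d+3)\rtimes\mathbb{Z}/3$ of these groups.

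The main obstacle will be the triangle case: one must rule out every diagonal group whose invariant core is neither Fermat nor Klein, which requires a careful combinatorial study of the lattice of characters on the degree-$d$ monomials and an appeal to the non-degeneracy of $C$ to guarantee that enough top-exponent terms survive to recover one of the two model cores. A secondary difficulty is the sharp size and divisibility constraints in case (2), where the crude bound $|N|\,|\,d$ must be supplemented by localizing the $G'$-action at its fixed points on $L$ and translating tangency conditions of lines through $Q$ into the stated congruences.
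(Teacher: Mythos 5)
First, a point of reference: the paper does not prove this statement at all — it is imported verbatim from Harui \cite{Harui}, so there is no internal proof to compare against. Measured against Harui's actual argument, your outline reconstructs the correct strategy: Mitchell's classification \cite{Mit} into intransitive/imprimitive/primitive subgroups, the cyclicity lemma \cite[Lemma 11.44]{Book} when the fixed point lies on $C$, the projection-from-$Q$ argument for $Q\notin C$, the classification of finite subgroups of $PGL_2(K)$, and the core/descendant analysis in the triangle case. Your free-orbit count for $|N|\,\big|\,d$ is sound as written: every nontrivial power of a generator of $N$ fixes only $Q$ and $\ell\cap L$ on a general line $\ell$ through $Q$, so $\langle\sigma\rangle$ acts freely on the $d$ points of $C\cap\ell$.

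Two genuine gaps remain. First, in your case (b) the intermediate claim that the invariant core must be Fermat or Klein is false as stated, because Mitchell's three situations are not mutually exclusive: a diagonal group fixes the coordinate triangle while also fixing each vertex and each edge. For instance $H=\langle diag(1,\xi_{(d-1)^2},\xi_{(d-1)^2}^{(d-1)(d-2)})\rangle$ preserves $X^d+Y^{d-1}Z+XZ^{d-1}=0$ and fixes the coordinate triangle, yet the core is $X^d$, neither Fermat nor Klein; such $H$ must be routed into conclusions (1)--(2). You therefore need the reduction: if the image of $H\to S_3$ is trivial or of order $2$, then $H$ fixes a vertex and an opposite line and falls back into your case (a); only when the image contains a $3$-cycle does smoothness force, at each vertex, a top term $X^d$, $X^{d-1}Y$ or $X^{d-1}Z$ permuted cyclically, whence the Fermat or Klein core. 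Relatedly, the divisibility $|H|\,\big|\,3(d^2-3d+3)$ in (4) must be extracted from the triangle-stabilizer inside $Aut(K_d)$, not from $|Aut(K_d)|$ itself: for $d=4$ one has $Aut(K_4)\cong PSL_2(\mathbb{F}_7)$ of order $168$ (cf. Remark \ref{rem18}), so ``$H$ embeds in $Aut(K_d)$'' alone does not yield $|H|\,\big|\,21$ — whereas your own core-invariance observation does, once you note $H$ lies in the subgroup of generalized permutations preserving the Klein core. Second, the sharp constraints in (2) — $m\leq d-1$ for cyclic $G'$, and $m\,|\,(d-2)$ for dihedral $G'$ when $|N|>1$ — are the technical heart of Harui's \S 2, and in your write-up they are only a declared intention (``pins down the divisibility conditions''): you would need to actually carry out the eigenvalue analysis on $F=\sum_k Z^kG_k(X,Y)$ and the lifted commutation relation between the dihedral involution and the rotation, which is precisely where the nontrivial work lies. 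As it stands, those steps are a plan rather than a proof.
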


{Now assume, as usual, that $C$ is} a non-singular plane model of
degree $d\geq 4$ with $\sigma\in Aut(C)$ of exact order $m$ that
acts on $F(X;Y;Z)=0$ {as} $(x,y,z)\mapsto (x,\xi_m^a y,\xi_m^bz)$.
{In the next sections, mainly in the proofs, we recall the abuse of
notation of refereing to $C$ as a non-singular plane curve (up to
$K$-isomorphism) instead of being a non-singular plane model of some
$\delta\in M_g^{Pl}$. }
%\begin{rem}
%The result of Harui is stated over the complex field $\mathbb{C}$ but it is to be noted that in positive characteristic $p$ if the order of $Aut(C)$ is coprime with $p$ then all the techniques of Hauri \cite{Harui} can be applied: Hurwitz
%bound \cite{Hurwitz}, Oiakawa and Arakawa inequalities \cite{Arak, Oiakawa} and so on. In particular, the arguments of
%the previous result.
%
%\end{rem}
%
%\begin{lem}
%If a group $G$ of automorphisms of a plane curve $C$ fixes a point
%on $C$, then it is cyclic.\footnote{{\bf F:} Is it true the
%converse? Need to think on it in the next situations.}
%\end{lem}

%\subsubsection{Curves with automorphismhes of orders $d(d-1),\,d(d-2),\,(d-1)^2$ or $d^2-3d+3$}
\subsection{{The locus $M_g^{Pl}(\Z/(d(d-1)))$}.}
\mbox{} \\
The following result appears in Harui \cite[\S3]{Harui}.
\begin{prop}[Harui]\label{prop111}
{For any $d\geq 5$,} $\delta\in\widetilde{M_g^{Pl}(\Z/(d(d-1)))}$ if
and only if { $\delta$} has a plane model given by
$C:X^d+Y^d+XZ^{d-1}=0$.
\end{prop}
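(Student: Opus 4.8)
The plan is to prove both implications using the classification machinery of Theorem \ref{thm20} together with Harui's structural result (Theorem \ref{teoHarui}). For the ``if'' direction, I would start with the explicit model $C:X^d+Y^d+XZ^{d-1}=0$ and exhibit the automorphism $\sigma=\mathrm{diag}(1,\xi_{d(d-1)}^{d-1},\xi_{d(d-1)}^{d})$ of order $d(d-1)$: one checks directly that $\sigma$ fixes $F$, since the three monomials $X^d$, $Y^d$, $XZ^{d-1}$ are each scaled by $\xi_{d(d-1)}^{d(d-1)}=1$. This puts $\delta$ in $M_g^{Pl}(\mathbb{Z}/d(d-1))$. The substantive part of this direction is to verify that $Aut(\delta)$ is \emph{exactly} $\langle\sigma\rangle\cong\mathbb{Z}/d(d-1)$ and not larger, so that $\delta\in\widetilde{M_g^{Pl}(\mathbb{Z}/d(d-1))}$. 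Here I would invoke Corollary \ref{cor5}, which caps the order of any automorphism at $d(d-1)$, and then argue that the full group $Aut(\delta)$ must itself fix the unique point or line structure determined by $\sigma$; since $\sigma$ already has the maximal possible order, any group containing it and fixing the same configuration must coincide with $\langle\sigma\rangle$.

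For the ``only if'' direction, I would begin with $\delta\in\widetilde{M_g^{Pl}(\mathbb{Z}/d(d-1))}$ and analyze the type $m,(a,b)$ with $m=d(d-1)$ furnished by Theorem \ref{thm20}. Since $d(d-1)>d^2-3d+3,(d-1)^2,d(d-2)$ for $d\geq 5$, the only case of the classification that permits order $d(d-1)$ is case $(5)$, where exactly one reference point lies on $C$ and the congruences $da\equiv 0\,(\mathrm{mod}\,m)$ and $(d-1)b\equiv 0\,(\mathrm{mod}\,m)$ hold with $(a,b)\in\Gamma_m$. Solving these congruences with $m=d(d-1)$ and $\gcd(a,b)=1$ forces $a\equiv d-1$ and $b\equiv d$ up to the allowed normalizations. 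I would then feed this $(a,b)$ back into the normal form of case $(5)$ and compute the index sets $S(2)^{j,X}_{m,(a,b)}$, $S_1^{d,X}$, and $S_1^{d-1,X}$: the claim is that for $m=d(d-1)$ these sets are so restrictive that the only surviving monomials are $X^d$, $Y^d$, and $XZ^{d-1}$, collapsing the normal form to precisely $X^d+Y^d+\alpha XZ^{d-1}$, which rescales to the stated model.

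The main obstacle I anticipate is the congruence bookkeeping in the ``only if'' direction: showing that \emph{every} auxiliary parameter $\beta_{j,i}$ must vanish requires checking that no exponent pair $(i,j-i)$ with $2\le j\le d-1$ other than those giving the three listed monomials satisfies $ai+(j-i)b\equiv 0$ or the shifted congruence modulo $d(d-1)$. This is where the ``largeness'' of $m$ does the real work—the residues $a=d-1$ and $b=d$ are coprime to enough of $m$ that the linear forms $ai+(j-i)b$ cannot vanish modulo $m$ for small $j$ except trivially—but it must be carried out carefully, and the restriction $d\ge 5$ (as opposed to $d=4$) is exactly what guarantees that $d(d-1)$ strictly dominates the other four ``very large'' integers so that case $(5)$ is the \emph{only} possibility. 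The equality $Aut(\delta)=\langle\sigma\rangle$ in the first direction is comparatively routine once one observes that a strictly larger group would contain an automorphism of order exceeding $d(d-1)$ or force a second generator incompatible with the rigidity of the model, contradicting Corollary \ref{cor5}.
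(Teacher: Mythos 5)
Your ``only if'' direction is essentially sound and reproduces what the paper actually proves (in Proposition \ref{prop11}): since $d(d-1)$ is strictly larger than $d-1$, $d$, $d^2-3d+3$, $(d-1)^2$ and $d(d-2)$, it divides none of them, so Theorem \ref{thm20} forces case $(5)$; the congruences $d(d-1)\mid da$ and $d(d-1)\mid (d-1)b$ give $a=(d-1)k$, $b=dk'$, a power of $\mathrm{diag}(1,\xi_{d(d-1)}^{d-1},\xi_{d(d-1)}^{d})$ realizes any such pair, and the sets $S(2)^{j,X}$, $S_1^{d,X}$, $S_1^{d-1,X}$ are all empty, collapsing the normal form to $X^d+Y^d+\alpha XZ^{d-1}$. (Note the paper itself does not reprove Proposition \ref{prop111}; it quotes it from Harui and proves the companion moduli statement.)

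The ``if'' direction, however, has a genuine gap. Corollary \ref{cor5} bounds the \emph{order of elements} (cyclic subgroups) of $Aut(\delta)$, not the order of the group: a non-cyclic group can strictly contain $\langle\sigma\rangle$ while having no element of order exceeding $d(d-1)$, so ``$\sigma$ has maximal possible order'' does not force $Aut(\delta)=\langle\sigma\rangle$, and your appeal to ``the rigidity of the model'' is not an argument. The decisive evidence that your reasoning is incomplete is $d=4$: nothing in your ``if'' direction uses $d\geq 5$, yet by Remark \ref{rem3.2} the curve $X^4+Y^4+XZ^3$ has automorphism group $\Z/4\circledcirc A_4$ of order $48$, which contains the cyclic group of order $12$ properly --- so your argument as written would prove a false statement. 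What is actually needed is a proof that $Aut(\delta)$ is \emph{cyclic}, and the paper's toolkit supplies two routes: either observe that $\sigma^{d}=\mathrm{diag}(1,1,\xi_{d(d-1)}^{d})$ is a homology of period $d-1\geq 4$, so by \cite[Lemma 3.7]{Harui} the curve has an inner Galois point, which is unique by \cite{Yoshihara} and hence fixed by all of $Aut(\delta)$, whence $Aut(\delta)$ is cyclic by \cite[Lemma 11.44]{Book} (this is exactly Remark \ref{rem33} and the mechanism of Proposition \ref{prop30}); or run the case analysis of Theorem \ref{teoHarui} as in Proposition \ref{prop12}, ruling out Fermat and Klein descendants (element orders at most $2d$ resp.\ $d^2-3d+3$, both $<d(d-1)$ for $d\geq 5$), the primitive groups, and the exterior case via the exact sequence $1\to N\to Aut\to G'\to 1$. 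Only after cyclicity is established does Corollary \ref{cor5} finish the job, and the restriction $d\geq 5$ enters precisely in these exclusions, not merely in the comparison of the six integers.
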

{Moreover we prove the following:}
%
%In what follows, we prove a more general result.

\begin{prop}\label{prop11}
{For $d\geq 4$,} $\delta\in M_g^{Pl}(\Z/d(d-1))$ if and only if
{$\delta$} has a non-singular plane model {that is} $K$-equivalent
to $C:\,X^d+Y^d+\alpha XZ^{d-1}=0$ where $\alpha\neq0$ (always we
can assume $\alpha=1$ by a $K$-isomorphic model to $C$).
Consequently, $M_g^{Pl}(\Z/d(d-1)\Z)$ is an irreducible locus with
one element. Furthermore, for $d\geq 5$,
$$\widetilde{M_g^{Pl}(\Z/d(d-1))}=M_g^{Pl}(\Z/d(d-1))=\rho(M_g^{Pl}(\Z/d(d-1)))$$ where
$\rho(\Z/d(d-1)\Z)=<diag(1,\xi_{d(d-1)}^{d-1},\xi_{d(d-1)}^d)>$.
\end{prop}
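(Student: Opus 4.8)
The plan is to leverage the classification already established in Theorem \ref{thm20}. The group $\Z/d(d-1)$ is cyclic of order $m = d(d-1)$, which is the largest value permitted by Corollary \ref{cor5}, so the constraints from Theorem \ref{thm20} should pin down the type and the equation almost uniquely. First I would observe that $m = d(d-1)$ divides only the integer $d(d-1)$ in the list $d-1, d, d^2-3d+3, (d-1)^2, d(d-2), d(d-1)$ (a short numerical check: $d(d-1)$ exceeds $d^2-3d+3$, $(d-1)^2$, and $d(d-2)$ for $d\geq 4$, and does not divide them, while it is clearly larger than $d$ and $d-1$). Hence the only case of Theorem \ref{thm20} that can apply is case $(5)$, the ``one reference point'' situation with $m \mid d(d-1)$, and we must have $m = d(d-1)$ exactly. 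This forces the congruences $da \equiv 0 \pmod m$ and $(d-1)b \equiv 0 \pmod m$ together with $(a,b)\in\Gamma_m$.

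Next I would solve these congruences. From $da\equiv 0 \pmod{d(d-1)}$ we get $(d-1)\mid a$, and from $(d-1)b\equiv 0\pmod{d(d-1)}$ we get $d\mid b$; writing $a = (d-1)a'$ and $b = db'$ with the coprimality condition $\gcd(a,b)=1$ and $1\le a\ne b\le m-1$, one checks that the generator must be $(a,b)=(d-1,d)$ up to taking a power that still generates, i.e. $\rho(\Z/d(d-1)) = \langle \mathrm{diag}(1,\xi_{d(d-1)}^{d-1},\xi_{d(d-1)}^{d})\rangle$ as claimed. I would then specialize the normal form of case $(5)$ to this type: the sets $S(2)^{j,X}_{m,(a,b)}$, $S_1^{d,X}\,m,(a,b)$ and $S_1^{d-1,X}\,m,(a,b)$ must be computed for $(a,b)=(d-1,d)$ and $m=d(d-1)$. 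The claim is that all the middle and boundary index sets are empty except the ones producing $Y^d$, $X^d$ and $XZ^{d-1}$, so that the only surviving monomials are $X^d$, $Y^d$ and $\alpha XZ^{d-1}$; non-singularity forces $\alpha\ne 0$. This is the computational heart of the argument and I expect it to be the main obstacle: one must verify that for every intermediate $j$ no exponent vector $(d-j,i,j-i)$ satisfies the relevant congruence mod $d(d-1)$ except the three listed, which amounts to showing the linear form $ai+(j-i)b$ (and its boundary analogues) avoids the required residues for all admissible $i,j$. Care is needed because a single stray solution would add a parameter and break irreducibility.

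Having obtained the model $C: X^d+Y^d+\alpha XZ^{d-1}=0$, I would rescale $Z$ to absorb $\alpha$, confirming that the locus $M_g^{Pl}(\Z/d(d-1))$ consists of a single $K$-isomorphism class, hence is irreducible with one element; this also yields the equality $M_g^{Pl}(\Z/d(d-1))=\rho(M_g^{Pl}(\Z/d(d-1)))$ since only one type $\rho$ arises. The remaining equality $\widetilde{M_g^{Pl}(\Z/d(d-1))}=M_g^{Pl}(\Z/d(d-1))$ for $d\geq 5$ is exactly the content of Harui's Proposition \ref{prop111}: that result identifies the curve with full automorphism group $\Z/d(d-1)$ as the one with model $X^d+Y^d+XZ^{d-1}=0$, which coincides with our normal form after rescaling. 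So for $d\ge 5$ the three loci collapse, and I would simply cite Proposition \ref{prop111} to upgrade ``$\Z/d(d-1)\le Aut(\delta)$'' to ``$\Z/d(d-1)\cong Aut(\delta)$,'' there being no larger group available by the order bound of Corollary \ref{cor5} in any case. The case $d=4$ is excluded from the full-automorphism statement (deferred to Remark \ref{rem3.2}) because there the single curve may have extra automorphisms, so I would restrict the final chain of equalities to $d\geq 5$ as stated.
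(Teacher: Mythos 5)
Your proposal follows the paper's proof essentially step for step: reduce via the divisibility constraints of Theorem \ref{thm20} to type $d(d-1),\,(d-1,d)$ of the form $(5)$, check that the index sets $S(2)^{j,X}$, $S_1^{d,X}$ and $S_1^{d-1,X}$ are empty (the computations you defer are routine and do go through, e.g. $S(2)^{j,X}$ reduces to $d(d-1)\mid dj-i$ with $0<dj-i<d(d-1)$), rescale to absorb $\alpha$, and invoke Harui's Proposition \ref{prop111} for the chain of equalities when $d\geq 5$ --- exactly the paper's route. One caution on a non-load-bearing aside: Corollary \ref{cor5} bounds the orders of \emph{elements} of $Aut(\delta)$, not $|Aut(\delta)|$ itself, so it does not by itself exclude a larger full group (indeed for $d=4$ the curve has $|Aut|=48>12$); the correct justification is Proposition \ref{prop111}, which you do cite as the primary argument.
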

\begin{rem}\label{rem3.2} Recall that for $d=4${,} the automorphism group of $\,\,X^4+Y^4+\alpha
XZ^{3}=0$ is {isomorphic to} $\Z/4\circledcirc A_4$, the element of
$Ext^1(A_4,\Z/4)$\footnote{{We use the notation $Ext^1(A,B)$ to
represent the group $G$ (up to isomorphism) for which there is an
exact sequence of groups of the form} $1\rightarrow B\rightarrow
G\rightarrow A\rightarrow 1$.} {which is} given by $\{(\delta,g)\in
\mu_{12}\times H:\delta^4=\chi(g)\}/{\pm 1},$ where $\mu_n$ is the
group of n-th roots of unity, $H$ is the group $A_4$ and {$\chi$ is
the character $\chi:H\rightarrow \mu_3$ defined by $\chi(S)=1$ and
$\chi(T)=\rho$ where $S,T$ are generators of $H$ of order 2 and 3
respectively with the representation $H=<S,T|S^2=1,T^3=1,...>$ and
$\rho$ is a 3rd-primitive root of unity}, see \cite{He} (or also
\cite{Bars}).
\end{rem}
\begin{proof}
{If $\delta$ has a non-singular plane model which is isomorphic to
$C:\,X^d+Y^d+\alpha XZ^{d-1}=0$ then $\delta\in M_g^{Pl}(\Z/d(d-1))$
because $[X;\zeta_{d(d-1)}^{d-1}Y;\zeta_{d(d-1)}^{d}Z]$ is an
element of $Aut(C)$ of order $d(d-1).$ Conversely, suppose that
$\delta\in M_g^{Pl}(\Z/d(d-1))$ and fix as usual, by an abuse of
notation, a plane non-singular model $C$ (up to $K$-isomorphism) of
$\delta$. Since $d(d-1)$ does not divide any of the integers
$d-1,\,\,d,\,\,d^2-3d+3,\,\,d(d-2),\,\,(d-1)^2$ then by Theorem
\ref{thm20}, $C$ is projectively equivalent to type
$d(d-1),\,\,(a,b)$ of the form $(5)$ for some
$(a,b)\in\Gamma_{d(d-1)}$ such that $da\equiv0\,\,mod\,d(d-1)$ and
$(d-1)b\equiv0\,\,mod\,d(d-1)$. In particular $a=(d-1)k$ and $b=dk'$
for some integers $k$ and $k'$ and since we have
$[X;\zeta_{d(d-1)}^{d-1}Y;\zeta_{d(d-1)}^{d}Z]^{d(k'-k)+k}=[X;\zeta_{d(d-1)}^{(d-1)k}Y;\zeta_{d(d-1)}^{dk'}Z]
$ then $m,\,(a,b)$ with $m=d(d-1),\,a=d-1$ and $b=d$ is a generator
of such types. Hence

}
\begin{eqnarray*}
S(2)^{j,X}_{m,(a,b)}&:=&\{i:\,\,0\leq i\leq j\,\,\text{and}\,\,(d-1)i+(j-i)d=0\,mod\,\,d(d-1)\}\\
&=&\{i:\,\,0\leq i\leq j\,\,\text{and}\,\,d(d-1)\,\,|(dj-i)\}\\
&=&\emptyset\,\,\forall j=2,...,d-2\,\,{(\text{because}\,\, 0<dj-i<d(d-1))},\\
\end{eqnarray*}
Also
%\begin{eqnarray*}
%\,X^d&+&Y^d+\sum_{j=2}^{d-2}\,\,\big(X^{d-j}\sum_{i\in S(2)^{j,X}_{m,(a,b)}}\beta_{ji}Y^iZ^{j-i}\big)+\sum_{i\in S_1^{d,X}\,\,{m,\,(a,b)}}\beta_{di}Y^iZ^{d-i}+\\
%&+&X\big(\alpha Z^{d-1}+\sum_{i\in S_1^{d-1,X}\,\,{m,\,(a,b)}}\beta_{(d-1)i}Y^iZ^{d-1-i}\big)=0
%\end{eqnarray*}
\begin{eqnarray*}
{S_1^{d,X}}&:=&\{i:\,\,1\leq i\leq d-1\,\,\text{and}\,\,(d-1)i+(d-i)d=0\,mod\,\,d(d-1)\}\\
&=&\{i:\,\,1\leq i\leq d-1\,\,\text{and}\,\,d(d-1)\,\,|d-i\}\\
&=&\emptyset\,\,{\text{because}\,\, 0<d-i<d(d-1)),}\\
{S_1^{d-1,X}}&:=&\{i:\,\,1\leq i\leq d-1\,\,\text{and}\,\,(d-1)i+(d-1-i)d=0\,mod\,\,d(d-1)\}\\
&=&\{i:\,\,1\leq i\leq d-1\,\,\text{and}\,\,d(d-1)\,\,|i\}\\
&=&\emptyset.
\end{eqnarray*}
{Therefore, by substituting in the form $(5)$ of Theorem
\ref{thm20}, $C$ is isomorphic to $X^d+Y^d+\alpha XZ^{d-1}$ where
$\alpha\neq0$. The last part is an immediate consequence of
Proposition \ref{prop111}.}
\end{proof}

\subsection{The moduli $M_g^{Pl}(\Z/(d-1)^2)$.}
\begin{prop}\label{prop12}  For any $d\geq 4$, if $\delta\in M_g^{Pl}$ has a non-singular plane model that is isomorphic to
$$C: \,\,X^d+Y^{d-1}Z+\alpha XZ^{d-1}=0$$ for some $\alpha\neq0$, then
$\delta\in\widetilde{M_g^{Pl}(\Z/(d-1)^2)}$.
\end{prop}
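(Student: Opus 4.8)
The plan is to show first that $\langle\sigma\rangle\cong\Z/(d-1)^2$ embeds in $Aut(C)$ and then that it exhausts $Aut(C)$. For the embedding I would verify directly that $\sigma:=diag(1,\xi_{(d-1)^2},\xi_{(d-1)^2}^{(d-1)(d-2)})$ preserves $F=X^d+Y^{d-1}Z+\alpha XZ^{d-1}$: writing $\xi:=\xi_{(d-1)^2}$, the monomial $Y^{d-1}Z$ is multiplied by $\xi^{(d-1)+(d-1)(d-2)}=\xi^{(d-1)^2}=1$ and $XZ^{d-1}$ by $\xi^{(d-1)(d-2)(d-1)}=1$, while $X^d$ is fixed. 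The three eigenvalues $1,\xi,\xi^{(d-1)(d-2)}$ are pairwise distinct for $d\geq 4$ (equality of two of them would force $(d-1)\mid(d-2)$ or $(d-1)^2\mid d$), so $\sigma$ has order exactly $(d-1)^2$ in $PGL_3(K)$ and its only fixed points in $\mathbb{P}^2$ are the reference points $P_1,P_2,P_3$. This already gives $\delta\in M_g^{Pl}(\Z/(d-1)^2)$. I would also record that a diagonal matrix $diag(1,s,t)$ preserves $F$ exactly when $t^{d-1}=1$ and $s^{d-1}t=1$, so the diagonal automorphisms of $C$ number $(d-1)^2$ and hence form precisely $\langle\sigma\rangle$.

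For the equality $Aut(C)=\langle\sigma\rangle$ I would set $G:=Aut(C)$ and run Harui's Theorem \ref{teoHarui} on $G\supseteq\langle\sigma\rangle$, using that $(d-1)^2\mid|G|$. Cases (3) and (4) fall by divisibility: since $\gcd(d-1,d)=1$ we have $\gcd((d-1)^2,d^2)=1$, so $(d-1)^2\mid 6d^2$ would force $(d-1)^2\mid 6$; and $(d-1)^2\mid 3(d^2-3d+3)$ reduces to $(d-1)^2\mid 3(d-2)$, both impossible for $d\geq 4$. Case (5) is excluded because no finite primitive subgroup of $PGL_3(K)$ contains an element of order $(d-1)^2\geq 9$ (their element orders are uniformly bounded; only $d=4,7$ make $(d-1)^2$ divide a primitive order, and a direct inspection rules out elements of order $9$ and $36$). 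If $G$ is cyclic, then by Corollary \ref{cor5} its order divides one of the six listed integers while being a multiple of $(d-1)^2$, whence $|G|=(d-1)^2$ and $G=\langle\sigma\rangle$.

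It remains to exclude a non-cyclic $G$, and this is the heart of the argument. A non-cyclic $G$ cannot fix a point of $C$ (a group fixing a point on $C$ is cyclic), so with (3)--(5) gone it is of Harui type (2): it fixes a point off $C$, necessarily $P_1$, the only fixed point of $\sigma$ off $C$. In the sequence $1\to N\to G\to G'\to 1$ one has $|N|\mid d$, hence $\gcd(|N|,(d-1)^2)=1$ and the image $\bar\sigma\in G'$ still has order $(d-1)^2$; among Harui's possibilities for $G'$ only a dihedral group $D_{2m}$ can contain such an element, forcing $(d-1)^2\mid m$, and then the alternative $m\mid(d-2)$ is impossible, so $|N|=1$. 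Corollary \ref{cor5} applied to the rotation subgroup gives $m=(d-1)^2$, i.e. $G\cong D_{2(d-1)^2}$ with $\langle\sigma\rangle$ as its unique, hence normal, cyclic subgroup of order $(d-1)^2$. The main obstacle is to discard the reflections of this dihedral group: any reflection $\tau$ normalizes $\langle\sigma\rangle$, so it permutes the eigenpoints $P_1,P_2,P_3$ of $\sigma$; it fixes $P_1$ and therefore preserves $\{P_2,P_3\}$. But $P_2$ and $P_3$ are geometrically distinguishable on $C$, since the tangent at $P_2$ is $\{Z=0\}$, which meets $C$ only at $P_2$ (contact $d$, as $F|_{Z=0}=X^d$), whereas the tangent at $P_3$ is $\{X=0\}$ with contact $d-1$ (as $F|_{X=0}=Y^{d-1}Z$). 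As an automorphism preserves contact orders, $\tau$ fixes both $P_2$ and $P_3$, hence all three reference points; then $\tau$ is diagonal and lies in $\langle\sigma\rangle$, contradicting that it is a reflection. Thus $G$ has no reflection, contradicting $G\cong D_{2(d-1)^2}$, so $G$ is cyclic and equals $\langle\sigma\rangle$; this is precisely $\delta\in\widetilde{M_g^{Pl}(\Z/(d-1)^2)}$.
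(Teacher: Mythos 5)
Your proof is correct, and at the decisive step it takes a genuinely different (and more self-contained) route than the paper. The paper first observes that $\sigma^{d-1}$ is a homology of period $d-1\geq 4$ and invokes Mitchell's trichotomy (fixed point, fixed line, or fixed triangle), handles the triangle case via descendants of $F_d$ and $K_d$, and then dismisses the point-off-curve case of Theorem \ref{teoHarui} in one line: since $\gcd(|N|,(d-1)^2)=1$, it claims $(d-1)^2\mid |G'|$ is ``not possible for any of the groups $\Z/m, A_4, S_4, A_5$ or $D_{2m}$ with $m\leq d-1$'', concluding $Aut(C)$ fixes a point on $C$ and is cyclic; it also defers $d=4$ to Henn's classification. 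You instead run the full case list of Theorem \ref{teoHarui} directly and, crucially, you notice what the paper glosses over: as stated, Harui's theorem does not bound $m$ in the dihedral case when $|N|=1$, so $G'\cong D_{2m}$ with $(d-1)^2\mid m$ is not excluded by divisibility alone. You close this case honestly: Corollary \ref{cor5} applied to the rotation subgroup forces $m=(d-1)^2$, and then your tangent-contact argument --- the tangent at $P_2$ meets $C$ only at $P_2$ with multiplicity $d$, while the tangent at $P_3$ has contact $d-1$, so no automorphism can swap $P_2$ and $P_3$ --- shows a reflection would fix all three reference points, hence be diagonal, hence lie in $\langle\sigma\rangle$ by your count of the $(d-1)^2$ diagonal automorphisms, a contradiction. (An equivalent algebraic alternative: a swap conjugates $\sigma$ to $diag(1,\xi^{(d-1)(d-2)},\xi)$, and equating this with $\sigma^{-1}$ forces $(d-1)^2\mid d^2-3d+3$, i.e.\ $(d-1)^2\mid d-2$, which is false.) What your approach buys: it patches the terseness of the paper's dismissal of the dihedral subcase, it avoids Mitchell's theorem entirely, and it treats $d=4$ uniformly (your primitive-group inspection correctly notes that only $d=4,7$ need checking and that $Hess_{216}$, $A_6$, etc.\ have no elements of order $9$ or $36$) rather than citing Henn. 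What the paper's route buys is brevity, at the cost of leaning on an unstated bound for $m$ in the dihedral case of its own formulation of Harui's theorem.
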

\begin{proof}
{The result is is well-known for $d=4$ (see \cite{He} or\cite{Bars}
for more details), so we assume that $d\geq5$. We have
$[X;\zeta_{d-1}Y;Z]\in Aut(C)$ which is a homology of order $d-1\geq
4$ hence $Aut(C)$ should fix a point, a line or a triangle ( see $\S
5$ of Mitchell \cite{Mit}). Since
$[X;\zeta_{(d-1)^2}Y;\zeta_{(d-1)^2}^{(d-1)(d-2)}Z]\in Aut(C)$ is of
order $(d-1)^2$ then also $(d-1)^2\,\,|\,\,|Aut(C)|$. Now assume
that} $Aut(C)$ fixes a triangle and neither a line nor a point is
fixed {by $Aut(C)$} then it follows by the proof of Theorem
\ref{teoHarui} (see \cite[\S 4]{Harui}), that $C$ is either a
descendent of the Fermat curve $F_d$ or the Klein curve $K_d$. But,
none of {these curves admits automorphisms} of order $(d-1)^2$,
{since elements of $Aut(F_d)$ (resp.\,\,$Aut(K_d)$) have orders at
most $2d$ (resp. $d^2-3d+3$). Secondly, if $Aut(C)$ fixes a point
not lying on $C$ then we can think about $Aut(C)$ in a short exact
sequence $1\rightarrow N\rightarrow Aut(C)\rightarrow G'\rightarrow
1$ as in Theorem \ref{teoHarui} $(2)$.} Since $|N|$ and $(d-1)^2$
are coprime, then {$(d-1)^2||G'|$ which is not possible for any of
the groups $\Z/m, A_4, S_4, A_5$ or $D_{2m}$ with $m\leq d-1$.
Consequently, $Aut(C)$ fixes a point on $C$ and hence it is cyclic
of order divisible by $(d-1)^2$ and $\leq d(d-1)$. That is, $Aut(C)$
is cyclic of order $(d-1)^2$. In particular
$\delta\in\widetilde{M_g^{Pl}(\Z/(d-1)^2)}$.} \vspace{-.3cm}
\end{proof}

{The following is an analogue of Proposition \ref{prop11}:}

\begin{prop}\label{prop13} {For $d\geq 4$,} $\delta\in M_g^{Pl}(\Z/(d-1)^2)$
if and only if $\delta$ has {a non-singular plane model which is
isomorphic to} $C:\,\,X^d+Y^{d-1}Z+\alpha XZ^{d-1}=0$ {with
$\alpha\neq0$. Therefore $M_g^{Pl}(\Z/(d-1)^2\Z)$ is an irreducible
locus with one element and
$\widetilde{M_g^{Pl}(\Z/d(d-1))}=M_g^{Pl}(\Z/d(d-1))=\rho(M_g^{Pl}(\Z/d(d-1)))$
where $\rho(\Z/d(d-1)\Z)$ is
$<diag(1,\xi_{(d-1)^2},\xi_{(d-1)^2}^{(d-1)(d-2)})>$. Furthermore,}
if $G$ is a non-cyclic automorphism group of {a non-singular plane
curve and $(d-1)^2\,|\,|G|$} then $G$ does not contain any element
of such order.
\end{prop}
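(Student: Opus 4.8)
The plan is to prove the equivalence in two directions, then deduce the structural consequences (irreducibility, the $\widetilde{M}$-equalities) and the final ``furthermore'' claim. I will lean on Corollary~\ref{cor5}, Theorem~\ref{thm20} and Proposition~\ref{prop12} throughout.

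The backward implication ($\Leftarrow$) is immediate: if $\delta$ has a non-singular plane model $K$-isomorphic to $C:X^d+Y^{d-1}Z+\alpha XZ^{d-1}=0$ with $\alpha\neq 0$, then Proposition~\ref{prop12} already gives $\delta\in\widetilde{M_g^{Pl}(\Z/(d-1)^2)}\subseteq M_g^{Pl}(\Z/(d-1)^2)$. For the forward implication ($\Rightarrow$), I would first observe that $(d-1)^2$ divides none of $d-1,\,d,\,d^2-3d+3,\,d(d-2),\,d(d-1)$ for $d\geq 4$: reducing modulo $d-1$ kills the first, third and fourth cases, while $d(d-2)\equiv -1$ and $d(d-1)\equiv d-1$ modulo $(d-1)^2$. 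Hence by Corollary~\ref{cor5} and Theorem~\ref{thm20}, the only normal form available to a curve carrying $\Z/(d-1)^2$ is case $(4.2)$, with $m=(d-1)^2$. Next I would fix the canonical generator $(a,b)=(1,(d-1)(d-2))$, note $b\equiv -(d-1)\pmod{(d-1)^2}$, and check that it satisfies the defining congruences $(d-1)a+b\equiv 0$ and $(d-1)b\equiv 0$ modulo $(d-1)^2$.

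The key reduction step, modelled on the proof of Proposition~\ref{prop11}, is to show that every admissible pair for case $(4.2)$ is a power of this generator, so that all such types are mutually $K$-isomorphic by the footnote to Theorem~\ref{thm20}. From $(d-1)b\equiv 0$ one gets $b=(d-1)t$; from $(d-1)a+b\equiv 0$ one gets $a\equiv -t\pmod{d-1}$; writing $a=-t+(d-1)s$ then yields $b\equiv -a(d-1)\pmod{(d-1)^2}$, so $(a,b)\equiv a\cdot(1,-(d-1))$, the $a$-th power of the canonical generator. For that generator I would substitute $b\equiv -(d-1)$ into the three index sets of the normal form $(4.2)$ and show each is empty: $S(2)^{j,X}_{m,(a,b)}$ requires $i-(j-i)(d-1)\equiv 0\pmod{(d-1)^2}$ with $0\leq i\leq j\leq d-2$, and since the absolute value of the left side is strictly below $(d-1)^2$ it must vanish, forcing $i=(j-i)(d-1)$, impossible for $j\geq 2$; likewise $S_1^{d-1,X}_{m,(a,b)}$ reduces to $id\equiv 0\pmod{(d-1)^2}$ and $S_2^{d,X}_{m,(a,b)}$ to $d(i-d+1)\equiv 0\pmod{(d-1)^2}$, both of which fail on their index ranges because $\gcd(d,(d-1)^2)=1$. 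Every summation term then drops out and $(4.2)$ collapses to $X^d+Y^{d-1}Z+\alpha XZ^{d-1}=0$ with $\alpha\neq 0$. I expect this vanishing verification, together with the bookkeeping that the admissible pairs form a single cyclic-generated family, to be the main (though entirely elementary) obstacle.

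For the remaining assertions: the equation $X^d+Y^{d-1}Z+\alpha XZ^{d-1}=0$ admits a diagonal rescaling $(X,Y,Z)\mapsto(\lambda X,\mu Y,\nu Z)$ that normalizes $\alpha$ to $1$, so up to $K$-isomorphism there is a unique such curve; thus $M_g^{Pl}(\Z/(d-1)^2)$ is irreducible and consists of a single point. Combining the equivalence with Proposition~\ref{prop12}, every element of $M_g^{Pl}(\Z/(d-1)^2)$ in fact has full automorphism group $\Z/(d-1)^2$, which yields $\widetilde{M_g^{Pl}(\Z/(d-1)^2)}=M_g^{Pl}(\Z/(d-1)^2)=\rho(M_g^{Pl}(\Z/(d-1)^2))$ with $\rho(\Z/(d-1)^2)=\langle diag(1,\xi_{(d-1)^2},\xi_{(d-1)^2}^{(d-1)(d-2)})\rangle$. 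Finally, for the ``furthermore'' claim I would argue by contradiction: if a non-cyclic $G=Aut(C')$ with $(d-1)^2\mid |G|$ contained an element of order $(d-1)^2$, then $C'$ would lie in $M_g^{Pl}(\Z/(d-1)^2)$, whence by the equivalence and Proposition~\ref{prop12} its full automorphism group would be the cyclic group $\Z/(d-1)^2$, contradicting non-cyclicity.
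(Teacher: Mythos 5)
Your proposal is correct and follows essentially the same route as the paper: the backward direction via Proposition~\ref{prop12}, the forward direction by using Corollary~\ref{cor5} to force type $(d-1)^2,(a,b)$ of form $(4.2)$ in Theorem~\ref{thm20}, reduction of all admissible pairs to the single generator $(1,(d-1)(d-2))\equiv(1,-(d-1))$, and the verification that $S(2)^{j,X}$, $S_1^{d-1,X}$ and $S_2^{d,X}$ are empty (your congruence computations match the paper's up to the cosmetic substitution $b\equiv-(d-1)$ and a slightly different parametrization of the solutions). The deduction of the one-point locus, the $\widetilde{M}$-equalities, and the final non-cyclicity claim from Proposition~\ref{prop12} is exactly what the paper intends by calling these ``immediate consequences.''
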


\begin{proof}
{We only need to show that $\delta\in M_g^{Pl}(\Z/(d-1)^2)$ only if
$\delta$ has a non-singular plane model that is isomorphic to
$C:\,\,X^d+Y^{d-1}Z+\alpha XZ^{d-1}=0$ with $\alpha\neq0$, since the
remaining parts are immediate consequences of Proposition
\ref{prop12}. Up to projective equivalence, we consider a model $C$
of $\delta$ in $\rho(M_g^{Pl}(\Z/d(d-1)))$ and since $(d-1)^2\nmid
d-1,\,\,d,\,\,d^2-3d+3,\,\,d(d-2),\,\,d(d-1)$ then $C$ is isomorphic
to type $(d-1)^2, (a,b)$ of the form $(4.2)$ of Theorem \ref{thm20}.
In particular $(a,b)\in\Gamma_{(d-1)^2}$ such that
$(d-1)a+b\equiv0\, mod\,(d-1)^2,$ $(d-1)b\equiv0\, mod\,(d-1)^2$ and
$a=(d-1)k-k',\,b=(d-1)k'$ for some integers $k$ and $k'$. But we
have
$$[X;\zeta_{(d-1)^2}Y;\zeta_{(d-1)^2}^{(d-1)(d-2)}Z]^{(d-1)k-k'}=[X;\zeta_{(d-1)^2}^{(d-1)k-k'}Y;\zeta_{(d-1)^2}^{(d-1)k'}Z].$$
That is $m=(d-1)^2, a=1$ and $b=(d-1)(d-2)$ is a generator of such
types. Moreover }

%\begin{eqnarray*}
%X^d&+&\sum_{j=2}^{d-2}\,\,X^{d-j}\big(\sum_{i\in S(2)^{j,X}_{(d-1)^2,(a,b)}}\beta_{ji}Y^iZ^{j-i}\big)+X\big(\alpha Z^{d-1}+\sum_{i\in S_1^{d-1,X}\,\,{(d-1)^2,(a,b)}}\beta_{(d-1)i}Y^iZ^{d-1-i}\big)+\\
%&+&\big(Y^{d-1}Z+\sum_{i\in S_2^{d,X}\,\,{(d-1)^2,(a,b)}}\beta_{di}Y^iZ^{d-i}\big)=0
%\end{eqnarray*}
\begin{eqnarray*}
{S(2)^{j,X}}&:=&\{i:\,\,0\leq i\leq j\,\,\text{and}\,\,i+(j-i)(d-1)(d-2)=0\,mod\,\,(d-1)^2\}\\
&=&\{i:\,\,0\leq i\leq j\,\,\text{and}\,\,(d-1)^2\,\,|j(d-1)-di\}\\
&=&\emptyset\,\,\,\,\forall j=2,...,d-2.
\end{eqnarray*}
{The last equality follows because $(d-1)^2\,\,|j(d-1)-di$ implies
that $d-1|i$ thus $i=0$. But then we must have $(d-1)^2\,\,|j(d-1)$
which is impossible since $0<j<d-1$. Also, we have}

\begin{eqnarray*}
{S_2^{d,X}}&:=&\{i:\,\,2\leq i\leq d-2\,\,\text{and}\,\,i+(d-i)(d-1)(d-2)=0\,mod\,\,(d-1)^2\}\\
&\subseteq&\{i:\,\,2\leq i\leq d-2\,\,\text{and}\,\,d-1\,\,|i\}\\
&=&\emptyset,\\
{S_1^{d-1,X}}&:=&\{i:\,\,1\leq i\leq d-1\,\,\text{and}\,\,i+(d-1-i)(d-1)(d-2)=0\,mod\,\,(d-1)^2\}\\
&=&\{i:\,\,1\leq i\leq d-1\,\,\text{and}\,\,(d-1)^2\,\,|di\}\\
&=&\emptyset.
\end{eqnarray*}
{Substituting into equation $(4.2)$ yields that $C$ is isomorphic to
the equation $X^d+Y^{d-1}Z+\alpha XZ^{d-1}=0$ and we are done.}
\end{proof}

\subsection{The moduli $M_g^{Pl}(\Z/d(d-2))$.}

\par{Assume that $\delta\in M_g^{Pl}$ has} a non-singular plane model {isomorphic to the curve}
$C:\,\,X^d+Y^{d-1}Z+\alpha YZ^{d-1}=0$ of degree $d\geq4$. {The full
automorphism group of $\delta$ is given by the following result:}
\begin{prop}\label{prop15}
{Consider $\delta\in M_g^{Pl}$ with the above property. Therefore}
$Aut(\delta)$ is is the central extension
$<\sigma,\,\tau|\,\,\,\sigma^2=\tau^{d(d-2)}=1\,\,\,and\,\,\,\sigma\tau\sigma=\tau^{-(d-1)}>$
of $D_{2(d-2)}$ by $\mathbb{Z}/d$ {whenever $d\neq4,6$.} In
particular {$Aut(\delta)$ is of order} $2d(d-2)$. {For $d=6$,} it is
a central extension of $S_4$ by $\mathbb{Z}/6$ {thus} $|Aut(C)|=144$
{and for} $d=4$, $\delta$ is isomorphic to the Fermat quartic curve
$F_4$ {hence} $Aut(\delta)\simeq (\mathbb{Z}/4)^2\rtimes S_3$.

\end{prop}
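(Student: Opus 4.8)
The plan is to build the asserted group from two explicit automorphisms and then to squeeze $Aut(\delta)$ from above with Harui's classification (Theorem \ref{teoHarui}). After rescaling $Z$ I may assume $\alpha=1$, so $C\colon X^d+Y^{d-1}Z+YZ^{d-1}=0$, i.e. $X^d+YZ(Y^{d-2}+Z^{d-2})=0$; note $P_1=(1:0:0)\notin C$ while $P_2,P_3\in C$. Writing $\xi:=\xi_{d(d-2)}$ I set $\tau:=diag(1,\xi,\xi^{-(d-1)})$ and $\sigma:=[X;Z;Y]$. Checking monomials (and using $(d-1)^2-1=d(d-2)$, so $\xi^{1-(d-1)^2}=1$) shows $\tau,\sigma\in Aut(C)$, with $\tau$ of order $d(d-2)$ and $\sigma^2=1$; interchanging the last two diagonal entries gives $\sigma\tau\sigma=diag(1,\xi^{-(d-1)},\xi)$, which equals $\tau^{-(d-1)}$ because $(d-1)^2\equiv 1\pmod{d(d-2)}$. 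As $\sigma\notin\langle\tau\rangle$ normalizes $\langle\tau\rangle$, the group $H_d:=\langle\sigma,\tau\rangle$ is $\langle\tau\rangle\rtimes\langle\sigma\rangle$ of order $2d(d-2)$ with exactly the stated presentation. For the central-extension shape one computes $\tau^{d-2}=diag(1,\xi^{d-2},\xi^{d-2})$, a homology of order $d$ centered at $P_1$; it is central in $H_d$ and $H_d/\langle\tau^{d-2}\rangle\cong D_{2(d-2)}$.

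For the reverse inclusion, $Aut(\delta)\supseteq H_d$ is non-abelian, so Theorem \ref{teoHarui}(1) cannot occur. Since $|\langle\tau\rangle|=d(d-2)$ does not divide $|Aut(K_d)|=3(d^2-3d+3)$ for any $d\geq 4$, and exceeds the maximal order $2d$ of an element of $Aut(F_d)$ once $d\geq 5$ (a nontrivial coordinate permutation is either a transposition, contributing a factor $2$, or a $3$-cycle, in which case the cube is a scalar matrix), cases (4) and (3) are impossible for $d\geq 5$; the finitely many primitive groups of (5) are excluded because $d(d-2)$ either does not divide their orders or exceeds the maximal element order $5$ of $A_5$ and $A_6$. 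Hence for every $d\geq 5$, $Aut(\delta)$ is of type (2): it fixes a point $Q\notin C$ and sits in $1\to N\to Aut(\delta)\to G'\to 1$ with $N$ cyclic, $|N|\mid d$. Because $Q$ is fixed by $\tau$ it is a reference point, and as $P_2,P_3\in C$ we get $Q=P_1$; the order-$d$ homology $\tau^{d-2}$ lies in $N$, forcing $N=\langle\tau^{d-2}\rangle\cong\Z/d$ and $G'=Aut(\delta)/N\supseteq D_{2(d-2)}$.

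It remains to identify $G'\leq PGL_2(K)$, which acts on the line $\{X=0\}$ stabilising the $d$ intersection points $C\cap\{X=0\}=\{P_2,P_3\}\cup\{(Y/Z)^{d-2}=-1\}$. By Theorem \ref{teoHarui}(2) together with $D_{2(d-2)}\subseteq G'$, either $G'=D_{2(d-2)}$ or $G'\in\{A_4,S_4,A_5\}$. The image $\bar\tau$ has order $d-2$, so $G'=A_5$ would force $d\leq 7$; but a finite $A_5$-stable subset of $\mathbb{P}^1$ is a union of orbits of size $\geq 12$, so $d\geq 12$, a contradiction. Similarly $G'=S_4$ forces $d\leq 6$ while an $S_4$-stable set has at least $6$ points, leaving only $d=6$, where the six points are the vertices of an octahedron; and $G'=A_4$ is ruled out by the same orbit count for every $d\geq 5$. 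Consequently $G'=D_{2(d-2)}$ for all $d\geq 5$ with $d\neq 6$, whence $Aut(\delta)=H_d$ has order $2d(d-2)$, as claimed.

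The two genuine exceptions are handled separately, and pinning them down is where the real work lies. For $d=6$ the octahedral symmetry of the six points supplies the extra generators of $G'=S_4$ (an order-$3$ and an order-$4$ element of $PGL_2(K)$); lifting them enlarges $Aut(\delta)$ to the central extension of $S_4$ by $\Z/6=\langle\tau^{4}\rangle$, of order $144$. For $d=4$ a linear change of the variables $Y,Z$ turns the harmonic quartic $YZ(Y^2+Z^2)$ into $Y^4+Z^4$, so $C$ is $K$-isomorphic to the Fermat quartic $F_4$, whose automorphism group $(\Z/4)^2\rtimes S_3$ is classical (Henn \cite{He}). I expect the delicate step throughout to be the exact determination of $G'$: the orbit argument cleanly isolates $d=6$, but one still has to exhibit the octahedral automorphisms at $d=6$ and to confirm that no further symmetry survives in low degree.
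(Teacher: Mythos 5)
Your proof is correct in substance, but it takes a genuinely different route from the paper's. The paper's proof of Proposition \ref{prop15} is essentially a citation: after normalizing $\alpha=1$ via the substitution $[X;\mu Y;\mu^{-(d-1)}Z]$ (note your ``rescaling $Z$'' alone would disturb the coefficient of $Y^{d-1}Z$, so this two-variable diagonal change is what is really needed), it invokes Harui \cite{Harui}, \S 6, as a black box for all three cases and merely records the generators $\sigma=[X;Z;Y]$ and $\tau=[X;\zeta_{d(d-2)}Y;\zeta_{d(d-2)}^{-(d-1)}Z]$. You instead reconstruct the classification from Theorem \ref{teoHarui}: you build $H_d$ explicitly and verify the presentation (your computations with $(d-1)^2\equiv 1 \pmod{d(d-2)}$ are all correct), rule out cases (1), (3), (4), locate the fixed point $Q=P_1$ using the fact that $\tau$ has three distinct eigenvalues so its only fixed points are the reference points, identify $N=\langle\tau^{d-2}\rangle\cong\Z/d$, and then pin down $G'\leq PGL_2(K)$ by letting it act on the $d$-point set $C\cap\{X=0\}$ and comparing with the minimal orbit sizes of $A_4$, $S_4$, $A_5$ on $\mathbb{P}^1$ (namely $4$, $6$, $12$). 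This orbit-counting argument is the most attractive part: it cleanly explains why $d=6$ is exceptional, since the six points $\{P_2,P_3\}\cup\{(Y/Z)^{d-2}=-1\}$ form an octahedral configuration precisely then. What your route buys is a self-contained proof where the paper delegates everything; what the paper's route buys is brevity and no risk in the delicate low-degree verifications.

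Two steps need tightening. First, your exclusion of the primitive groups of case (5), as literally stated (``$d(d-2)$ either does not divide their orders or exceeds the maximal element order $5$ of $A_5$ and $A_6$''), leaks at $d=6$: here $d(d-2)=24$ divides $|PSL(2,7)|=168$, $|Hess_{216}|=216$ and $|Hess_{72}|=72$, so divisibility does not dispose of them and your element-order clause covers only $A_5$, $A_6$. The fix is easy: either use the element-order bound for all primitive subgroups of $PGL_3(K)$ (none contains an element of order $>7$ --- the very fact this paper uses later in the proof of Proposition \ref{P3}), since $\tau$ has order $24$; or apply Lagrange with $|H_6|=48$, which divides none of $168$, $216$, $72$, $36$, $360$. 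Second, at $d=6$ the lower bound $G'=S_4$ --- that every octahedral rotation actually lifts to an automorphism of the sextic, not merely of the six-point set --- is asserted rather than proved; it is true because $YZ(Y^4+Z^4)$ is, up to a M\"obius change of $(Y,Z)$, the degree-$6$ relative invariant of the binary octahedral group, so each element of $S_4$ lifts after correcting $X$ by a scalar $\lambda$ with $\lambda^6$ equal to the multiplier. You flag this honestly, and since the paper itself offloads exactly this point (and the $d=4$ case, where your harmonic-quadruple reduction to $F_4$ and the appeal to Henn \cite{He} match the paper) to the literature, your argument is at worst at the same level of rigor there and strictly more informative everywhere else.
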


\begin{proof}
Let $\mu\in K$ such that $\mu^{d(d-2)}=\alpha$, then $C$ is
projectively equivalent, through the transformation $[X;\mu
Y;\mu^{-(d-1)}Z]$, to the curve $C':\,X^d+Y^{d-1}Z+YZ^{d-1}=0$ {and
hence it follows, by $\S 6$ of} Harui \cite{Harui}, that $Aut(C')$
is {isomorphic to} $\mathbb{Z}_4^2\rtimes S_3$ (for $d=4$), a
central extension of $S_4$ by $\mathbb{Z}/d$ (for $d=6$) and a
central extension of $D_{2(d-2)}$ by $\mathbb{Z}/d$ ($d\neq4,6$).
{Finally, it is to be noted that $\sigma:=[X;Z;Y]$ and
$\tau:=[X;\zeta_{d(d-2)}Y;\zeta_{d(d-2)}^{-(d-1)}Z]$ generate
$Aut(C')$ for $d\neq4,6$ which completes the proof.}
\end{proof}
{Similarly to} Propositions \ref{prop11} and \ref{prop13}, {we
prove:}
\begin{prop}\label{prop17}
{A curve $\delta$ of $d\geq 4$ belongs to} $M_g^{Pl}(\Z/d(d-2))$ if
and only if it {has plane models that are} isomorphic to
$C:\,X^d+Y^{d-1}Z+YZ^{d-1}=0$. {Hence} $M_g^{Pl}(\Z/d(d-2))$ is
irreducible and {consists of} a single element. {Furthermore
$\widetilde{M_g^{Pl}(H_d)}=M_g^{Pl}(\Z/d(d-1))=\rho(M_g^{Pl}(\Z/d(d-1)))$}
where $H_d$ is the concrete central extension of $D_{2(d-2)}$ by
$\mathbb{Z}/d$ ($d\neq4,6$), a central extension of $S_4$ by
$\mathbb{Z}/d$ ($d=6$) or $\simeq (\mathbb{Z}/4)^2\rtimes S_3$
($d=4$){, which are} detailed in Proposition \ref{prop15}{, and
$\rho(\Z/d(d-2)\Z)=<diag(1,\xi_{d(d-2)},\xi_{d(d-2)}^{-(d-1)})>$.}
\end{prop}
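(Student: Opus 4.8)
The plan is to follow the pattern already set by Propositions \ref{prop11} and \ref{prop13}. One implication is immediate: if $\delta$ has a model $K$-isomorphic to $C:\,X^d+Y^{d-1}Z+YZ^{d-1}=0$, then $diag(1,\xi_{d(d-2)},\xi_{d(d-2)}^{-(d-1)})$ fixes the defining polynomial, since $Y^{d-1}Z$ is visibly invariant while $YZ^{d-1}\mapsto \xi_{d(d-2)}^{1-(d-1)^2}YZ^{d-1}$ with $1-(d-1)^2=-d(d-2)\equiv 0$. This diagonal matrix has order $d(d-2)$ in $PGL_3(K)$, so $\delta\in M_g^{Pl}(\Z/d(d-2))$, and this also explains the stated generator of $\rho(\Z/d(d-2)\Z)$.

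For the converse I would fix, up to $K$-equivalence, a non-singular plane model $C$ realizing the cyclic action of order $m=d(d-2)$. First I would record the divisibility dichotomy: for $d\geq 4$ the integer $d(d-2)$ divides none of $d-1,\,d,\,d^2-3d+3,\,(d-1)^2,\,d(d-1)$ (for instance $(d-1)^2=d(d-2)+1$ and $d(d-1)=d(d-2)+d$ with $0<d<d(d-2)$, while $d(d-2)>d^2-3d+3$). Hence by Theorem \ref{thm20} the only applicable shape is (4.1), as the two-reference-point subcases (4.2) and (4.3) require $m\mid (d-1)^2$ and $m\mid d-1$ respectively. So $C$ is projectively equivalent to a type $d(d-2),(a,b)$ of the form (4.1) with $(a,b)\in\Gamma_{d(d-2)}$, $(d-1)a+b\equiv 0$ and $a+(d-1)b\equiv 0\pmod{d(d-2)}$. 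The second congruence is redundant: once $b\equiv -(d-1)a$ it reads $a(1-(d-1)^2)=-d(d-2)a\equiv 0$. Therefore every admissible generator is a power of the standard one $(1,-(d-1))$, and after replacing $\sigma$ by a suitable power I may assume $a=1$, $b\equiv -(d-1)\pmod{d(d-2)}$, exactly as in Propositions \ref{prop11} and \ref{prop13}.

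The heart of the argument is then to show that the index sets in (4.1) are empty for this generator, so that the normal form collapses to $X^d+Y^{d-1}Z+\alpha YZ^{d-1}$. For $i\in S(2)^{j,X}_{m,(a,b)}$ one computes $ai+(j-i)b=id-j(d-1)=d(i-j)+j$, which for $0\leq i\leq j\leq d-1$ lies in the interval $[-(d(d-2)+1),\,d-1]$; the only multiples of $d(d-2)$ there are $0$ and $-d(d-2)$, and both force $d\mid j$ with $2\leq j\leq d-1$, which is impossible. Likewise for $i\in S_2^{d,X}$ one gets $ai+(d-i)b=d(i-d+1)$, so the condition becomes $i\equiv 1\pmod{d-2}$, which has no solution in $2\leq i\leq d-2$. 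Thus $C\cong X^d+Y^{d-1}Z+\alpha YZ^{d-1}$, and the change of variables $[X;\mu Y;\mu^{-(d-1)}Z]$ with $\mu^{d(d-2)}=\alpha$ normalizes $\alpha=1$. This proves the equivalence of the two conditions and shows $M_g^{Pl}(\Z/d(d-2))$ is a single $K$-isomorphism class, i.e. irreducible with one element.

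Finally, Proposition \ref{prop15} identifies $Aut(\delta)=H_d$ for this curve; since every member of the locus is $K$-isomorphic to $C$, each carries the full group $H_d$, whence $\widetilde{M_g^{Pl}(H_d)}=M_g^{Pl}(\Z/d(d-2))=\rho(M_g^{Pl}(\Z/d(d-2)))$ with $\rho$ as stated. I expect the main obstacle to be the two emptiness computations, where one must pin down the exact ranges of $ai+(j-i)b$ and $ai+(d-i)b$ modulo $d(d-2)$ and rule out every candidate multiple; by contrast the divisibility dichotomy and the reduction to the standard generator are routine once the analogous steps in Propositions \ref{prop11} and \ref{prop13} are in hand.
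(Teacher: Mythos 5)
Your proposal is correct and follows essentially the same route as the paper: exclude all cases of Theorem \ref{thm20} except $(4.1)$ via the divisibility of $d(d-2)$, reduce to the generator of type $d(d-2),(1,-(d-1))$ (the paper's $b=d(d-3)+1$ is the same residue mod $d(d-2)$), verify $S(2)^{j,X}=S_2^{d,X}=\emptyset$ by the same congruence computations, and invoke Proposition \ref{prop15} for the normalization of $\alpha$ and the identification of $Aut(\delta)=H_d$. Your explicit check of the ``if'' direction and your observation that the congruence $a+(d-1)b\equiv 0$ is redundant are minor streamlinings, not a different method.
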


\begin{proof}
It suffices to prove the {``only if'' statement since otherwise is
straightforward} by Proposition \ref{prop15}. {We have by Theorem
\ref{thm20} and because $d(d-2)\nmid
d-1,\,d,\,d^2-3d+3,\,(d-1)^2,\,d(d-1)$ that any plane model of
$\delta$ is isomorphic to type $d(d-2), (a,b)$ of the form $(4.1)$
of Theorem \ref{thm20}. That is $(a,b)\in\Gamma_{d(d-2)}$ such that
$(d-1)a+b\equiv0\,\,mod\,d(d-2)$ and
$a+(d-1)b\equiv0\,\,mod\,d(d-2).$ In particular, $a=k$ and $b=dk'+k$
for some integers $k$ and $k'$ such that $k$ and $dk'+k$ are coprime
and $d-2|k+k'$. Consequently we can take a generator $k=1$ and
$k'=d-3$, since
$[X;\zeta_{d(d-2)}Y;\zeta_{d(d-2)}^{d(d-3)+1}Z]^k=[X;\zeta_{d(d-2)}^kY;\zeta_{d(d-2)}^{dk'+k}Z]$.
Therefore}
%\[
%X^d+\big(\sum_{j=2}^{d-1}\,\,X^{d-j}\sum_{i\in S(2)^{j,X}_{m,(a,b)}}\beta_{ji}Y^iZ^{j-i}\big)
%+\big(Y^{d-1}Z+\alpha YZ^{d-1}+\sum_{i\in S_2^{d,X}\,\,{m,(a,b)}}\beta_{di}Y^iZ^{d-i}\big)=0,
%\]
\begin{eqnarray*}
{S(2)^{j,X}}&:=&\{i:\,\,0\leq i\leq j\,\,\text{and}\,\,i+(j-i)\big(d(d-3)+1\big)=0\,mod\,\,d(d-2)\}\\
&=&\{i:\,\,0\leq i\leq j\,\,\text{and}\,\,d(d-2)\,\,|j(d-1)-di\}\\
&=&\emptyset\,\,\,\,\forall j=2,...,d-2\,\,{(\text{because if}\,\,d(d-2)\,\,|j(d-1)-di\,\, \text{then}\,\, d|j\,\, \text{a contradiction})}\\
{S_2^{d,X}}&:=&\{i:\,\,2\leq i\leq d-2\,\,\text{and}\,\,i+(d-i)\big(d(d-3)+1\big)=0\,mod\,\,d(d-2)\}\\
&\subseteq&\{i:\,\,2\leq i\leq d-2\,\,\text{and}\,\,d-2\,\,|d-1-i\}\\
&=&\emptyset.
\end{eqnarray*}
{Hence} $C$ is isomorphic to $X^d+Y^{d-1}Z+\alpha YZ^{d-1}$ with
$\alpha\neq0$.
\end{proof}

\subsection{The moduli $M_g^{Pl}(\Z/(d^2-3d+3)$.}

The next result is well-known in the literature, see for example
\cite[\S 3]{Harui}.
\begin{prop} \label{prop16} {If} $\delta\in M_g^{Pl}$ {has a non-singular plane} model {of degree $d\geq5$ that is $K$- equivalent to}
$$C:\,\,X^{d-1}Y+Y^{d-1}Z+\alpha
Z^{d-1}X=0,$$ {where} $\alpha\neq 0$. Then $Aut(\delta)$ is
isomorphic to
{$<\tau,\,\sigma|\,\,\tau^{d^2-3d+3}=\sigma^3=1\,\,\,and\,\,\,\tau\sigma=\sigma\tau^{-(d-1)}\
>,$ a semidirect product of $\mathbb{Z}/3$ by
$\mathbb{Z}/{(d^2-3d+3)}$ and hence $Aut(\delta)$ is of order
$3(d^2-3d+3)$.}
\end{prop}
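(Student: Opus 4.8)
The plan is to recognize the Klein curve $K_d:\,X^{d-1}Y+Y^{d-1}Z+Z^{d-1}X=0$ as the prototype for case (4) of the ``very large'' order classification and to import Harui's structural theorem (Theorem \ref{teoHarui}) together with the normal form from Theorem \ref{thm20}. First I would reduce to the specific model by a change of variables: choosing $\mu\in K$ with $\mu^{d^2-3d+3}=\alpha$ (which exists since $K$ is algebraically closed), the transformation $[X;\mu^{u}Y;\mu^{v}Z]$ with suitably chosen exponents $u,v$ sends $C:\,X^{d-1}Y+Y^{d-1}Z+\alpha Z^{d-1}X=0$ to $C':\,X^{d-1}Y+Y^{d-1}Z+Z^{d-1}X=0$, exactly as in the proof of Proposition \ref{prop15}. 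This lets me assume $\alpha=1$ and work with the honest Klein curve, so that $Aut(\delta)\cong Aut(C')$.

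Next I would exhibit the claimed generators explicitly and verify they are automorphisms. The element $\tau:=[X;\xi_{d^2-3d+3}Y;\xi_{d^2-3d+3}^{-(d-2)}Z]$ is the ``very large'' diagonal automorphism of type $d^2-3d+3,(a,b)$ with $a=(d-1)a+b=(d-1)b\pmod{d^2-3d+3}$ guaranteed by case (3) of Theorem \ref{thm20}, and $\sigma:=[Y;Z;X]$ is the obvious cyclic permutation of the three monomials, of order $3$. A direct substitution checks that both fix $C'$ and that they satisfy the braid relation $\tau\sigma=\sigma\tau^{-(d-1)}$; the subgroup they generate is therefore a semidirect product $\Z/(d^2-3d+3)\rtimes\Z/3$ of order $3(d^2-3d+3)$, which already gives the lower bound $3(d^2-3d+3)\mid|Aut(\delta)|$.

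The crux is the reverse inequality, namely that $Aut(\delta)$ is no larger than this semidirect product. Here I would run through the five alternatives of Harui's Theorem \ref{teoHarui} and eliminate all but case (4). Since $Aut(\delta)$ contains $\tau$ of order $d^2-3d+3$, which for $d\geq5$ exceeds both $2d$ (ruling out descendants of the Fermat curve, case (3)) and the orders available in a fixed-point/fixed-line configuration or the primitive groups of case (5) once $d$ is large enough, the element $\tau$ of such large order forces $Aut(\delta)$ into the Klein-curve alternative (4), i.e. $Aut(\delta)$ is conjugate to a subgroup of $Aut(K_d)$ with $|Aut(\delta)|\mid 3(d^2-3d+3)$. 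Combined with the lower bound this pins down $|Aut(\delta)|=3(d^2-3d+3)$ and hence $Aut(\delta)=\langle\tau,\sigma\rangle$. I expect the main obstacle to be the careful case analysis excluding the fixed-point/fixed-line situation of Theorem \ref{teoHarui}(2) and the primitive groups of (5): one must argue, as in the proof of Proposition \ref{prop12}, that an element of order $d^2-3d+3$ cannot embed into the quotient $G'$ appearing in the short exact sequence (whose order is bounded in terms of $d$) nor into the fixed orders of $PSL(2,7)$, $A_5$, $A_6$, $Hess_{216}$ and its subgroups, which requires treating the small degrees where these orders are comparable. This is precisely why the statement restricts to $d\geq5$ and defers $d=4$ to Remark \ref{rem18}.
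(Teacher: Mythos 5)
Your proposal is correct, but it reaches the upper bound by a different route than the paper. After the same reduction to $\alpha=1$ (the paper uses the explicit map $[X;\mu Y;\mu^{-(d-2)}Z]$ with $\mu^{d^2-3d+3}=\alpha$, matching your rescaling step), the paper simply \emph{cites} Harui's \S 3, where the full automorphism group of the Klein curve $K_d$ is already determined to be $\Z/(d^2-3d+3)\rtimes\Z/3$, and then exhibits the generators $\tau$ and $[Z;X;Y]$ satisfying the presentation. You instead re-derive the order bound from scratch: lower bound from the explicit subgroup $\langle\sigma,\tau\rangle$ with the relation $\sigma^{-1}\tau\sigma=\tau^{-(d-1)}$ (which is valid, since $(d-1)^3\equiv-1 \bmod d^2-3d+3$), and upper bound by eliminating cases (1)--(3) and (5) of Theorem \ref{teoHarui}, exactly the pattern the paper itself uses in Propositions \ref{prop12} and \ref{P3}. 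Your elimination goes through: case (3) since $d^2-3d+3>2d$ for $d\geq5$; case (5) since $d^2-3d+3\geq13$ exceeds every element order in the primitive groups; case (2) since $\gcd(d^2-3d+3,d)$ divides $3$, so the image of $\tau$ in $G'$ has order at least $(d^2-3d+3)/3>\max(d-1,5)$ for $d\geq5$ (this handles the subtlety, absent in Proposition \ref{prop12}, that $|N|$ need not be coprime to the element's order when $3\mid d$). Two small touch-ups: case (1) is ruled out not by the large order of $\tau$ alone but by non-commutativity, which your verified relation supplies since $-(d-1)\not\equiv1$; and note that the divisibility $|H|\mid 3(d^2-3d+3)$ in case (4) is itself Harui's computation of $|Aut(K_d)|$ packaged into Theorem \ref{teoHarui}, so your argument does not actually avoid that input, it just accesses it through the structure theorem. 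What your version buys is a self-contained identification of the group with the stated presentation and order; what the paper's version buys is brevity, at the cost of quoting the Klein-curve result as a black box.
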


\begin{proof}
{Through the transformation $[X;\mu Y;\mu^{-(d-2)}Z]$ where $\mu$ is
defined by the equation $\alpha=\mu^{d^2-3d+3}$ in $K$, $C$ is
isomorphic to the Klein curve $K_d$}. It follows, by Harui
\cite{Harui} $\S 3$, that $Aut(C)$ is a semidirect product of
$\mathbb{Z}/3$ acting on $\mathbb{Z}/{(d^2-3d+3)}$. {Finally we note
that} $\tau:=[X;\zeta_{d^2-3d+3}Y;\zeta_{d^2-3d+3}^{-(d-2)}Z]$ and
$[Z;X;Y]$ {are generators of} $Aut(K_d)$ {and also satisfy the given
representation}.
\end{proof}
\begin{rem}\label{rem18} {The automorphism group of the Klein quartic curve is isomorphic to
$PSL_2(\mathbb{F}_7)$, the unique simple group of order 168 (see \cite{He}). This completes the result for any degree $d\geq4$.}
\end{rem}
%\footnote{F: What happens for $d=4$ is not the same?}
The following result should be well-known in the literature, we
write it for completeness.
\begin{prop}\label{prop14} We have $\delta\in
M_g^{Pl}(\Z/(d^2-3d+3))$ {only if} $\delta$ is isomorphic to the
Klein curve $$K_d:\,\,X^{d-1}Y+Y^{d-1}Z+Z^{d-1}X=0.$$ In particular,
{$M_g^{Pl}(\Z/(d^2-3d+3))$ is irreducible being a set with one
element and also
$$\widetilde{M_g^{Pl}(Aut(K_d))}=M_g^{Pl}(\Z/(d^2-3d+3))=
\rho(M_g^{Pl}(\Z/(d^2-3d+3)))
$$ where $\rho(\Z/d^2-3d+3\Z)=<diag(1,\xi_{d^2-3d+3},\xi_{d^2-3d+3}^{-(d-2)})>$.}
\end{prop}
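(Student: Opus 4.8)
The plan is to mirror exactly the strategy used in Propositions \ref{prop11}, \ref{prop13}, and \ref{prop17}, since the present statement is the ``Klein curve'' analogue of those three cases. The backward implication follows at once from Proposition \ref{prop16}, so the real content is the ``only if'' direction: I must show that a curve $\delta\in M_g^{Pl}(\Z/(d^2-3d+3))$ is forced, up to $K$-isomorphism, to be the Klein curve $K_d$. First I would fix, by the usual abuse of notation, a non-singular plane model $C$ of $\delta$ carrying an order-$(d^2-3d+3)$ automorphism acting diagonally. The crucial divisibility observation is that $d^2-3d+3$ does \emph{not} divide any of the integers $d-1,\,d,\,(d-1)^2,\,d(d-2),\,d(d-1)$; consequently, by Theorem \ref{thm20}, the only case in the list $(1)$--$(6)$ compatible with $m=d^2-3d+3$ is case $(3)$, in which all three reference points lie on $C$. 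So $C$ is projectively equivalent to a type $m,(a,b)$ with $(a,b)\in\Gamma_{d^2-3d+3}$ and $a\equiv(d-1)a+b\equiv(d-1)b\ (\mathrm{mod}\ m)$.

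Next I would pin down a canonical generator of the admissible set of pairs, exactly as was done in the earlier propositions. Solving the congruences $a\equiv(d-1)a+b$ and $a\equiv(d-1)b$ modulo $d^2-3d+3$ and using that any type in the subgroup $\langle m,(a,b)\rangle$ gives a $K$-isomorphic curve, I expect to reduce to the single representative $m=d^2-3d+3$, $a=1$, $b=-(d-2)$ (equivalently $b\equiv d^2-3d+2 \pmod m$), which is precisely the exponent vector appearing in the stated $\rho$. With this generator fixed, the remaining task is the routine monomial-invariance computation: I would show that for the normal form $(3)$ of Theorem \ref{thm20}, each of the index sets $S(1)^{j,X}_{m,(a,b)}$, $S^{j,Y}_{m,(a,b)}$, $S^{j,Z}_{m,(a,b)}$ is empty for every $j=2,\dots,\lfloor d/2\rfloor$, so that all the free parameters $\beta_{j,i},\alpha_{j,i},\gamma_{j,i}$ vanish. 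What survives is only $X^{d-1}Y+Y^{d-1}Z+\alpha Z^{d-1}X$ with $\alpha\neq 0$, and a final rescaling (as in Proposition \ref{prop16}, via $[X;\mu Y;\mu^{-(d-2)}Z]$ with $\mu^{d^2-3d+3}=\alpha$) normalizes $\alpha$ to $1$, giving the Klein curve $K_d$.

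The ``in particular'' clause then follows formally: irreducibility and the single-element description of the locus are immediate once every member is shown $K$-isomorphic to one fixed curve, and the identity $\widetilde{M_g^{Pl}(Aut(K_d))}=M_g^{Pl}(\Z/(d^2-3d+3))=\rho(M_g^{Pl}(\Z/(d^2-3d+3)))$ follows by combining this classification with the computation of the full automorphism group in Proposition \ref{prop16} (and with Remark \ref{rem18} handling $d=4$).

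The main obstacle I anticipate is the emptiness of the index sets, i.e. verifying that no extra invariant monomials can occur for the generator $(1,-(d-2))$. The congruences $i+(j-i)b\equiv a$ and their $Y$- and $Z$-analogues must be shown to have no solutions with $0\le i\le j<d$; the delicate point is that the modulus $d^2-3d+3$ is quadratic in $d$ while the relevant linear combinations $i$, $j-i$, etc., range only up to $d$, so one has to argue that the residues $1,\,-(d-2),\,(d-1)(d-2)\equiv d-1$ generated by powers of $\sigma$ interact with the bound $0\le i\le j\le\lfloor d/2\rfloor$ to preclude any coincidence mod $m$. I expect this to reduce, after clearing the congruences, to elementary inequalities of the same flavor as those settled in the proofs of Propositions \ref{prop13} and \ref{prop17}, but it is the step requiring genuine care rather than bookkeeping.
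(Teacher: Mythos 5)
Your proposal follows the paper's proof essentially step for step: the same reduction via Corollary/Theorem divisibility to case $(3)$ of Theorem \ref{thm20}, the same canonical generator $a=1$, $b\equiv-(d-2)$, the same emptiness verification for $S(1)^{j,X}_{m,(a,b)}$, $S^{j,Y}_{m,(a,b)}$, $S^{j,Z}_{m,(a,b)}$ by the elementary inequality arguments you anticipate (the paper indeed bounds $|j(d-2)-i(d-1)+1|<d^2-3d+3$ and similar expressions, then derives contradictions such as $d\mid 2j-i-1$ with $0<2j-i-1<d$), and the same appeal to Proposition \ref{prop16} and Remark \ref{rem18} for the final clause. One minor slip: modulo $d^2-3d+3$ the residue of $-(d-2)$ is $d^2-4d+5$ (the paper's exponent), not $d^2-3d+2$, which is $\equiv -1$; your main identification $b=-(d-2)$ is nevertheless correct.
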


\begin{proof}
Since $d^2-3d+3\nmid\,\,d-1,\,\,d,\,\,d(d-1),\,\,d(d-2),\,\,(d-1)^2$
for every $d\geq5$ then $C$ is {$K$-equivalent} to a plane curve of
type $d^2-3d+3,\,\,(a,b)$ {of the form $(3)$ in Theorem \ref{thm20}}
%of the form
% \begin{eqnarray*}
%X^{d-1}Y&+&Y^{d-1}Z+\alpha Z^{d-1}X+\\
%&+&\sum_{j=2}^{\lfloor\frac{d}{2}\rfloor}\,\,X^{d-j}\big(\sum_{i\in S(1)^{j,X}_{m,(a,b)}}\beta_{ji}Y^iZ^{j-i}\big)+Y^{d-j}\big(\sum_{i\in S^{j,Y}_{m,(a,b)}}\alpha_{ji}Z^iX^{j-i}\big)+Z^{d-j}\big(\sum_{i\in S^{j,Z}_{m,(a,b)}}\gamma_{ji}X^{j-i}Y^i\big),
%\end{eqnarray*}
for some $(a,b)\in\Gamma_{d^2-3d+3}$ such that
$a=(d-1)a+b=(d-1)b\,(mod\,\,d^2-3d+3)$. {In particular every
solution is of the form $a=k$ and $b=(d^2-3d+3)k'-(d-2)k$ for some
integers $k$ and $k'$. Because}
$[X;\zeta_{d^2-3d+3}Y;\zeta_{d^2-3d+3}^{d^2-4d+5}]^{k}=[X;\zeta_{d^2-3d+3}^kY;\zeta_{d^2-3d+3}^{(d^2-3d+3)k'-(d-2)k}]$,
we can take a generator $a=1$ and $b=d^2-4d+5$. Consequently
\begin{eqnarray*}
{S(1)^{j,X}}&:=&\{i:\,\,0\leq i\leq j\,\,\text{and}\,\,i+(j-i)(d^2-4d+5)=1\,mod\,\,(d^2-3d+3)\}\\
&=&\{i:\,\,0\leq i\leq j\,\,\text{and}\,\,(d^2-3d+3)\,\,|\big(j(d-2)-i(d-1)+1\big)\}\\
&=&\emptyset\,\,\,\,\forall j=2,...,\lfloor\frac{d}{2}\rfloor.
\end{eqnarray*}
{The last equality comes from the fact $|j(d-2)-i(d-1)+1|<d^2-3d+3$
then $j(d-2)-i(d-1)+1=0$. This in turns gives $d|2j-i-1$ which is
impossible because $0<2j-i-1<d$. Also}
\begin{eqnarray*}
{S^{j,Z}}&:=&\{i:\,\,0\leq i\leq j\,\,\text{and}\,\,i+(d-j)(d^2-4d+5)=1\,mod\,\,(d^2-3d+3)\}\\
&=&\{i:\,\,0\leq i\leq j\,\,\text{and}\,\,(d^2-3d+3)\,\,|\big((d-j)(d-2)-i+1\big)\}\\
&=&\emptyset\,\,\,\,{\forall
j=2,...,\lfloor\frac{d}{2}\rfloor\,\,(\text{since}\,\,0<(d-j)(d-2)-i+1<d^2-3d+3)}
\end{eqnarray*}
{Moreover}
\begin{eqnarray*}
{S^{j,Y}}&:=&\{i:\,\,0\leq i\leq j\,\,\text{and}\,\,(d^2-4d+5)i+(d-j)=1\,mod\,\,(d^2-3d+3)\}\\
&=&\{i:\,\,0\leq i\leq j\,\,\text{and}\,\,(d^2-3d+3)\,\,|\big((d-j)-(d-2)i-1\big)\}\\
&=&\emptyset\,\,\,\,{\forall j=2,...,\lfloor\frac{d}{2}\rfloor,}
\end{eqnarray*}
{since $|(d-j)-(d-2)i-1|<d^2-3d+3$ and if $(d-j)-(d-2)i-1=0$ then
$d-2|j-1$ a contradiction because always $0<j-1<d-2$.} Therefore $C$
is isomorphic to $X^{d-1}Y+Y^{d-1}Z+\alpha Z^{d-1}X$ with
$\alpha\neq0$. {The full automorphism of the Klein curve is
classified by Proposition \ref{prop16} and the second statement is
proved.}
\end{proof}

%%%%%%%%%%%%%%%%%%%%%%%%%%%%%%%%%%%%%%%%%%%%%%%%%%%%%%%%%%%%%%%%%%

\section{Characterization of {curves $\delta\in M_g^{Pl}$ whose
$Aut(\delta)$ has ``large'' elements}}

In the previous section we {proved} that if $m$ {is} ``very large'',
the moduli $M_g^{Pl}(\Z/m)$ {is given by one element, therefore are
irreducible set. In general it is difficult, for an arbitrary $m$,
to decide whether the set $M_g^{Pl}(\Z/m)$ is irreducible or not. We
introduced in \cite{BaBacyc1}} a weaker concept than irreducibility
that we {call ``}ES-irreducibility{''}, where a loci
$\rho(M_g^{Pl}(G))$ or $M_g^{Pl}(G)$ is {said to be}
$ES$-irreducible if {it is defined, up to $K$-isomorphism of plane
curves, via a single projective} equation {of} degree $d$ {together
with} certain parameters {that are associated to} the equation under
some {algebraic constraints, in other words, by an unique normal
form up to $K$-isomorphism. Also} any element of the locus
corresponds to {a specific} specialization of the parameters and
{vice versa}. In particular {the ``very large''-$m$ loci
$M_g^{Pl}(\Z/m)$ that appeared in \S2 are} ES-irreducible. It is not
true in general that $M_g^{Pl}(\Z/m)$ is ES-irreducible, see
{counter} examples in \cite{BaBacyc1}, and therefore is not
irreducible {as a subset of the moduli space $M_g$}.

We show here that a ``large''-$m$ locus $M_g^{Pl}(\Z/m)$ is
ES-irreducible and we obtain further details of such loci. {The
situations where $m\in\{\ell d,\ell (d-1)\}$ are} strongly related
to inner and outer Galois points {(we refer to \cite{Yoshihara} for
more details)} which will help in {determining, more precisely, the
automorphism groups} of these loci in some cases.

{One can read Henn \cite{He} or \cite{Bars} for the well-known
results in the literature on quartic curves. Hence, in what follows,
we assume that $d\geq 5$}.

\subsection{Outer and inner Galois points with $d\geq5$}\mbox{}\\

We are interested in non-singular plane curves $\delta\in
{M_g^{Pl}}$ of an arbitrary but {a} fixed degree $d\geq5$ whose
automorphism groups contain homologies of period $d$
(resp.\,\,$d-1$). Recall that {a} homology is {a finite planar
transformation such that by a change of variables it is the same as
certain type $m, (a,b)$ with $ab=0$ (see Mitchell \cite{Mit})}.
{When a homology $\omega$} of period $d$ or $d-1$ {is present inside
$Aut(\delta)$}, the genus of $\delta/<\omega>$ is zero and $\delta$
has {a} unique outer (resp.\,inner) Galois point $P$ (see
\cite[Lemma 3.7]{Harui} {for existence} and \cite{Yoshihara} for
{the definition of an inner or an outer Galois point as well as the
uniqueness in such cases}\footnote{An outer Galois point, if it
exists, is always unique except when the curve is isomorphic to the
Fermat curve, in such case there are exactly 3 outer Galois
points.}). {Furthermore if} a non-singular plane curve $\delta$ of
degree $d\geq 5$ has an outer (resp. {an} inner) Galois point $P${,
then $\tau(P)$ is also an outer (resp. an inner) Galois point of
$\delta$ for any $\tau\in Aut(\delta)$. Consequently if $\delta$ has
an unique inner Galois point then it should be fixed by the full
automorphism group $Aut(\delta)$ hence by \cite[Lemma 11.44]{Book},
$Aut(\delta)$ is a cyclic group provided that Char$(K)=0$.}

\subsubsection{The loci $M_g^{Pl}(\Z/\ell(d-1))$ with {$2\leq\ell\leq d$}.}

\begin{lem} {The locus $M_g^{Pl}(\Z/\ell(d-1))$ where $2\leq\ell\leq d$ is not empty only if}
$d\equiv0\,\,(mod\,\,\ell)$ or $d\equiv1\,\,(mod\,\,\ell)$.
\end{lem}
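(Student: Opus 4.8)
The plan is to reduce the non-emptiness of $M_g^{Pl}(\Z/\ell(d-1))$ to the divisibility constraint furnished by the classification in Theorem \ref{thm20} (equivalently Corollary \ref{cor5}), and then to read off the two admissible residues of $d$ modulo $\ell$ by elementary coprimality arguments. First I would observe that if the locus is non-empty, then a non-singular plane model $C$ of the corresponding $\delta$ carries a cyclic subgroup of $Aut(C)$ of order $m:=\ell(d-1)$; hence by Corollary \ref{cor5} the integer $m$ must divide one of $d-1$, $d$, $d^2-3d+3$, $(d-1)^2$, $d(d-2)$ or $d(d-1)$. The whole proof then amounts to testing each of these six candidates against $m=\ell(d-1)$.

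Next I would discard four of the candidates. Since $\ell\geq 2$ and $d\geq 5$ we have $m=\ell(d-1)\geq 2(d-1)>d>d-1$, so $m$ can divide neither $d-1$ nor $d$ for purely size reasons. For the four remaining candidates the key remark is that any multiple of $m$ is in particular a multiple of $d-1$, so it suffices to decide whether $d-1$ divides each candidate. Here I would use the reductions modulo $d-1$: from $d\equiv 1\pmod{d-1}$ and $d-2\equiv -1\pmod{d-1}$ one gets $d^2-3d+3=(d-1)(d-2)+1\equiv 1\pmod{d-1}$ and $d(d-2)\equiv -1\pmod{d-1}$, so that $\gcd(d-1,\,d^2-3d+3)=\gcd(d-1,\,d(d-2))=1$. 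Because $d-1\geq 4>1$, neither $d^2-3d+3$ nor $d(d-2)$ is divisible by $d-1$, and therefore $m$ cannot divide either of them.

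Finally, the two surviving candidates yield exactly the claimed congruences. If $m=\ell(d-1)$ divides $(d-1)^2$, cancelling the common factor $d-1$ gives $\ell\mid (d-1)$, i.e. $d\equiv 1\pmod{\ell}$; if $m$ divides $d(d-1)$, the same cancellation gives $\ell\mid d$, i.e. $d\equiv 0\pmod{\ell}$. This exhausts all cases and proves the necessary condition. I do not expect any serious obstacle: the only point requiring care is making the two coprimality computations $\gcd(d-1,\,d^2-3d+3)=\gcd(d-1,\,d(d-2))=1$ fully explicit, since these are precisely what rule out the Klein-type and the $d(d-2)$-type orders and thereby isolate the divisors $(d-1)^2$ and $d(d-1)$ as the only relevant ones.
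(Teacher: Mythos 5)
Your proof is correct and follows essentially the same route as the paper: both invoke Corollary \ref{cor5}, rule out the divisors $d-1$, $d$, $d^2-3d+3$ and $d(d-2)$, and conclude $\ell(d-1)\mid (d-1)^2$ or $\ell(d-1)\mid d(d-1)$, whence $d\equiv 1$ or $0\pmod{\ell}$. The only difference is that you spell out the size and $\gcd$ computations (e.g. $d^2-3d+3\equiv 1$ and $d(d-2)\equiv -1\pmod{d-1}$) that the paper leaves implicit in its one-line argument.
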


\begin{proof}
{Since $\ell(d-1)\nmid d-1,\, d,\, d^2-3d+3,\, d(d-2)$ then
$\ell(d-1)| d(d-1)$ or $(d-1)^2$ by Corollary \ref{cor5}.}
\end{proof}

\begin{prop}\label{prop30} {Assume that $d\geq5$ and $2\leq\ell\leq d$ with $d\equiv0\,\,(mod\,\,\ell)$,
then $\delta\in M_g^{Pl}(\Z/\ell(d-1))$ if and only if $\delta$ has a non-singular plane model that is $K$-isomorphic to}
\begin{equation}
C:\,\,X^d+Y^d+\alpha XZ^{d-1}+\sum_{2\leq {\ell k}\leq
d-2}\,\,\beta_{\ell k,\ell k}X^{d-{\ell k}}Y^{{\ell k}},
\end{equation}
 {In particular} $Aut(\delta)$ is a cyclic group of order divisible
by $\ell(d-1)$.
\end{prop}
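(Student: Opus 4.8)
The plan is to prove Proposition~\ref{prop30} by combining the classification normal forms of Theorem~\ref{thm20} with the Galois-point machinery recalled at the start of \S4.1. The statement is a biconditional, so I would treat the two implications separately.

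For the easy (``if'') direction, I would start from the model $C$ displayed in equation~(1) and simply exhibit the automorphism of order $\ell(d-1)$. The natural candidate is the diagonal element $\rho(\sigma)=\mathrm{diag}(1,\xi_{\ell(d-1)}^{a},\xi_{\ell(d-1)}^{b})$ coming from the type $\ell(d-1),(a,b)$ that sits inside the form $(5)$ of Theorem~\ref{thm20}. I would verify that, under the hypothesis $d\equiv0\,(\mathrm{mod}\,\ell)$, the monomials $X^d$, $Y^d$, $XZ^{d-1}$ and each surviving $X^{d-\ell k}Y^{\ell k}$ are invariant, i.e.\ that the exponent $\ell k$ lies in the relevant index set $S(2)^{j,X}_{m,(a,b)}$, so that $\mathrm{diag}(1,\xi^a,\xi^b)$ genuinely fixes $C$. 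One must also check non-singularity of $C$ for generic parameters. This direction is essentially a monomial-invariance bookkeeping computation and I expect it to be routine.

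For the harder (``only if'') direction, I would argue as follows. Let $\delta\in M_g^{Pl}(\Z/\ell(d-1))$ and fix, up to $K$-isomorphism, a plane model $C$. By the preceding Lemma, $\ell(d-1)$ divides $d(d-1)$ or $(d-1)^2$; the hypothesis $d\equiv0\,(\mathrm{mod}\,\ell)$ singles out the divisor $d(d-1)$, so by Theorem~\ref{thm20} the model $C$ is $K$-equivalent to type $\ell(d-1),(a,b)$ of the form $(5)$, with $(a,b)\in\Gamma_{\ell(d-1)}$ satisfying $da\equiv0$ and $(d-1)b\equiv0\,(\mathrm{mod}\,\ell(d-1))$. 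As in the proofs of Propositions~\ref{prop11} and~\ref{prop13}, I would solve these congruences to pin down a canonical generator $(a,b)$ of the cyclic group of admissible types, writing $a=(d-1)k$, $b=dk'$ and using the identity $\mathrm{diag}(1,\xi^{d-1},\xi^{d})^{\,t}=\mathrm{diag}(1,\xi^{(d-1)k},\xi^{dk'})$ to reduce to a single normalized pair. With the generator fixed, I would then compute each index set $S(2)^{j,X}_{m,(a,b)}$, $S_1^{d,X}$, $S_1^{d-1,X}$ appearing in form $(5)$, and show that the congruence $ai+(j-i)b\equiv0\,(\mathrm{mod}\,\ell(d-1))$ forces exactly the surviving monomials $X^{d-\ell k}Y^{\ell k}$ together with $X^d,Y^d,XZ^{d-1}$, giving precisely equation~(1). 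The main obstacle is this congruence analysis: unlike the ``very large'' cases of \S3, here $\ell<d$ so the index sets are \emph{not} empty, and I must carefully determine which residues $i$ survive rather than merely checking emptiness. The hypothesis $d\equiv0\,(\mathrm{mod}\,\ell)$ is exactly what makes the surviving exponents be multiples of $\ell$, and getting that bookkeeping right is the crux.

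Finally, for the assertion that $Aut(\delta)$ is cyclic of order divisible by $\ell(d-1)$, I would invoke the Galois-point discussion preceding the proposition. The homology $\mathrm{diag}(1,1,\xi_{d-1})$ (a power of $\sigma$) has period $d-1$, so $\delta$ possesses a unique inner Galois point $P$; since any automorphism permutes inner Galois points and $P$ is unique, $P$ is fixed by all of $Aut(\delta)$. As $P$ lies on $\delta$, \cite[Lemma 11.44]{Book} forces $Aut(\delta)$ to be cyclic, and it manifestly contains $\sigma$ of order $\ell(d-1)$, so its order is divisible by $\ell(d-1)$. I would remark that the sharper description of $|Aut(\delta)|$ (e.g.\ whether it equals $\ell(d-1)$ or a larger multiple) follows from combining this cyclicity with Corollary~\ref{cor5}, which bounds every cyclic order by $d(d-1)$.
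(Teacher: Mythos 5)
Your proposal is correct and follows essentially the same route as the paper's proof: the same reduction via Corollary \ref{cor5} to type $\ell(d-1),(a,b)$ of form $(5)$ in Theorem \ref{thm20}, the same CRT-style normalization to a single generator followed by the computation of $S_1^{d,X}$, $S_1^{d-1,X}$ and $S(2)^{j,X}$, and the same cyclicity argument via the power of $\sigma$ being a homology of period $d-1$ whose center is the unique inner Galois point, fixed by $Aut(\delta)$ and lying on the curve, so that \cite[Lemma 11.44]{Book} applies. The only deviations are cosmetic: you parametrize $b=dk'$ where the paper uses $b=\ell k'$ (equivalent, since $\gcd(d,\ell(d-1))=\ell$), and you omit the paper's ruling out of Fermat and Klein descendants, which is redundant given the uniqueness of the inner Galois point.
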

\begin{proof}
{$(\Leftarrow)$ }Since $\sigma:=[X;\zeta_{\ell
(d-1)}^{d-1}Y;\zeta_{\ell (d-1)}^{\ell}Z]\in Aut(C)$ {is} of order
$\ell (d-1)$ then {$\delta\in M_g^{Pl}(\Z/\ell(d-1))$} {and
moreover} $C$ is not a descendant of the Klein curve $K_d$ because
$\ell(d-1)\nmid3(d^2-3d+3)$. {Also} $C$ is not a descendant of the
Fermat curve $F_d$, {since $2(d-1)\nmid6d^2$ and $\ell(d-1)>2d$ for
$\ell\geq3$ but $Aut(F_d)$ has elements of order at most $2d$. On
the other hand, $\sigma^{\ell}=[X;Y;\zeta_{\ell(d-1)}^{\ell^2}Z]\in
Aut(C)$ is} a homology of period $d-1\geq4$ {with center $P_3$ and
axis $Z=0$. Therefore the point $P_3$ is an inner Galois point of
$C$ (by Harui \cite[\S3]{Harui}) and it is unique (by Yoshihara
\cite[\S2, Theorem  4']{Yoshihara}) hence should be fixed by
$Aut(C)$. Consequently} $Aut(C)$ is a cyclic group of order
divisible by $\ell(d-1)$.\\
{$(\Rightarrow)$ Conversely,} $\ell (d-1)\nmid
d-1,\,d,\,d^2-3d+3,\,(d-1)^2$ or $d(d-2)$ therefore $\delta$ has a
non-singular plane model which is isomorphic to type $\ell (d-1),
(a,b)$ {of the form $(5)$ of Theorem \ref{thm20}. In particular
$(a,b)\in\Gamma_{\ell(d-1)}$ such that $\ell(d-1)|da$ and
$\ell(d-1)|(d-1)b$ therefore $a=(d-1)k$ and $b=\ell k'$ for some
integers $k$ and $k'$. If we consider any integer $m$ such that
$k\equiv m\,\,(mod\,\,\ell)$ then $[X;\zeta_{\ell
(d-1)}^{d-1}Y;\zeta_{\ell
(d-1)}^{\ell}Z]^{(k'-m)(d-1)+k'}=[X;\zeta_{\ell
(d-1)}^{k(d-1)}Y;\zeta_{\ell (d-1)}^{\ell k'}Z].$ Consequently we
can take $k=1=k'$ as a generator and we get}

%\begin{eqnarray*}
%\,\,X^d&+&Y^d+\sum_{j=2}^{d-2}\,\,\big(X^{d-j}\sum_{i\in S(2)^{j,X}_{m,(a,b)}}\beta_{ji}Y^iZ^{j-i}\big)+\sum_{i\in S_1^{d,X}\,\,{m,\,(a,b)}}\beta_{di}Y^iZ^{d-i}+\\
%&+&X\big(\alpha Z^{d-1}+\sum_{i\in
%S_1^{d-1,X}\,\,{m,\,(a,b)}}\beta_{(d-1)i}Y^iZ^{d-1-i}\big)=0
%\end{eqnarray*}
\begin{eqnarray*}
{S_1^{d,X}}&:=&\{i:\,\,1\leq i\leq d-1\,\,\text{and}\,\,(d-1) i+(d-i)\ell=0\,mod\,\,\ell (d-1)\}\\
&{=}&{\{i:\,\,1\leq i\leq d-1\,\,\text{and}\,\,\ell (d-1)|(d-1) i-(i-1)\ell\}}\\
&\subseteq&\{i:\,\,1\leq i\leq d-1\,\,\text{and}\,\,(d-1)\,\,|(i-1)\}=\{1\}.
\end{eqnarray*}
{Since $\ell(d-1)\nmid(d-1)(\ell+1)$ then $S_1^{d,X}=\emptyset.$
Also}
\begin{eqnarray*}
{S_1^{d-1,X}}&:=&\{i:\,\,1\leq i\leq d-1\,\,\text{and}\,\,(d-1) i+(d-1-i)\ell=0\,mod\,\,\ell (d-1)\}\\
&\subseteq&\{i:\,\,1\leq i\leq d-1\,\,\text{and}\,\,(d-1)\,\,|i\}=\{d-1\}.
\end{eqnarray*}
{But $\ell(d-1)\nmid(d-1)^2$ by the hypothesis on $\ell$, therefore
$S_1^{d-1,X}=\emptyset.$ Moreover}
\begin{eqnarray*}
{S(2)^{j,X}}&:=&\{i:\,\,0\leq i\leq j\,\,\text{and}\,\,(d-1)i+(j-i)\ell=0\,mod\,\,\ell(d-1)\}\\
&\subseteq&\{i:\,\,0\leq i\leq
j\,\,\text{and}\,\,(d-1)\,\,|j-i\}{=}{\{j\}\,\,(\text{since}\,\,0\leq
j-i<d-1)}
\end{eqnarray*}
{By assumption, $\sigma\in Aut(\delta)$ therefore}
$S(2)^{j,X}_{m,(a,b)}=\emptyset$ if $\ell\nmid j$ and $\{j\}$
otherwise. {Substituting into equation $(5)$ in Theorem \ref{thm20},
we obtain the defining equation $(1)$.} \vspace{-.2cm}
\end{proof}

%\begin{prop}\label{prop121} Take $\delta\in M_g^{Pl}(\Z/\ell(d-1))$
%with $d\geq5$, $\ell\geq 2$ and $d\equiv0\,\,(mod\,\,\ell)$. Then
%$\delta$ has a plane non-singular model isomorphic to
%\[
%\,\,X^d+Y^d+\alpha XZ^{d-1}+\sum_{2\leq j=\ell k\leq
%d-2}\,\,\beta_{ji}X^{d-j}Y^j
%\]
%In particular, $Aut(\delta)$ is cyclic.
%\end{prop}

{We also obtain a} similar result when $d\equiv 1\ (mod\ \ell)$:
%and we only state the results next:

\begin{prop}\label{prop31}
{Assume that $d\geq5$ and $2\leq\ell\leq d$ with
$d\equiv1\,\,(mod\,\,\ell)$, then $\delta\in M_g^{Pl}(\Z/\ell(d-1))$
if and only if $\delta$ has a non-singular plane model that is
$K$-isomorphic to}
\begin{equation}
X^d+Y^{d-1}Z+\alpha XZ^{d-1}+\sum_{2\leq {\ell k}\leq
d-2}\,\,\beta_{{\ell k},0}X^{d-\ell k}Z^{{\ell k}}
\end{equation}
{In such case,} $Aut(\delta)$ is {again} cyclic of order divisible
by $\ell(d-1)$.
\end{prop}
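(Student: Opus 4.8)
The plan is to follow the template of Proposition \ref{prop30}, with one decisive change forced by the hypothesis: since $d\equiv 1\,(mod\,\ell)$ we have $\ell\,|\,(d-1)$, hence $\ell(d-1)\,|\,(d-1)^2$, and the relevant normal form from Theorem \ref{thm20} is now $(4.2)$ (the ``$(d-1)^2$'' case) rather than $(5)$. I would first record the elementary divisibilities: reducing modulo $d-1$ gives $d\equiv1$, $d^2-3d+3\equiv1$ and $d(d-2)\equiv-1\,(mod\,d-1)$, so $\ell(d-1)$ divides none of $d-1,d,d^2-3d+3,d(d-2)$; and $\ell(d-1)\,|\,d(d-1)$ would force $\ell\,|\,d$, impossible since $d\equiv1\,(mod\,\ell)$ with $\ell\geq2$. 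By Corollary \ref{cor5} the only surviving possibility is $\ell(d-1)\,|\,(d-1)^2$.

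For the implication $(\Leftarrow)$ I would take $\sigma:=[X;\zeta_{\ell(d-1)}Y;\zeta_{\ell(d-1)}^{(d-1)(\ell-1)}Z]$ and check directly that each monomial of the displayed model is $\sigma$-invariant (note $(d-1)(\ell-1)\equiv-(d-1)\,(mod\,\ell(d-1))$, so the exponent conditions read off immediately) and that $\sigma$ has exact order $\ell(d-1)$, because $\gcd(1,(d-1)(\ell-1))=1$; thus $\delta\in M_g^{Pl}(\Z/\ell(d-1))$. Raising to the $\ell$-th power gives the homology $\sigma^{\ell}=[X;\zeta_{d-1}Y;Z]$ of period $d-1\geq4$, with center $P_2=(0:1:0)$ and axis $\{Y=0\}$. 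Since every term of the model is divisible by $X$ or by $Z$, the center $P_2$ lies on the curve, so by Harui \cite[\S3]{Harui} it is an inner Galois point and by Yoshihara \cite[\S2, Theorem 4']{Yoshihara} it is the unique one; as $Aut(\delta)$ permutes the inner Galois points it must fix $P_2$. Fixing a point lying on the curve, $Aut(\delta)$ is cyclic by \cite[Lemma 11.44]{Book}, and of order divisible by $\ell(d-1)$ since it contains $\sigma$.

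For the implication $(\Rightarrow)$, by the first paragraph $\delta$ is $K$-equivalent to a type $\ell(d-1),(a,b)$ of form $(4.2)$. Writing $m=\ell(d-1)$, the defining congruence $(d-1)a+b\equiv0\,(mod\,m)$ already forces $b\equiv-(d-1)a\,(mod\,m)$, and because $(d-1)^2\equiv0\,(mod\,m)$ the second congruence $(d-1)b\equiv0\,(mod\,m)$ then holds automatically; hence every admissible type is a power of the single generator $a=1,\ b\equiv-(d-1)\,(mod\,m)$, which (by the footnote to Theorem \ref{thm20}) is the only case to analyse. Substituting $b\equiv-(d-1)$ and reducing modulo $d-1$, the condition defining $S(2)^{j,X}_{m,(a,b)}$ becomes $id\equiv j(d-1)\,(mod\,m)$, which forces $i\equiv0\,(mod\,d-1)$, hence $i=0$ (as $0\leq i\leq j\leq d-2$) and then $\ell\,|\,j$; the condition for $S_1^{d-1,X}_{m,(a,b)}$ collapses to $id\equiv0\,(mod\,m)$, forcing $i=d-1$ and then $\ell\,|\,d$, impossible; and the condition for $S_2^{d,X}_{m,(a,b)}$ collapses to $id\equiv d-1\,(mod\,m)$, which has no solution with $2\leq i\leq d-2$. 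Thus $S(2)^{j,X}_{m,(a,b)}=\{0\}$ exactly when $\ell\,|\,j$ and is empty otherwise, while $S_1^{d-1,X}_{m,(a,b)}=S_2^{d,X}_{m,(a,b)}=\emptyset$. Feeding this back into $(4.2)$ leaves precisely $X^d+Y^{d-1}Z+\alpha XZ^{d-1}+\sum_{2\leq\ell k\leq d-2}\beta_{\ell k,0}X^{d-\ell k}Z^{\ell k}$, as asserted.

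The calculations are all routine reductions modulo $d-1$; the real care lies in the two structural points that distinguish this statement from Proposition \ref{prop30}. First, one must recognise that $d\equiv1\,(mod\,\ell)$ places $\delta$ in form $(4.2)$ (via $\ell(d-1)\,|\,(d-1)^2$) and makes the second congruence of $(4.2)$ redundant, which is what streamlines the generator analysis. Second, in the converse one must check that the period-$(d-1)$ homology has its center \emph{on} the curve, so that the relevant Galois point is inner; this is essential because Yoshihara's uniqueness can fail for outer Galois points (the Fermat curve has three), whereas the inner Galois point is unconditionally unique, which is exactly what forces $Aut(\delta)$ to fix it and hence to be cyclic.
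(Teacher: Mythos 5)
Your proposal is correct and follows essentially the same route as the paper: the backward direction uses the same generator $\sigma=[X;\zeta_{\ell(d-1)}Y;\zeta_{\ell(d-1)}^{(\ell-1)(d-1)}Z]$, the homology $\sigma^{\ell}$ of period $d-1$ with center on the curve, and uniqueness of the inner Galois point to force $Aut(\delta)$ cyclic, while the forward direction reduces via Corollary \ref{cor5} to type $\ell(d-1),(a,b)$ of form $(4.2)$ of Theorem \ref{thm20}, identifies the single generator $a=1$, $b\equiv-(d-1)$, and computes $S(2)^{j,X}$, $S_1^{d-1,X}$, $S_2^{d,X}$ exactly as in the paper. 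Your observation that the congruence $(d-1)b\equiv0\,(mod\,\ell(d-1))$ is automatic (the paper instead parametrizes solutions by $k,k'$) and your explicit check that the homology's center lies on the curve are minor streamlinings of, not departures from, the paper's argument.
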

\begin{proof}
{$(\Leftarrow)$  We need only to redefine $\sigma$ to be the
automorphism $[X;\zeta_{\ell (d-1)}Y;\zeta_{\ell
(d-1)}^{(\ell-1)(d-1)}Z]$ and the rest of the argument will be quite similar.}\\
{$(\Rightarrow)$ It follows by Corollary \ref{cor5} that $\delta$
has a non-singular plane model which is isomorphic to type $\ell
(d-1), (a,b)$ of the form $(4.2)$ of Theorem \ref{thm20}. In
particular $(a,b)\in\Gamma_{\ell(d-1)}$ such that
$\ell(d-1)|(d-1)a+b, (d-1)b$ therefore $b=(d-1)k'$ and $a=\ell k-k'$
for some integers $k$ and $k'$. But $[X;\zeta_{\ell
(d-1)}Y;\zeta_{\ell (d-1)}^{(\ell-1)(d-1)}Z]^{\ell
k-k'}=[X;\zeta_{\ell (d-1)}^aY;\zeta_{\ell(d-1)}^{b}Z]$ therefore it
suffices to consider $k=1$ and $k'=\ell-1$ and we obtain
\begin{eqnarray*}
S_1^{d-1,X}&:=&\{i:\,\,1\leq i\leq d-1\,\,\text{and}\,\,i+(d-1-i)(\ell-1)(d-1)=0\,mod\,\,\ell (d-1)\}\\
&=&\{i:\,\,1\leq i\leq d-1\,\,\text{and}\,\,\ell(d-1)\,\,|di\}=\emptyset\,\,(\text{because}\,\,0<i<\ell(d-1)),\\
S_2^{d,X}&:=&\{i:\,\,2\leq i\leq d-2\,\,\text{and}\,\,i+(d-i)(\ell-1)(d-1)=0\,mod\,\,\ell (d-1)\}\\
&=&\{i:\,\,2\leq i\leq d-2\,\,\text{and}\,\,\ell (d-1)|di-(d-1)\}\\
&\subseteq&\{i:\,\,2\leq i\leq d-2\,\,\text{and}\,\,d-1|di\}=\emptyset\,\,(\text{because}\,\,0<i<d-1),\\
S(2)^{j,X}&:=&\{i:\,\,0\leq i\leq j\,\,\text{and}\,\,i+(j-i)(\ell-1)(d-1)=0\,mod\,\,\ell(d-1)\}\\
&=&\{i:\,\,0\leq i\leq j\,\,\text{and}\,\,\ell(d-1)|di-j(d-1)\}\\
&\subseteq&\{i:\,\,0\leq i\leq j\,\,\text{and}\,\,d-1|di\}=\{0\}.
\end{eqnarray*}
But $\ell(d-1)|j(d-1)$ whenever $i=0$ thus $\ell|j$. Therefore equation $(2)$ is obtained by substituting in the form $(4.2)$ of Theorem \ref{thm20}.
}
\vspace{-.2cm}
\end{proof}

{The following corollaries are immediate consequences of
Propositions \ref{prop30} and \ref{prop31}:}

\begin{cor}\label{cor101} The loci $M_g^{Pl}(\Z/\ell(d-1))$ with {$2\leq\ell\leq d$} and $d\geq 5$ {are}
empty or ES-irreducible given by one normal form.
\end{cor}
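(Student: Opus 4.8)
The plan is to obtain the statement as a short case analysis on the residue of $d$ modulo $\ell$, feeding on the three preceding results. First I would recall the definition: a locus is ES-irreducible exactly when, up to $K$-isomorphism of plane models, it is described by a single projective equation of degree $d$ carrying parameters subject to algebraic constraints, in such a way that its elements are precisely the admissible (non-singular) specializations of those parameters. Hence it suffices, for each $\ell$ with $2\leq\ell\leq d$, either to produce one such normal form or to show the locus is empty.

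Next I would partition into the three cases $d\equiv0$, $d\equiv1$, and $d\not\equiv0,1\pmod{\ell}$. Because $\ell\geq 2$, the first two cannot coincide (this would force $\ell\mid 1$), so the trichotomy is genuine and exhaustive. When $d\not\equiv0,1\pmod{\ell}$ the preceding Lemma gives $M_g^{Pl}(\Z/\ell(d-1))=\emptyset$, which is one of the two allowed outcomes. When $d\equiv0\pmod{\ell}$, Proposition \ref{prop30} shows that $\delta$ lies in the locus if and only if it admits a non-singular plane model $K$-isomorphic to the equation displayed there; the forward implication writes every locus element as a specialization of that single equation, and the converse certifies that every admissible specialization re-enters the locus. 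This is exactly the defining property of ES-irreducibility, with the equation of Proposition \ref{prop30} as the normal form. The case $d\equiv1\pmod{\ell}$ is handled verbatim using Proposition \ref{prop31} and its displayed equation.

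Since the three cases are exhaustive and each terminates in either emptiness or a single normal form, the corollary follows. I do not anticipate a real obstacle: the substantive work lives in the biconditionals of Propositions \ref{prop30} and \ref{prop31}, and the only point deserving attention is to confirm that their ``if and only if'' statements supply both halves that ES-irreducibility demands --- surjectivity of the parameter family onto the locus, and membership in the locus of every non-singular specialization --- both of which are already explicit in those propositions.
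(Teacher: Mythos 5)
Your proof is correct and follows exactly the paper's route: the paper presents this corollary as an immediate consequence of the preceding Lemma and of Propositions \ref{prop30} and \ref{prop31}, which is precisely the trichotomy on $d \bmod \ell$ that you spell out. Your added observation that the cases $d\equiv 0$ and $d\equiv 1 \pmod{\ell}$ are mutually exclusive for $\ell\geq 2$ (so that exactly one normal form applies) is a worthwhile detail the paper leaves implicit.
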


\begin{cor}\label{cor100}
{The automorphism group of any} $\delta{\in M_g^{Pl}(\Z/\ell(d-1))}$
with $2\leq\ell\leq {d}$ is cyclic {and always contains a homology
of period $d-1$.} In particular $\delta$ has a unique inner Galois
point.
\end{cor}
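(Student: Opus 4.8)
The plan is to assemble the statement from the two normal-form results already established, Propositions \ref{prop30} and \ref{prop31}, together with the emptiness Lemma stated just above. First I would recall that, by that Lemma, the locus $M_g^{Pl}(\Z/\ell(d-1))$ is empty unless $d\equiv 0$ or $d\equiv 1\ (mod\ \ell)$, and these are exactly the two hypotheses treated in Propositions \ref{prop30} and \ref{prop31}. In each of those propositions the conclusion already records that $Aut(\delta)$ is cyclic, so the cyclicity assertion of the corollary needs no new argument: it is inherited verbatim.

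To produce the homology of period $d-1$, I would take the distinguished order-$\ell(d-1)$ generator $\sigma$ exhibited in the relevant proof and compute $\sigma^{\ell}$. Write $\zeta=\zeta_{\ell(d-1)}$. In the case $d\equiv 0\ (mod\ \ell)$ one has $\sigma=[X;\zeta^{d-1}Y;\zeta^{\ell}Z]$, so $\sigma^{\ell}=[X;Y;\zeta^{\ell^2}Z]$ is a homology with axis $Z=0$ and center $P_3$; its surviving eigenvalue $\zeta^{\ell^2}$ has exact order $\ell(d-1)/\gcd(\ell(d-1),\ell^2)=d-1$, where I would check $\gcd(\ell(d-1),\ell^2)=\ell$ from $\gcd(d-1,\ell)=1$ (since $\ell\mid d$, any common divisor of $d-1$ and $\ell$ divides both $d$ and $d-1$, hence divides $1$). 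In the case $d\equiv 1\ (mod\ \ell)$ one has $\sigma=[X;\zeta Y;\zeta^{(\ell-1)(d-1)}Z]$, so $\sigma^{\ell}=[X;\zeta^{\ell}Y;Z]$ is a homology with axis $Y=0$ and center $P_2$, and $\zeta^{\ell}$ has exact order $\ell(d-1)/\gcd(\ell(d-1),\ell)=d-1$. In both situations the center ($P_3$, resp.\ $P_2$) lies on the curve, as one sees by inspecting the normal forms in Propositions \ref{prop30} and \ref{prop31}, so the homology really has period $d-1$ and $Aut(\delta)$ contains it.

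For the last assertion I would invoke the homology/Galois-point dictionary recalled at the start of this subsection: a homology of period $d-1$ whose center lies on $\delta$ forces that center to be an inner Galois point of $\delta$ by Harui \cite[\S3]{Harui}, and an inner Galois point is unique by Yoshihara \cite[\S2, Theorem 4']{Yoshihara}. Hence $\delta$ has a unique inner Galois point. I note that this also gives an independent route to cyclicity, since the unique inner point must be preserved by all of $Aut(\delta)$, which is then cyclic by \cite[Lemma 11.44]{Book}.

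All of this is essentially bookkeeping: the congruences $d\equiv 0,1\ (mod\ \ell)$ have already done their work inside Propositions \ref{prop30} and \ref{prop31}, fixing both the normal form and the explicit generator $\sigma$, and the only genuinely new computation is the order of the surviving eigenvalue of $\sigma^{\ell}$. I expect no real obstacle beyond carrying out the two $\gcd$ evaluations correctly, the more delicate one being the case $d\equiv 0\ (mod\ \ell)$, where one must rule out that $\zeta^{\ell^2}$ has order a proper divisor of $d-1$.
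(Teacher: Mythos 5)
Your proposal is correct and matches the paper's route: the paper derives this corollary as an immediate consequence of Propositions \ref{prop30} and \ref{prop31}, whose proofs contain exactly your computation that $\sigma^{\ell}$ is a homology of period $d-1$ (e.g.\ $[X;Y;\zeta_{\ell(d-1)}^{\ell^2}Z]$ with center $P_3$ in the $d\equiv 0\ (mod\ \ell)$ case) together with the same citations of Harui and Yoshihara for existence and uniqueness of the inner Galois point. Your closing observation is in fact how the paper itself obtains cyclicity inside those propositions (unique inner Galois point fixed by $Aut(\delta)$, then \cite[Lemma 11.44]{Book}), so it is not an independent route but the intended one.
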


\begin{rem}\label{rem33} {The converse of Corollary \ref{cor100} is also true. In the sense that, if $C$ is a non-singular projective plane curve of degree $d\geq 5$ such that $Aut(C)$ contains a homology $\sigma$ of order $d-1$ with center $P$ then $C$ has an inner Galois point $P$ by \cite[Lemma 3.7]{Harui} and moreover it is unique by \cite[Theorem
4]{Yoshihara}. This point should be fixed by $Aut(C)$ which in turns implies that $Aut(C)$ is cyclic by \cite[Lemma 11.44]{Book}.
% On the other hand, if $\sigma$ is not a homology (i.e it is of type $m,(a,b)$
%with $ab\neq 0$) then it is not true in general that $Aut(C)$ is a cyclic group.}
.}
\end{rem}

\subsubsection{The loci $M_g^{Pl}(\Z/\ell d)$ {with $2\leq\ell\leq d-1$}.}

\begin{lem} {The locus $M_g^{Pl}(\Z/\ell d)$ where $2\leq\ell\leq d-1$ is not empty only if} $d=1\,\,(mod\,\ell)$ or
$d\equiv2\,\,(mod\,\ell)$.
\end{lem}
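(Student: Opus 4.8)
The plan is to reduce the statement to a pure divisibility question via Corollary \ref{cor5} and then eliminate cases, exactly mirroring the proof of the preceding lemma for $\ell(d-1)$. If $M_g^{Pl}(\Z/\ell d)$ is non-empty, then some $\delta$ admits a cyclic subgroup of order $\ell d$ inside $Aut(\delta)$, so Corollary \ref{cor5} forces $\ell d$ to divide one of the six integers $d-1$, $d$, $d^2-3d+3$, $(d-1)^2$, $d(d-2)$ or $d(d-1)$. The whole task is then to determine, under the standing hypotheses $2\leq\ell\leq d-1$ and $d\geq5$, which of these six divisibilities can actually occur, and to read off the congruence that each surviving case imposes on $d$ modulo $\ell$.

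First I would dispose of the two smallest targets by a size comparison: since $\ell\geq2$ we have $\ell d\geq 2d>d>d-1$, so $\ell d$ cannot divide either $d-1$ or $d$. Next I would use the observation that $d\mid\ell d$, so for $\ell d$ to divide any remaining integer $N$ it is necessary that $d\mid N$. Applying this to the two quadratic targets: $d\mid d^2-3d+3$ is equivalent to $d\mid 3$, which is impossible for $d\geq5$ (this is the one place the hypothesis $d\geq5$ is genuinely needed); and $d\mid(d-1)^2$ is equivalent to $d\mid1$, also impossible. Hence both $d^2-3d+3$ and $(d-1)^2$ are excluded.

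Only $d(d-2)$ and $d(d-1)$ survive. Here I would simply translate the divisibility into a condition on $\ell$: writing $d(d-2)=q\,\ell d$ shows $\ell d\mid d(d-2)$ exactly when $\ell\mid d-2$, i.e. $d\equiv2\ (mod\ \ell)$, and likewise $\ell d\mid d(d-1)$ exactly when $\ell\mid d-1$, i.e. $d\equiv1\ (mod\ \ell)$. Combining the two surviving cases yields the asserted necessary condition $d\equiv1$ or $d\equiv2\ (mod\ \ell)$.

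I do not expect a real obstacle here: the argument is a short finite case analysis built entirely on Corollary \ref{cor5}, and structurally it is the $\ell d$-analogue of the $\ell(d-1)$ lemma proved just above. The only subtlety is purely bookkeeping, namely remembering to invoke $d\geq5$ when discarding the Klein-curve value $d^2-3d+3$; every other elimination is unconditional once one notes that $d\mid\ell d$.
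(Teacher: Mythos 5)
Your proof is correct and follows essentially the same route as the paper, which simply invokes Corollary \ref{cor5} and notes that $\ell d\nmid d-1,\,d,\,d^2-3d+3,\,(d-1)^2$; you have merely made the eliminations explicit (size comparison for $d-1$ and $d$, and the reduction $d\mid N$ giving $d\mid 3$ and $d\mid 1$ for the quadratic values). One trivial remark: the elimination of $d^2-3d+3$ via $d\mid 3$ already fails for every $d\geq 4$, so the hypothesis $d\geq 5$ is not genuinely needed even there.
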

\begin{proof}
{The result follows by Corollary \ref{cor5}, since $\ell d\nmid
d-1,\,d,\,d^2-3d+3,\,(d-1)^2$}.
\end{proof}
\begin{prop}\label{prop32}
{Assume that $d\geq5$ and $3\leq\ell\leq d-1$ with
$d\equiv1\,(mod\,\ell)$, then $\delta\in M_g^{Pl}(\Z/\ell d)$ if and
only if $\delta$ has a non-singular plane model that is
$K$-isomorphic to}
\begin{equation}
\tilde{C}:\,X^d+Y^d+\alpha XZ^{d-1}+\sum_{2\leq \ell k\leq
d-2}\,\,\beta_{\ell k,0}X^{d-\ell k}Z^{\ell k}
\end{equation}
where $\alpha\neq0$. {In this case, $Aut(\delta)$} should fix a line
and a point off that line {and every automorphism of $\delta$ is
projectively equivalent to a transformation of the form
$[\alpha_1X+\alpha_3Z;Y;\gamma_1X+\gamma_3Z]$.}
\end{prop}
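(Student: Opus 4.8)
The plan is to prove the two implications and then determine $Aut(\delta)$, following the computational pattern of Propositions \ref{prop30} and \ref{prop31} for the normal form and exploiting an \emph{outer} Galois point for the group structure. For the easy direction ($\Leftarrow$) I would exhibit the explicit automorphism $\sigma=[X;\zeta_{\ell d}^{\ell}Y;\zeta_{\ell d}^{d}Z]$ and check invariance of $\tilde C$ monomial by monomial: the relevant exponents are $ad=\ell d\equiv 0$, $b(d-1)=d(d-1)\equiv 0\,(\mathrm{mod}\ \ell d)$ (using $\ell\mid d-1$, which is exactly $d\equiv 1\,(\mathrm{mod}\ \ell)$), and $b\,\ell k=d\,\ell k\equiv 0\,(\mathrm{mod}\ \ell d)$. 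Since $\gcd(\ell,d)=1$, the order of $\sigma$ is exactly $\ell d$, so $\Z/\ell d\hookrightarrow Aut(\tilde C)$ and $\delta\in M_g^{Pl}(\Z/\ell d)$.

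For the normal-form direction ($\Rightarrow$) I would first localize the type. By Corollary \ref{cor5} and the arithmetic facts $\ell d\nmid d-1,\,d,\,d^2-3d+3,\,(d-1)^2$ together with $\ell d\nmid d(d-2)$ (the last because $d\equiv 1\,(\mathrm{mod}\ \ell)$ forces $\ell\nmid d-2$), only $\ell d\mid d(d-1)$ survives, so $\delta$ is $K$-equivalent to a type $\ell d,(a,b)$ of form $(5)$ in Theorem \ref{thm20}. The conditions $da\equiv 0$ and $(d-1)b\equiv 0\,(\mathrm{mod}\ \ell d)$ force $a=\ell k$ and, since $\gcd((d-1)/\ell,d)=1$, also $b=dk'$; a CRT argument shows a suitable power of $[X;\zeta_{\ell d}^{\ell}Y;\zeta_{\ell d}^{d}Z]$ realizes any such type, so I may take the generator $(a,b)=(\ell,d)$. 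Substituting $a=\ell,b=d$ and reducing modulo $d$, I get $S_1^{d,X}=S_1^{d-1,X}=\emptyset$ and $S(2)^{j,X}=\{0\}$ precisely when $\ell\mid j$ (empty otherwise); feeding this back into form $(5)$ leaves exactly $X^d+Y^d+\alpha XZ^{d-1}+\sum_{2\le \ell k\le d-2}\beta_{\ell k,0}X^{d-\ell k}Z^{\ell k}$, with $\alpha\neq 0$ forced by non-singularity.

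It remains to pin down $Aut(\delta)$, which carries the real content. On the model $\tilde C$ the power $\sigma^{\ell}=[X;\zeta_{\ell d}^{\ell^2}Y;Z]$ is a homology of period $d$ with center $P_2=(0:1:0)$ and axis $\{Y=0\}$; since the $Y^d$-term shows $P_2\notin\tilde C$, the point $P_2$ is an outer Galois point by Harui \cite[Lemma 3.7]{Harui}. Because $\ell\geq 3$ yields an element of order $\ell d>2d$, whereas $Aut(F_d)$ has elements of order at most $2d$, the curve is not the Fermat curve, so the outer Galois point is unique (\cite{Yoshihara}) and is fixed by $Aut(\tilde C)$. By Theorem \ref{teoHarui}$(2)$ this places $Aut(\tilde C)$ in a short exact sequence $1\to N\to Aut(\tilde C)\to G'\to 1$ with $N$ cyclic of order dividing $d$; as $\sigma^{\ell}\in N$ has order $d$, we get $N=\langle\sigma^{\ell}\rangle$ of order $d$.

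The key step, and what I expect to be the main obstacle, is upgrading ``fixes $P_2$'' to ``fixes the axis.'' Every nontrivial element of the cyclic group $N$ is a power of the homology $\sigma^{\ell}$, hence a homology with center $P_2$ and the same axis $\{Y=0\}$. Since $N=\ker(Aut(\tilde C)\to G')$ is normal, for any $g\in Aut(\tilde C)$ the conjugate $g\sigma^{\ell}g^{-1}$ lies in $N\setminus\{1\}$ and is a homology with center $g(P_2)=P_2$ and axis $g(\{Y=0\})$; comparing axes forces $g(\{Y=0\})=\{Y=0\}$. Thus $Aut(\tilde C)$ fixes both the point $P_2$ and the line $\{Y=0\}$, with $P_2\notin\{Y=0\}$, and any matrix fixing $(0:1:0)$ while preserving $\{Y=0\}$ is exactly of the form $[\alpha_1X+\alpha_3Z;Y;\gamma_1X+\gamma_3Z]$. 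Beyond this normality-plus-common-axis argument, the only delicate point is the careful use of the hypothesis $\ell\geq 3$ to discard the Fermat (descendant) case, which is precisely why the case $\ell=2$ must be treated separately.
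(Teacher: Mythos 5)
Your proposal is correct and follows essentially the same route as the paper's own proof: the $(\Rightarrow)$ direction is the identical Theorem \ref{thm20} computation (type $(5)$, generator $(a,b)=(\ell,d)$, the sets $S_1^{d,X}$, $S_1^{d-1,X}$ empty and $S(2)^{j,X}=\{0\}$ exactly when $\ell\mid j$), and the group-structure part uses the same chain of the homology $\sigma^{\ell}$ of period $d$, Harui's Lemma 3.7 to get an outer Galois point at $P_2$, Yoshihara's uniqueness after ruling out the Fermat curve via the order bound $\ell d>2d$, and the exact sequence of Theorem \ref{teoHarui}(2). The only deviation is local and harmless: where the paper cites Mitchell (Theorem 4) to conclude that the axis $\{Y=0\}$ is fixed along with the center, you prove it directly by conjugating $\sigma^{\ell}$ inside the normal cyclic subgroup $N=\langle\sigma^{\ell}\rangle$ and comparing axes of homologies --- a valid self-contained substitute for that citation.
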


\begin{proof}
$(\Leftarrow)$ Since $\sigma:=[X;\zeta_{\ell d}^{\ell}Y;\zeta_{\ell
d}^{d}Z]\in Aut(\tilde{C})$ {is} of order $\ell d$ then {$\delta\in
M_g^{Pl}(\Z/\ell d)$ and moreover $\sigma^{\ell}\in Aut(\tilde{C})$
is} a homology of period ${d}>4$ with center $P_2$ and axis $Y=0$.
{In particular, by \cite{Mit}, $Aut(\tilde{C})$ fixes} a line and a
point off that line or it fixes a triangle. {Assume that} it fixes a
triangle and neither a point nor line is leaved invariant, then
$\tilde{C}$ {is} a descendant of the Klein curve $K_d$ or the Fermat
curve $F_d$ {which is impossible because $\ell d\nmid3(d^2-3d+3)$
and elements of $Aut(F_d)$ have orders at most $2d<\ell d$.}
   %If $\ell=2$ and $Aut(C_1)$ is conjugate to a subgroup of $Aut(F_d)$ then this occurs through an invertible element inside
%   $PGL_3(\mathbb{C})$ of the form $[\alpha_1X+\alpha_3Z;Y;\gamma_1X+\gamma_3Z]$ since
%   $P[X;\zeta_{2d}^4Y;Z]P^{-1}=[X;\zeta_{2d}^4Y;Z]$ being; $[X;\zeta_{2d}^4Y;Z]\in Aut(F_d)$(for simplicity, assume $det(P)=1$).
%   In particular, $P[X;Y;\zeta_{2d}^dZ]P^{-1}\in Aut(F_d)$ that is,
%\[\alpha_1^d\big(\gamma_3X-\alpha_3Z\big)^d+\gamma_3^d\big(\gamma_1X-\alpha_1Z\big)^d=k(X^d+Z^d).\]
%From which we must have
%\begin{eqnarray*}
%\alpha_1\alpha_3^{d-1}+\gamma_1\gamma_3^{d-1}&=&0\,\,\,(XZ^{d-1})\\
%\alpha_1^2\alpha_3^{d-2}+\gamma_1^2\gamma_3^{d-2}&=&0\,\,\,(X^2Z^{d-2})
%\end{eqnarray*}
%Consequently, $\gamma_1=\alpha_3=0$ a contradiction (because we get $-1=(\alpha_1\gamma_3)^d=1$).\\
{Consequently a line and a point off that line is leaved invariant.
Also it follows by \cite{Harui} $\S 3$,} that the point {$P_2$} is
an outer Galois point of $\tilde{C}$. Moreover {it is unique
because} $\tilde{C}$ is not isomorphic to the Fermat curve $F_d$
{(\cite{Yoshihara} $\S 2$ Theorem $4'$) hence this point should be
fixed by $Aut(\tilde{C})$. Furthermore the axis $Y=0$ should also be
fixed (see \cite{Mit}, Theorem 4) that is automorphisms of
$\tilde{C}$ are of the form $[\alpha_1X+\alpha_3Z;Y;\gamma_1X+\gamma_3Z]$.}\\
$(\Rightarrow)$ Conversely, one may follow the same line of argument
in Proposition \ref{prop30} to conclude that $\tilde{C}$ is
isomorphic to type $\ell d, (\ell k,dk')$ of the form $(5)$ of
Theorem \ref{thm20} and to figure out that we can assume $k=1=k'$ as
a generator, since $[X;\zeta_{\ell d}^{\ell}Y;\zeta_{\ell
d}^{d}Z]^{(k'-m)d+k}=[X;\zeta_{\ell d}^{\ell k}Y;\zeta_{\ell
d}^{dk'}Z]$ where $k\equiv m\,(mod\,\ell)$. In this case, we get
\begin{eqnarray*}
{S_1^{d,X}}&:=&\{i:\,\,1\leq i\leq d-1\,\,\text{and}\,\,\ell i+(d-i)d=0\,mod\,\,\ell d\}\\
&{=}&\{i:\,\,1\leq i\leq d-1\,\,\text{and}\,\,\ell d\,\,|i(d-\ell)-d\}\\
&{\subseteq}&{\{i:\,1\leq i\leq
d-1\,\,\text{and}\,\,d|i\}}=\emptyset.
\end{eqnarray*}
{Similarly $S_1^{d-1,X}\subseteq\{i:\,\,1\leq i\leq
d-1\,\,\text{and}\,\,d\,\,|i\}=\emptyset$. Furthermore $i\in
S(2)^{j,X}$ iff $\ell d|\ell i-(j-i)d$ thus $d|i$ and $i=0$. That is
$i\in S(2)^{j,X}\neq\emptyset$ only if $\ell|j$ which completes the
proof. }
\end{proof}

\begin{rem} {For $\ell=2$, proposition \ref{prop32} is true with the same proof if we assume that $\delta$ is not a descendent of the Fermat curve of
degree $d$.}
\end{rem}

{There is a similar statement to the previous results when
$\ell|d-2$. We state only the result since the proof can be obtained
through similar techniques:}
\begin{prop} {Assume that $d\geq5$ and $2\leq\ell\leq d-1$ with $d\equiv2\,(mod\,\ell)$, then $\delta\in M_g^{Pl}(\Z/\ell d)$ if and only if $\delta$ has a non-singular plane model that is $K$-isomorphic to}
\begin{equation}
\widehat{C}:\,X^d+Y^{d-1}Z+\alpha YZ^{d-1}+\sum_{2\leq i=\ell
k+1\leq d-2}\beta_{d,i}Y^iZ^{d-i}=0.
\end{equation}
{Moreover $\widehat{C}$ is a descendant of the Fermat curve $F_d$
(only if $\ell=2$) or $Aut(\delta)$ fixes a line and a point off
this line (in particular automorphisms of $\widehat{C}$ have the
form $[X;\beta_2Y+\beta_3Z;\gamma_2Y+\gamma_3Z])$.}
\end{prop}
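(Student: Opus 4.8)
The plan is to follow the proof of Proposition \ref{prop32} almost verbatim, with the outer Galois point relocated to $P_1=(1:0:0)$ and the divisibility $\ell d\mid d(d-2)$ now forced by the hypothesis $d\equiv 2\pmod\ell$. First I would settle the implication $(\Leftarrow)$: on $\widehat C$ the diagonal map $\sigma:=[X;\zeta_{\ell d}Y;\zeta_{\ell d}^{-(d-1)}Z]$ has order $\ell d$, and I would verify $\sigma\in Aut(\widehat C)$ monomial by monomial. The term $X^d$ is fixed automatically; $Y^{d-1}Z$ contributes $(d-1)-(d-1)\equiv 0$; $YZ^{d-1}$ contributes $1-(d-1)^2=-d(d-2)\equiv 0\pmod{\ell d}$, which uses precisely $\ell\mid d-2$; and each $Y^iZ^{d-i}$ contributes $\equiv d(i-1)\pmod{\ell d}$, vanishing exactly when $i\equiv 1\pmod\ell$, in agreement with the index set $i=\ell k+1$ of the normal form. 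This already places $\delta$ in $M_g^{Pl}(\Z/\ell d)$.

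For $(\Rightarrow)$ I would invoke Corollary \ref{cor5} together with the preceding lemma: since $\ell d$ divides none of $d-1,d,d^2-3d+3,(d-1)^2,d(d-1)$, while $d\equiv 2\pmod\ell$ forces $\ell d\mid d(d-2)$, every plane model of $\delta$ is $K$-equivalent to a type $\ell d,(a,b)$ of the form $(4.1)$ in Theorem \ref{thm20}. The two congruences there are $(d-1)a+b\equiv 0$ and $a+(d-1)b\equiv 0\pmod{\ell d}$; the key simplification is that the first gives $b\equiv-(d-1)a$, whence $a+(d-1)b\equiv a\bigl(1-(d-1)^2\bigr)=-a\,d(d-2)\equiv 0$ automatically, so the solution set is the cyclic group generated by $(a,b)=(1,-(d-1))$. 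As in the footnote to Theorem \ref{thm20} it suffices to work with this generator. A short computation then shows $S(2)^{j,X}_{m,(a,b)}=\emptyset$ for every $2\leq j\leq d-1$, since $di\equiv(d-1)j\pmod{\ell d}$ forces $d\mid j$, which is impossible in that range, and that $S_2^{d,X}\,\,{m,(a,b)}=\{i:\,2\leq i\leq d-2,\ i\equiv 1\pmod\ell\}$; substituting into $(4.1)$ returns exactly $\widehat C$.

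For the \emph{moreover} clause I would study $Aut(\widehat C)$ through the homology $\sigma^\ell=[\zeta_d^{-1}X;Y;Z]$ of period $d>4$, whose center is $P_1=(1:0:0)$ and whose axis is $\{X=0\}$. By Mitchell \cite{Mit}, $Aut(\widehat C)$ either fixes a point together with a line off it, or fixes a triangle. In the triangle case the proof of Theorem \ref{teoHarui} forces $\widehat C$ to be a descendant of $F_d$ or of $K_d$; the Klein alternative is excluded because $\ell d\nmid 3(d^2-3d+3)$, and a Fermat descendant can arise only for $\ell=2$, since for $\ell\geq 3$ one has $\ell d>2d$ while $Aut(F_d)$ has no element of order exceeding $2d$. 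Otherwise $Aut(\widehat C)$ fixes $P_1$ and the line $\{X=0\}$; as $P_1\notin\widehat C$ it is the outer Galois point of $\widehat C$ (Harui \cite[\S3]{Harui}), unique because $\widehat C$ is not the Fermat curve (Yoshihara \cite{Yoshihara}), hence stabilised by the whole group together with its axis, which forces every automorphism into the stated shape $[X;\beta_2Y+\beta_3Z;\gamma_2Y+\gamma_3Z]$.

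The routine combinatorics of the normal form is not where the difficulty lies; once the generator $(1,-(d-1))$ is identified, the vanishing of the sets $S(2)^{j,X}_{m,(a,b)}$ and the shape of $S_2^{d,X}\,\,{m,(a,b)}$ are immediate. The delicate point is the \emph{moreover} dichotomy: excluding the triangle/descendant possibilities and, in the remaining case, justifying that the outer Galois point is unique so that it is genuinely fixed by all of $Aut(\widehat C)$. The value $\ell=2$ is exactly the threshold at which the Fermat-descendant branch survives, and handling that boundary---rather than the clean $\ell\geq 3$ argument inherited from Proposition \ref{prop32}---is the main obstacle I anticipate.
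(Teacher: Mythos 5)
Your proposal is correct and is exactly the argument the paper intends: the paper omits this proof, stating only that it ``can be obtained through similar techniques'' to Proposition \ref{prop32}, and your write-up carries out precisely that adaptation --- type $(4.1)$ of Theorem \ref{thm20} in place of $(5)$, generator $(1,-(d-1))$ with $b\equiv-(d-1)a$ absorbing the second congruence via $\ell d\mid d(d-2)$, the correct index sets $S(2)^{j,X}=\emptyset$ and $S_2^{d,X}=\{i\equiv 1\ (mod\ \ell)\}$, and the Mitchell/Harui/Yoshihara dichotomy run from the homology $\sigma^{\ell}=[\zeta_d^{-1}X;Y;Z]$ with center $P_1$ and axis $X=0$ instead of $P_2$ and $Y=0$. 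Your identification of $\ell=2$ as the only case where the Fermat-descendant branch survives (since $Aut(F_d)$ has elements of order at most $2d<\ell d$ for $\ell\geq3$, while $\ell d\nmid 3(d^2-3d+3)$ kills the Klein branch) matches the paper's reasoning in Proposition \ref{prop32} and its remark on $\ell=2$.
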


%
%
%\begin{proof}
%Clearly, $\ell d$ is not a divisor of $d-1,\,\,d,\,\,d^2-3d+3,\,\,(d-1)^2$ or $d(d-1)$. Therefore, $C_d$ is projectively equivalent to type $\ell d, (k,dk'+k)$ of Theorem \ref{thm20} $(4.1)$ ( $k$ and $dk'+k$ are coprime and $<\ell d$ and $\ell |k+k',k+(d-1)k'$) of the form
%\[
%X^d+\big(\sum_{j=2}^{d-1}\,\,X^{d-j}\sum_{i\in S(2)^{j,X}_{m,(a,b)}}\beta_{ji}Y^iZ^{j-i}\big)
%+\big(Y^{d-1}Z+\alpha YZ^{d-1}+\sum_{i\in S_2^{d,X}\,\,{m,(a,b)}}\beta_{di}Y^iZ^{d-i}\big)=0,
%\]
%We can take a generator $k=1$ and $k'=\ell-1$ since $[X;\zeta_{\ell d}Y;\zeta_{\ell d}^{(\ell-1)d+1}Z]^{k}=[X;\zeta_{\ell d}^{k}Y;\zeta_{\ell d}^{(\ell-1)dk+k}Z]=[X;\zeta_{\ell d}^{k}Y;\zeta_{\ell d}^{-dk+k}Z]=[X;\zeta_{\ell d}^{k}Y;\zeta_{\ell d}^{dk'+k}Z]$.
%The last equality because $\ell|k+k'$. Now, we have
%\begin{eqnarray*}
%S_2^{j,X}\,\,{\ell d,(1,(\ell-1)d+1)}&:=&\{i:\,\,0\leq i\leq j\,\,\text{and}\,\,i+(j-i)\big((\ell-1)d+1\big)=0\,mod\,\,\ell d\}\\
%&\subseteq&\{i:\,\,0\leq i\leq j\,\,\text{and}\,\,\ell d\,\,|j(d-1)+(d-2)i\}\\
%&=&\emptyset
%\end{eqnarray*}
%The last equality because $\ell d\,\,|j(d-1)+di$ implies that $d|j$ a contradiction since $0<j<d.$
%\begin{eqnarray*}
%S_2^{d,X}\,\,{\ell d,(1,(\ell-1)d+1)}&:=&\{i:\,\,2\leq i\leq d-2\,\,\text{and}\,\,i+(d-i)\big((\ell-1)d+1\big)=0\,mod\,\,\ell d\}\\
%&\subseteq&\{i:\,\,2\leq i\leq d-2\,\,\text{and}\,\,\ell\,\,|i-1\}\\
%&=&\{\ell m+1:\,\,\,m=1,2,...,\frac{d-4}{\ell}\}
%\end{eqnarray*}
%\end{proof}
\begin{rem} Unfortunately {it may happen here that different families of groups
 appear as the full automorphism of $\delta\in M_g^{Pl}(\Z/\ell d)$ depending on the specialization of the parameters.}
\end{rem}
\begin{cor} {The loci $M_g^{Pl}(\Z/\ell d)$ with $2\leq\ell\leq d-1$ and $d\geq 5$ are
empty or ES-irreducible.}
\end{cor}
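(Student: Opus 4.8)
The plan is to derive the statement directly from the Lemma preceding Proposition \ref{prop32} together with Proposition \ref{prop32} and the Proposition treating the congruence $d\equiv2\pmod{\ell}$, by checking that the three possible alternatives for a fixed pair $(\ell,d)$ each yield either emptiness or a single normal form. First I would invoke Corollary \ref{cor5}: for $2\le\ell\le d-1$ and $d\ge5$ one has $\ell d\nmid d-1,\,d,\,d^2-3d+3,\,(d-1)^2$, so $\ell d$ can only divide $d(d-1)$ or $d(d-2)$, equivalently $d\equiv1\pmod{\ell}$ or $d\equiv2\pmod{\ell}$. Since $\ell\ge2$, the two congruences cannot both hold, for $\ell\mid(d-1)$ and $\ell\mid(d-2)$ would force $\ell\mid1$; thus for each fixed $\ell$ exactly one of the following occurs: neither congruence holds and $M_g^{Pl}(\Z/\ell d)=\emptyset$ by the Lemma, or precisely one of them holds.

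Next I would appeal to the two classification results. In the case $d\equiv1\pmod{\ell}$ with $\ell\ge3$, Proposition \ref{prop32} supplies the biconditional that $\delta\in M_g^{Pl}(\Z/\ell d)$ if and only if $\delta$ is $K$-isomorphic to the single model $\tilde{C}$, with every admissible (non-singular) specialization of $\alpha\neq0$ and the $\beta_{\ell k,0}$ producing such a $\delta$ and conversely; this is exactly the assertion that the locus is given by a unique normal form up to $K$-isomorphism. Symmetrically, in the case $d\equiv2\pmod{\ell}$ the corresponding Proposition provides the single model $\widehat{C}$ with parameters $\alpha,\beta_{d,i}$, again identifying the locus with one normal form. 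Because the congruences are mutually exclusive, for a fixed $(\ell,d)$ exactly one of $\tilde{C}$, $\widehat{C}$ is used, so there is no ambiguity in the normal form attached to the locus, and ES-irreducibility follows in each nonempty case.

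The delicate point, and the only place requiring care, is the boundary value $\ell=2$ in the case $d\equiv1\pmod{\ell}$ (so $d$ odd), where $2d$ is attainable inside $Aut(F_d)$ and $\delta$ may be a descendant of the Fermat curve. Here the hard part is to separate the ES-irreducibility assertion from the structural description of $Aut(\delta)$: the reduction of any $\delta\in M_g^{Pl}(\Z/2d)$ to type $\ell d,(\ell k,dk')$ of form $(5)$ in Theorem \ref{thm20}, the normalization to the generator $k=k'=1$, and the computation of $S_1^{d,X}$, $S_1^{d-1,X}$ and $S(2)^{j,X}$ are purely arithmetic and use no hypothesis on the automorphism group, so they reproduce the normal form $\tilde{C}$ verbatim. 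The Fermat-descendant phenomenon noted in the Remark after Proposition \ref{prop32} affects only the claim that $Aut(\delta)$ fixes a line and a point off it, not the defining equation; hence the normal form persists and the locus remains ES-irreducible. Collecting the three alternatives then shows that $M_g^{Pl}(\Z/\ell d)$ is empty or given by a unique normal form for all $2\le\ell\le d-1$ and $d\ge5$, as claimed.
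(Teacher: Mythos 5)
Your proof is correct and follows essentially the route the paper intends: the corollary is an immediate assembly of the preceding Lemma (forcing $d\equiv1$ or $2\pmod{\ell}$, which you rightly note are mutually exclusive for $\ell\geq2$), Proposition \ref{prop32} for $d\equiv1\pmod{\ell}$, and the companion proposition for $d\equiv2\pmod{\ell}$, each of which exhibits a single normal form for the locus. Your handling of the boundary case $\ell=2$, $d$ odd is a genuine strengthening of what the paper makes explicit: the Remark after Proposition \ref{prop32} only asserts the proposition for $\ell=2$ under the non-Fermat-descendant hypothesis, and you correctly observe that the reduction to type $\ell d,(\ell k,dk')$ of form $(5)$ and the computation of $S_1^{d,X}$, $S_1^{d-1,X}$, $S(2)^{j,X}$ in the proof use only Theorem \ref{thm20} and arithmetic (with $\gcd(\ell,d)=1$ since $\ell\mid d-1$), never the structural dichotomy for $Aut(\delta)$, so the normal form --- and hence ES-irreducibility --- persists even for Fermat descendants (indeed, the Fermat curve itself, which for odd $d$ carries an element of order $2d$, arises as a specialization of $\tilde{C}$ with $\ell=2$).
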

{It is well known by \cite[Lemma 3.7]{Harui} that if $Aut(\delta)$
has a homology of period $d$ then $\delta$ has an outer Galois
point. Moreover if $\delta$ is isomorphic to the Fermat curve of
degree $d$, then it has two more outer Galois points and it is
unique otherwise \cite[Theorem 4' and Proposition 5']{Yoshihara}.
Furthermore we conclude the following:}

\begin{cor}\label{cor44}
{For any $\delta\in M_g^{Pl}(\Z/\ell d)$ with $3\leq\ell\leq d-1$,
$Aut(\delta)$ always contains a homology of period $d$. In
particular $\delta$ has an unique outer Galois point.}
\end{cor}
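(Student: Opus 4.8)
The plan is to reduce to the two congruence classes in which the locus can be non-empty and, in each of them, to exhibit an explicit homology of period $d$ inside $Aut(\delta)$; the assertion about the outer Galois point is then harvested from the general facts recalled just above, once the Fermat curve has been ruled out. First I would invoke the Lemma preceding Proposition \ref{prop32}, which tells us that $M_g^{Pl}(\Z/\ell d)$ is non-empty only when $d\equiv 1\ (mod\ \ell)$ or $d\equiv 2\ (mod\ \ell)$, so that only these two cases require attention.

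If $d\equiv 1\ (mod\ \ell)$, Proposition \ref{prop32} provides a model $\tilde{C}$ on which the cyclic group is generated by $\sigma=[X;\zeta_{\ell d}^{\ell}Y;\zeta_{\ell d}^{d}Z]$ (type $\ell d,(\ell,d)$ of the form $(5)$ in Theorem \ref{thm20}). Here $\sigma^{\ell}=[X;\zeta_{\ell d}^{\ell^2}Y;Z]$ is a homology with center $P_2$ and axis $Y=0$, of period $d$ because $\gcd(\ell,d)=1$ forces $\zeta_{\ell d}^{\ell^2}$ to be a primitive $d$-th root of unity. If instead $d\equiv 2\ (mod\ \ell)$, the companion Proposition gives a model $\widehat{C}$; by Corollary \ref{cor5} the group is of type $\ell d,(a,b)$ of the form $(4.1)$ in Theorem \ref{thm20}, and I would take the generator $\sigma=[X;\zeta_{\ell d}Y;\zeta_{\ell d}^{-(d-1)}Z]$, whose defining congruences $(d-1)a+b\equiv a+(d-1)b\equiv 0\ (mod\ \ell d)$ hold precisely because $\ell\mid d-2$ (this reduction to a single generator parallels the one in Proposition \ref{prop17}). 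A direct computation then gives
\[
\sigma^{\ell}=[X;\zeta_{\ell d}^{\ell}Y;\zeta_{\ell d}^{-\ell(d-1)}Z]=[X;\zeta_{\ell d}^{\ell}Y;\zeta_{\ell d}^{\ell}Z],
\]
which in $PGL_3(K)$ is a homology with center $P_1$ and axis $X=0$, of period $d$ since $\zeta_{\ell d}^{\ell}$ has exact order $d$. In either case the center of the homology ($P_2$, respectively $P_1$) lies off the curve, so $Aut(\delta)$ contains a homology of period $d$.

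With the homology in hand, I would apply Harui \cite[Lemma 3.7]{Harui} to deduce that $\delta$ has an outer Galois point, and then Yoshihara \cite[Theorem $4'$ and Proposition $5'$]{Yoshihara} to obtain uniqueness, except possibly when $\delta$ is $K$-isomorphic to the Fermat curve $F_d$. The remaining step is to discard this exception: since $\delta\in M_g^{Pl}(\Z/\ell d)$ it carries an automorphism of order $\ell d\geq 3d>2d$, while every element of $Aut(F_d)$ has order at most $2d$, so $\delta\not\cong F_d$ and the outer Galois point is unique. This last inequality is exactly where the hypothesis $\ell\geq 3$ is needed, since for $\ell=2$ the Fermat curve (with its three outer Galois points) is genuinely allowed. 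I expect the only delicate point to be the clean separation of the two congruence cases together with the verification that $\sigma^{\ell}$ has exact order $d$ in each; both are routine consequences of the hypotheses $d\equiv 1$ or $2\ (mod\ \ell)$.
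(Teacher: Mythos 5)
Your proposal is correct and takes essentially the same route as the paper: Corollary \ref{cor44} is harvested there exactly as you do it, from the homology $\sigma^{\ell}$ of period $d$ exhibited in the proof of Proposition \ref{prop32} (and its companion for $d\equiv2\,(mod\,\ell)$, where your generator $[X;\zeta_{\ell d}Y;\zeta_{\ell d}^{-(d-1)}Z]$ and the reduction to a single type match the paper's implicit ``similar techniques''), combined with the Harui--Yoshihara facts recalled just before the statement. Your exclusion of the Fermat curve via $\ell d\geq 3d>2d$ against the $2d$ bound on orders in $Aut(F_d)$ is precisely the paper's argument and correctly isolates why $\ell\geq3$ is needed.
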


%%%%%%%%%%%%%%%%%%%%%%%%%%%%%%%%%%%%%%%%%%%%%%%%%%%%%%%%%%%%%%%%%%

%%%%%%%%%%%%%%%%%%%%%%%%%%%%%%%%%%%%%%%%%%%%%%%%%%%%%%%%%%%%%%%%%%%%%%%%%%%%%%%%

%\include{l(d-2)moduliV2}
\subsection{On the loci $M_g^{Pl}(\Z/\ell(d-2)\Z)$}\mbox{}\\
{We investigate here the finite groups $G$ that contain cyclic
subgroups of order $\ell(d-2)$ and for which the locus $M_g^{Pl}(G)$
may be not empty. This question is completely solved when $d=4$ (see
\cite{He}) and $d=5$ (see \cite{BaBa3}) therefore we assume in this
part that $d\geq 6$ and also $\ell\geq2$.}
\begin{lem}\label{E1}
{The locus $M_g^{Pl}(\Z/\ell(d-2))$} with $d\geq 6$ {and $\ell\geq2$
is non-empty only if} $d\equiv0\,(mod\,\ell)$.
\end{lem}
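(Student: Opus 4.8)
The plan is to reduce the entire statement to the divisibility constraint already established in Corollary \ref{cor5}. Suppose $M_g^{Pl}(\Z/\ell(d-2))$ is non-empty; then $\delta$ admits a cyclic automorphism group of order $m:=\ell(d-2)$, and Corollary \ref{cor5} forces $m$ to divide one of the six integers $d-1$, $d$, $d^2-3d+3$, $(d-1)^2$, $d(d-2)$ or $d(d-1)$. So the whole proof amounts to checking which of these six $m$ can divide, and showing that the only viable one is $d(d-2)$, from which $\ell\mid d$ will follow at once.

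First I would dispose of the two small divisors by a crude size estimate: since $\ell\geq 2$ and $d\geq 6$ we have $m\geq 2(d-2)=2d-4$, and $2d-4>d$ and $2d-4>d-1$ hold for all $d\geq 6$. A positive divisor of $n$ never exceeds $n$, so $m$ can divide neither $d$ nor $d-1$. For the four larger divisors the strategy is instead to exploit that the factor $d-2$ of $m$ must itself divide each candidate, and since $d-2>1$ this is a genuine restriction. The cleanest way to test this is to reduce each candidate modulo $d-2$ (substituting $d\equiv 2$), which gives the congruences $d^2-3d+3\equiv 1$, $(d-1)^2\equiv 1$ and $d(d-1)\equiv 2\pmod{d-2}$.

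From the first two congruences I would conclude $\gcd(d-2,\,d^2-3d+3)=\gcd(d-2,\,(d-1)^2)=1$, so $(d-2)$ divides neither $d^2-3d+3$ nor $(d-1)^2$ once $d-2\geq 2$; hence $m$ divides neither. For $d(d-1)$ the congruence shows that $(d-2)\mid d(d-1)$ is equivalent to $(d-2)\mid 2$, i.e. $d\in\{3,4\}$, which is excluded by $d\geq 6$; thus $m\nmid d(d-1)$. The only surviving possibility is $m\mid d(d-2)$, and writing $m=\ell(d-2)$ this is exactly the condition $\ell\mid d$, that is $d\equiv 0\,(\mathrm{mod}\ \ell)$, which is the asserted necessary condition.

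I do not expect a real obstacle here, since the argument is purely arithmetic once Corollary \ref{cor5} is invoked; the statement is genuinely an elimination over a finite list. The only point demanding a little care is the \emph{systematic} treatment of the four larger divisors, and the residues modulo $d-2$ recorded above organize this cleanly, after which the two gcd computations, the $d(d-1)$ case, and the size comparison for $d-1$ and $d$ conclude the proof immediately.
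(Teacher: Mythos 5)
Your proof is correct and follows essentially the same route as the paper: both invoke Corollary \ref{cor5}, rule out $d-1$ and $d$ by the size bound $\ell(d-2)>d$, and eliminate $d^2-3d+3$, $(d-1)^2$ and $d(d-1)$ by showing $d-2$ cannot divide them, leaving only $\ell(d-2)\mid d(d-2)$, i.e.\ $\ell\mid d$. Your reduction modulo $d-2$ (residues $1$, $1$, $2$) is just a repackaging of the paper's explicit identities $d^2-3d+3=(d-1)(d-2)+1$, $(d-1)^2=d(d-2)+1$ and $d(d-1)=d(d-2)+d$, so the two arguments are the same in substance.
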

\begin{proof}
We have $d\geq6>2+\frac{2}{\ell-1}$ therefore $\ell(d-2)>d$ and $\ell(d-2)\nmid d-1$ or $d$. Also $(d-1)^2=d(d-2)+1, d(d-1)=d(d-2)+d$ and $d^2-3d+3=(d-1)(d-2)+1$ thus $\ell(d-2)\nmid(d-1)^2, d(d-1)$ or $d^2-3d+3$, since $(d-2)\nmid d$ or $1$. Now the result follows by Corollary \ref{cor5}.
\end{proof}

%\begin{cor}
%The loci $M_g^{Pl}(\Z/\ell(d-2)\Z)$ and $\widetilde{M_g^{Pl}(\Z/\ell(d-2)\Z)}$ are empty for any odd degree $d\geq6$ where $\ell|d$ such that $2|\ell$. In particular these loci are irreducible.
%\end{cor}
%\noindent Now we study the locus $M_g^{Pl}(\Z/\ell(d-2)\Z)$ for an arbitrary but a fixed even degree $d\geq6$ with $\ell|d$ such that $2|\ell$.
{We treat first the situation when $\ell$ is even:}
\begin{prop}\label{P1} {Suppose that $\ell\geq2$ is an even integer such that $\ell|d$ with $d\geq6$.
Any $\delta\in M_g^{Pl}(\Z/\ell(d-2)\Z)$ has a plane non-singular
model of the form}
\begin{equation}\label{(Eq.1)}
X^d+Y^{d-1}Z+\alpha
YZ^{d-1}+\sum_{k=1}^{\lfloor\frac{d}{2\ell}\rfloor}\,\,\beta_{2\ell
k,\ell k}X^{d-2\ell k}Y^{\ell k}Z^{\ell k}=0
\end{equation}
{In this case, the locus $M_g^{Pl}(\Z/\ell(d-2)\Z)$ is
ES-irreducible.}
\end{prop}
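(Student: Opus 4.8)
The plan is to run the same machinery used in Propositions \ref{prop17} and \ref{prop13}: locate the correct case of Theorem \ref{thm20}, pin down a convenient generator of the type, and then compute the relevant index sets. By Lemma \ref{E1} the locus is non-empty only when $d\equiv 0\pmod\ell$, and by Corollary \ref{cor5} together with the divisibility relations recorded in the proof of Lemma \ref{E1} (namely that $\ell(d-2)$ divides none of $d-1,\,d,\,d^2-3d+3,\,(d-1)^2,\,d(d-1)$), we are forced into the case $\ell(d-2)\mid d(d-2)$, i.e. case $(4.1)$ of Theorem \ref{thm20}. Thus any model of $\delta$ is $K$-equivalent to a type $m,(a,b)$ with $m=\ell(d-2)$ and $(a,b)\in\Gamma_m$ satisfying $(d-1)a+b\equiv 0$ and $a+(d-1)b\equiv 0\pmod m$.

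First I would solve these congruences. Substituting $b\equiv-(d-1)a$ from the first into the second gives $-a\,d(d-2)\equiv 0\pmod{\ell(d-2)}$, which reduces to $\ell\mid ad$; since $\ell\mid d$ this holds for every $a$. Hence every solution has the form $(a,-(d-1)a)\pmod m$, i.e. is a power of $(1,-(d-1))$, so by the standard generator reduction (cf. the footnote in the proof of Theorem \ref{thm20}) we may take $a=1$ and $b\equiv-(d-1)\equiv(\ell-1)(d-2)-1\pmod m$ as a generator; note $\gcd(1,b)=1$ and the associated diagonal automorphism has order exactly $m$.

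Next I would evaluate the index sets for this generator, writing $d=\ell t$. For the middle range $2\le j\le d-1$ the membership $i\in S(2)^{j,X}_{m,(a,b)}$ becomes $di-(d-1)j\equiv 0\pmod{\ell(d-2)}$. Reducing modulo $d-2$ forces $2i\equiv j$, so $2i-j\in\{-(d-2),0,d-2\}$; the two outer values pin $(i,j)$ to $(0,d-2)$ or $(d-2,d-2)$, for which the left-hand side equals $-(d-1)(d-2)$ or $d-2$, neither divisible by $\ell(d-2)$ once $\ell\ge 2$. The value $2i-j=0$ gives $j=2i$ and the condition $\ell\mid i$, i.e. exactly the monomials $X^{d-2\ell k}Y^{\ell k}Z^{\ell k}$. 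For the degree-$d$ part, $i\in S_2^{d,X}$ reduces to $t\,c\equiv 0\pmod{d-2}$ with $c=d-1-i\in[1,d-3]$; the key simplification is $\gcd(t,d-2)=\gcd(t,\ell t-2)=\gcd(t,2)$, so this set is empty when $t$ is odd and consists of the single index $i=d/2$ (the term $Y^{d/2}Z^{d/2}$) when $t$ is even.

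Finally I would substitute these sets into the normal form $(4.1)$; checking that the surviving middle-range exponents ($k\le\lfloor(d-1)/(2\ell)\rfloor$) together with the possible boundary term ($k=t/2$ when $t$ is even) are exactly indexed by $1\le k\le\lfloor d/(2\ell)\rfloor$ yields the stated equation. ES-irreducibility then follows as in Corollary \ref{cor101}: the forward direction just sketched shows every element of the locus is a specialization of this single normal form, while conversely the displayed curve carries the order-$\ell(d-2)$ homology-type automorphism $[X;\xi_m Y;\xi_m^{-(d-1)}Z]$ for every admissible choice of parameters, so each specialization giving a smooth curve lies in the locus. The main obstacle I anticipate is the bookkeeping of the degree-$d$ term: it is precisely the identity $\gcd(t,d-2)=\gcd(t,2)$ that makes $S_2^{d,X}$ contribute at most the single monomial $Y^{d/2}Z^{d/2}$, and one must verify carefully that the index bound $\lfloor d/(2\ell)\rfloor$ in the statement correctly absorbs this boundary case together with the genuine middle-degree terms.
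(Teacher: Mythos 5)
Your proof is correct, and it follows the same skeleton as the paper's own argument (Lemma \ref{E1} forces type $\ell(d-2),(a,b)$ of case $(4.1)$ of Theorem \ref{thm20}; pick a generator; compute $S(2)^{j,X}$ and $S_2^{d,X}$; substitute), but your execution differs in a way worth recording. The paper first replaces $\sigma$ by $\sigma^{\ell/2}$ to reduce to $\ell=2$, computes the index sets modulo $2(d-2)$ only up to supersets (its $S(2)^{d-2,X}\subseteq\{0,\frac{d-2}{2},d-2\}$ and $S_2^{d,X}\subseteq\{\frac{d}{2}\}$), and then in a second pass kills $\beta_{d-2,0},\beta_{d-2,d-2}$ and all $\beta_{j,j/2}$ with $2\ell\nmid j$ by imposing invariance under the full order-$\ell(d-2)$ automorphism --- a step stated very tersely for $\ell>2$. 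You instead work directly at modulus $\ell(d-2)$ with the generator $(1,-(d-1))$, which is precisely how the paper itself treats the odd-$\ell$ case in Proposition \ref{P2}; solving $b\equiv-(d-1)a$ from the first congruence and observing that the second reduces to $\ell\mid ad$, hence is automatic since $\ell\mid d$, is simpler and more transparent than the paper's parity parametrization $a=k+\frac{d-2}{2}k'$, $b=-k+\frac{d-2}{2}k'$. Your exact congruence checks then discard the boundary candidates $(i,j)=(0,d-2)$ and $(d-2,d-2)$ outright (the left-hand sides $-(d-1)(d-2)$ and $d-2$ are never divisible by $\ell(d-2)$ for $\ell\geq2$), and the identity $\gcd(t,d-2)=\gcd(t,2)$ decides the degree-$d$ term in one stroke, so the appearance of $Y^{d/2}Z^{d/2}$ exactly when $t=d/\ell$ is even is built into the computation rather than recovered afterwards; this is what makes the index bound $\lfloor d/(2\ell)\rfloor$ come out uniformly, and it effectively repairs the paper's compressed final sentence for even $\ell>2$. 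Two small remarks: the automorphism $[X;\xi_m Y;\xi_m^{-(d-1)}Z]$ used in your converse direction for ES-irreducibility is not a homology (both exponents are nonzero and $m\nmid d-1$), so ``homology-type'' is a misnomer, though the substance --- exact order $\ell(d-2)$ and invariance of every monomial of the normal form --- checks out; and your generator reduction implicitly uses that $H$ has order exactly $m$, which forces $\gcd(a,m)=1$ so that $(a,-(d-1)a)$ and $(1,-(d-1))$ generate the same cyclic subgroup --- worth one explicit sentence, but it is the same level of detail as the footnote in the proof of Theorem \ref{thm20}.
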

\begin{proof}
If $\delta\in M_g^{Pl}(\Z/\ell(d-2)\Z)$ {then} $\delta$ has an
automorphism $\sigma$ of order $\ell(d-2)$. Consequently
${\tau}:=\sigma^{\frac{\ell}{2}}{\in Aut(\delta)}$ {is of order
$2(d-2)$, that is} $\delta\in M_g^{Pl}(\Z/2(d-2)\Z)$. Therefore we
{need only to deal with} the case $\ell=2$. It follows by Lemma
\ref{E1} that {a non-singular plane model $C_{(a,b)}$ of $\delta$
should be} isomorphic to type $2(d-2), (a,b)$ {of the form $(4.1)$
of Theorem \ref{thm20} for some $(a,b)\in \Gamma_{2(d-2)}$ and
$2(d-1)|(d-1)a+b,\,a+(d-1)b$.
%$$C_{(a,b)}:\,X^d+\big(\sum_{j=2}^{d-1}\,\,X^{d-j}\sum_{i\in S(2)^{j,X}_{m,(a,b)}}\beta_{ji}Y^iZ^{j-i}\big)
%+\big(Y^{d-1}Z+\alpha YZ^{d-1}+\sum_{i\in S_2^{d,X}\,\,{m,(a,b)}}\beta_{di}Y^iZ^{d-i}\big)=0$$
Clearly $(1,d-3)\in\Gamma_{2(d-2)}$ is a solution of this system and $[X;\xi_{2(d-2)}Y;\xi_{2(d-2)}^{d-3}Z]\in Aut(C_{(1,d-3)})$. On the other hand, $2|a-b$ and $d-2|a+b$, in particular $a=k+(\frac{d-2}{2})k'$ and $b=-k+(\frac{d-2}{2})k'$ for some integers $k$ and $k'$ and we get $2|\pm k+(\frac{d}{2})k'$. Consequently $[X;\xi_{2(d-2)}Y;\xi_{2(d-2)}^{d-3}Z]^{k+(\frac{d-2}{2})k'}=[X;\xi_{2(d-2)}^aY;\xi_{2(d-2)}^{b}Z]$ and $m=2(d-2), a=1$ and $b=d-3$ is a generator of the set of solution of our system. Furthermore the associated sets $S_2^{d,X}$ and $S(2)^{j,X}$ for $j=2,...,d-1$ are  computed as follows:}
%$$\xi_{2(d-2)}^{(d-3)(k+(\frac{d-2}{2})k')}=\xi_{2(d-2)}^{b+(d-2)k+(d-4)(\frac{d-2}{2})k'}=\xi_{2(d-2)}^{b}\xi_{2(d-2)}^{(d-2)k+d(\frac{d-2}{2})k'}
%=\xi_{2(d-2)}^{b}\xi_{2(d-2)}^{(d-2)(k+\frac{d}{2}k')}=\xi_{2(d-2)}^{b}$$
%Therefore
\begin{eqnarray*}
{S_2^{d,X}}&:=&\{i:\,\,2\leq i\leq d-2\,\,\text{and}\,\,2(d-2)|i+(d-i)(d-3)\}\\
&\subseteq&\{i:\,\,2\leq i\leq d-2\,\,\text{and}\,\,(d-2)|2(i-1)\}\\
&=&\{\frac{d}{2}\}
\end{eqnarray*}
since {$0<2(i-1)<2(d-2)$} therefore $2(i-1)=d-2.$ {Also} we have
\begin{eqnarray*}
S(2)^{j,X}_{m,(a,b)}&:=&\{i:\,\,0\leq i\leq j\,\,\text{and}\,\,2(d-2)|i+(j-i)(d-3)\}\\
&\subseteq&\{i:\,\,0\leq i\leq j\,\,\text{and}\,\,(d-2)|j-2i\},
\end{eqnarray*}
{But $|j-2i|\leq d-1$ therefore $j-2i=0$ or $\pm(d-2)$}. In
particular, $S(2)^{j,X}=\emptyset$ if $j$ is odd and
$\{\frac{j}{2},\frac{j\pm(d-2)}{2}\}$ if $j$ is even. {Moreover}
$0\leq i\leq j$ {thus when $j$ is even and $<d-2$},
${S(2)^{j,X}}=\{\frac{j}{2}\}$ and when $j=d-2$,
${S(2)^{d-2,X}=}\{0,\frac{d-2}{2},d-2\}$. Consequently, we obtain
the form
$$
X^d+Y^{d-1}Z+\alpha YZ^{d-1}+X^{2}\big(\beta_{d-2,0}Z^{d-2}+\beta_{0,d-2}Y^{d-2}\big)+\sum_{j=2,4,...,d-2,d}\,\,\beta_{j,\frac{j}{2}}X^{d-j}Y^{\frac{j}{2}}Z^{\frac{j}{2}}=0
$$
Because $[X;\xi_{\ell(d-2)}Y;\xi_{\ell(d-2)}^{d-3}Z]\in
Aut(C_{1,d-3})$ {hence} $\beta_{d-2,0}=\beta_{0,d-2}=0$ moreover
$\beta_{j,(\frac{j}{2})}=0$ if $2\ell\nmid j$. To deal $\ell>2$ even
one obtain the result y impose that the automorphism associated to
Type $\ell, (a,b)$ leaves invariant the equation.
\end{proof}
%\noindent Now, we determine the locus $\widetilde{M_g^{Pl}(G)}$ such that $\Z/\ell(d-2)\Z\preceq G$ or equivalently,
%investigating the full automorphism groups of curves which are defined by (\ref{(Eq.1)}).
\begin{prop}\label{P3}
{Let $\ell\geq2$ be an even integer such that $\ell|d$ with $d\geq6$
and let $G$ be a finite group inside $PGL_3(K)$. Then $\delta\in
M_g^{Pl}(\Z/\ell(d-2)\Z)\cap \widetilde{M_g^{Pl}(G)}$} only if one
of the following situations occurs:
\begin{enumerate}
  \item $d=6$ and $G$ is conjugate to a central extension of $S_4$ by $\Z/6\Z$.
  {In this case,} $G$ is of order $144$ and $\widetilde{M_g^{Pl}(G)}$ is {an} irreducible set {that is} given by one
  element which has a plane non-singular model of the form
$X^6+Y^{5}Z+YZ^{5}=0$.
  \item $d>6$ and $G$ is conjugate to $<\sigma,\tau|\tau^2=\sigma^{d(d-2)}=1,\,\tau\sigma\tau=\sigma^{-(d-1)}>$,
  a central extension of order $2d(d-2)$ of $D_{2(d-2)}$ by $\Z/d\Z$. {Also} $\widetilde{M_g^{Pl}(G)}$
  is an irreducible set {and is} given by one element {with a non-singular plane} model isomorphic to $X^d+Y^{d-1}Z+YZ^{d-1}=0$.
\item $d=6$ and $G$ is isomorphic to $SmallGroup(16,8)$ {in GAP library}. Furthermore any element of {$\widetilde{M_{10}^{Pl}(SmallGroup(16,8))}$
has a non-singular plane model, $K$-isomorphic, to}
$X^{6}+Y^{5}Z+YZ^{5}+{\beta_{4,2}}X^{2} Y^{2} Z^{2}=0$ for certain
${\beta_{4,2}}\neq 0$.
\item $d=10$ and $G$ is isomorphic to $SmallGroup(32,19)$ {in GAP library. Similarly $\widetilde{M_{36}^{Pl}(SmallGroup(32,19))}$
consists of a curves which has a non-singular plane model (up to
$K$-equivalence) of the form}
$X^{10}+Y^{9}Z+YZ^{9}+{\beta_{6,4}}X^{6} Y^{2}
Z^{2}+{\beta_{2,8}}X^{2} Y^{4} Z^{4}=0$ with
$(\beta_{6,4},\beta_{2,8})\neq (0,0)$.
\item $d\neq6, 10$ and $G$ is an element ${Ext^1(N,D_{2(d-2)})}$ {where $N$ is} a cyclic group of order {$2r(|d)$}. Moreover $G$
 {contains} $<\sigma,\tau:\,\tau^2=\sigma^{\ell(d-2)}=1\,\textit{and}\,\tau\sigma\tau=\sigma^{-(d-1)}>$ {as a subgroup.
 Also every element of $\widetilde{M_g^{Pl}(G)}$} has a non-singular
plane model of the form $(\ref{(Eq.1)})$ of Proposition \ref{P1}
such that $\beta_{2k\ell, \ell,k}\neq0$ for some
$k\in\{1,...,\lfloor\frac{d}{2\ell}\rfloor\}$.
\end{enumerate}
\end{prop}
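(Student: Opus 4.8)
The plan is to start from the normal form provided by Proposition \ref{P1}. After normalizing $\alpha=1$ by a diagonal change of coordinates, the model $C$ carries two visible automorphisms: the diagonal element $\sigma$ of order $\ell(d-2)$ coming from the type $\ell(d-2),(a,b)$ (with $(d-1)a+b\equiv 0$ and $a+(d-1)b\equiv 0$), and the involution $\tau=[X;Z;Y]$ interchanging $Y$ and $Z$, which fixes every monomial of the normal form. First I would record that $\langle\sigma,\tau\rangle$ is a subgroup $H\leq G:=Aut(\delta)$ and identify its isomorphism type: conjugating the diagonal $\sigma$ by $\tau$ swaps its two nontrivial eigenvalues, and the congruences defining the type give exactly $\tau\sigma\tau=\sigma^{-(d-1)}$. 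Hence $H\cong\langle\sigma,\tau\mid\sigma^{\ell(d-2)}=\tau^2=1,\ \tau\sigma\tau=\sigma^{-(d-1)}\rangle$, a central extension of $D_{2(d-2)}$ by a cyclic group; in particular $G$ is never abelian. This already shows that $H$ sits inside every $G$ we must classify.

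Next I would invoke Harui's Theorem \ref{teoHarui} for $G$ and discard the inapplicable alternatives. The cyclic alternative (1) is impossible since $H$ is non-abelian. The Klein-descendant alternative (4) is impossible because it would force $\ell(d-2)\mid 3(d^2-3d+3)$, whereas $\gcd(d-2,d^2-3d+3)=1$ for $d\geq 6$ (as already used in Lemma \ref{E1}), so $(d-2)\nmid 3(d^2-3d+3)$. The primitive alternative (5) involves only finitely many groups of bounded order, so an element of order $\ell(d-2)\geq 2(d-2)\geq 8$ restricts $d$ to a short finite list, which I would check by hand and find to yield nothing new. This leaves two genuine alternatives: $G$ is a descendant of the Fermat curve $F_d$, or $G$ fixes a point off $C$ (Harui (2)).

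The main branch is the fixes-point-off-$C$ case, giving a short exact sequence $1\to N\to G\to G'\to 1$ with $N$ cyclic of order dividing $d$ and $G'\leq PGL_2(K)$. The image of $H$ shows that $G'$ contains a dihedral group $D_{2(d-2)}$; by the classification of finite subgroups of $PGL_2(K)$ together with Harui's constraint that a dihedral quotient $D_{2m}$ satisfies $m\mid(d-2)$, one gets $G'=D_{2(d-2)}$ for all $d>6$, while the exceptional embedding $D_8\subset S_4$ allows $G'=S_4$ only at $d=6$. I would then split according to whether all the coefficients $\beta_{2\ell k,\ell k}$ vanish. If they do, $C$ is the curve $X^d+Y^{d-1}Z+YZ^{d-1}$ and Proposition \ref{prop15} identifies $G$ as the central extension of $S_4$ (for $d=6$) or of $D_{2(d-2)}$ (for $d>6$) by $\Z/d$, giving cases (1) and (2). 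If some $\beta\neq 0$, the diagonal subgroup of $G$ is cut down by the extra invariance condition $\ell k(s+t)\equiv 0$, so $N$ shrinks to a cyclic group of order $2r\mid d$ and $G$ becomes an extension of $D_{2(d-2)}$ by this $N$ containing $H$, which is case (5).

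The delicate point, and the step I expect to be the main obstacle, is isolating the two exceptional degrees $d=6$ and $d=10$ in cases (3) and (4) and pinning down their groups in the GAP library. Here $\ell=2$ and the presentation of $H$ specializes to the semidihedral group of order $4(d-2)$, namely SmallGroup$(16,8)$ for $d=6$ and SmallGroup$(32,19)$ for $d=10$; the work is to prove that for these degrees a generic choice of the surviving $\beta$'s admits no automorphism beyond $H$ (so $N$ collapses to $\Z/2$ and $G=H$ exactly), to confirm that these are the only degrees where the $\ell=2$ extension fails to enlarge, and to rule out or absorb the Fermat-descendant alternative by the same order and coordinate-change computations. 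Carrying this out amounts to solving, for each candidate $P\in PGL_3(K)$ normalizing $H$, the linear constraints imposed on the $\beta_{2\ell k,\ell k}$ by $P^{-1}CP=C$, and reading off exactly when extra solutions force the larger groups of cases (1)--(2) versus the bare semidihedral groups of (3)--(4); this finite but intricate bookkeeping is where the real effort lies.
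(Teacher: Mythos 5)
Your skeleton coincides with the paper's proof: the normal form of Proposition \ref{P1}, the subgroup $H=\langle\sigma,\tau\rangle$ with $\tau\sigma\tau=\sigma^{-(d-1)}$, Harui's trichotomy eliminating the cyclic, Klein and primitive alternatives, the exact sequence $1\to N\to Aut(C)\to\rho(Aut(C))\to 1$, the dichotomy on the $\beta$'s (all zero giving cases (1)--(2) via Proposition \ref{prop15}), and direct computation at the exceptional degrees. The genuine gap is the step where you assert that ``$G'=D_{2(d-2)}$ for all $d>6$, while $D_8\subset S_4$ allows $G'=S_4$ only at $d=6$'': this is both unjustified and inconsistent with your own later treatment of $d=10$ as exceptional. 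Containing a dihedral subgroup does not force a subgroup of $PGL_2(K)$ to be dihedral ($S_4\supset D_8$, $A_5\supset D_{10}$, $A_4\supset D_4$), and Harui's constraint $m\mid(d-2)$ only restricts the dihedral alternative; the groups $A_4,S_4,A_5$ can only be excluded by element orders in the image. Here $\rho(\sigma)\sim\mathrm{diag}\big(1,\xi_{2(d-2)}^{\,d-4}\big)$ has order $2(d-2)/\gcd(2(d-2),d-4)$, which is $d-2$ when $d\equiv 2\pmod 4$ and $(d-2)/2$ when $4\mid d$ --- in particular your claim that the image of $H$ contains $D_{2(d-2)}$ is false when $4\mid d$: then $\sigma^{(d-2)/2}$ acts as a scalar on the $(Y,Z)$-plane, $\ker\rho\cap H$ has order $4$, and $\rho(H)\cong D_{d-2}$. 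This order computation is precisely what isolates the degrees where the image could be $A_4$, $S_4$ or $A_5$ and which therefore need the separate head-on computation of $Aut(C)$ from the equation; replacing it by the blanket claim ``dihedral for $d>6$'' leaves cases (3)--(5) unseparated.

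Two further points. First, for cases (3) and (4) it is not enough to handle a \emph{generic} choice of the surviving $\beta$'s: since the proposition is an exhaustive ``only if'' list, you must show $Aut(C)\cong SmallGroup(16,8)$ (resp. $SmallGroup(32,19)$) for \emph{every} nonzero specialization, or a special $\beta\neq0$ with a larger group would escape your classification. The paper achieves this uniformly: at $d=6$ the absence of the monomials $X^2Y^4$ and $X^2Z^4$ forces every automorphism to have the shape $[X;\beta_2Y;\gamma_3Z]$ or $[X;\beta_3Z;\gamma_2Y]$, and the coefficient relations $\beta_2^5\gamma_3=\beta_2\gamma_3^5=\beta_2^2\gamma_3^2=1$ yield exactly $16$ solutions independently of $\beta$ (similarly $32$ at $d=10$). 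Second, the Fermat-descendant alternative cannot be deferred to $d\in\{6,10\}$: for $\ell=2$ it is available at every even degree and must be excluded globally before the exact-sequence analysis applies. Be careful that a bare order-divisibility test is not decisive here (for instance $4(d-2)=24$ does divide $6d^2=384$ at $d=8$); the robust exclusion is by element orders in $Aut(F_d)$, where diagonal elements have order dividing $d$, transposition-type elements have order $2k$ with $k\mid d$, and triple-cycle-type elements have order $3$, so an element of order $2(d-2)$ exists only if $(d-2)\mid d$, i.e. $d\leq 4$.
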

\begin{proof}
It is sufficient, by Proposition \ref{P1}, to consider non-singular
plane curves that is {defined} by equation (\ref{(Eq.1)}). {First,}
assume that $\beta_{2\ell k,\ell k}=0$ for all
$k=1,...,\lfloor\frac{d}{2\ell}\rfloor$, {thus} elements of
$\widetilde{M_g^{Pl}(G)}$ have a plane model which is isomorphic to
{the form} $X^d+Y^{d-1}Z+\alpha YZ^{d-1}=0.$ The {full} automorphism
group in such case is well known {by Proposition \ref{prop15}. This
proves $(1)$ and $(2)$.}

Secondly, {suppose} that $\beta_{2\ell j,\ell j}\neq0$ for some
$j\in\{1,...,\lfloor\frac{d}{2\ell}\rfloor\}$. {It is to be noted
that the form} (\ref{(Eq.1)}) of Proposition \ref{P1} {always
admits} a a bigger automorphism group {namely,}
$G_{0}:=<\sigma,\tau>$ of order $2\ell(d-2)$ where
$\sigma:=[X;\xi_{\ell(d-2)}Y;\xi_{\ell(d-2)}^{d-3}Z]$ and
$\tau:=[X;\mu Z;\mu^{-1}Y]$ with $\mu^{d-2}=\alpha.$ Consequently
$Aut(C)$ is not cyclic{, since $G_0$ does being isomorphic to}
$<\sigma,\tau|\tau^2=\sigma^{\ell(d-2)}=1,\,
and\,\tau\sigma\tau=\sigma^{-(d-1)}>$. Also $C$ is not a descendant
of the Klein curve $K_d$ because {$|G_0|\nmid 3(d^2-3d+3)$}.
Moreover $Aut(C)$ is not conjugate to any of the finite primitive
subgroups of $PGL_3(K)$, {since} $\ell(d-2)\geq 8$ and non of these
groups contains elements of order $>7$ (in fact, the Klein group
$PSL(2,7)$ is the only primitive group in $PGL_3(K)$ with elements
of order 7). {On the other hand,} $C$ is not a descendant of the
Fermat curve, since $\ell(d-2)>2d$ for all $\ell>2$ and elements of
$Aut(F_d)$ have orders at most $2d$ also for $\ell=2$,
$4(d-2)\nmid6d^2$ because $d\geq6$ and is even.
\par Now it follows by the above argument that $Aut(C)$ should fix a line and a point off that line
{where the fixed point} does not belong to $C$. {But we have
$\sigma,\tau\in Aut(C)$ therefore the line must be $X=0$ and the
point is $P_1$. In particular,} automorphisms of $C$ are of the form
$[X;\beta'_2Y+\beta'_3Z;\gamma'_2Y+\gamma'_3Z]$ {and we can think
about $Aut(C)$ in} a short exact sequence $1\rightarrow N\rightarrow
Aut(C)\rightarrow \rho(Aut(C))\rightarrow 1$ with
$N=<diag(\xi_d^{r'};1;1)>$ a cyclic group of order dividing $d$,
$\rho(Aut(C))$ is conjugate to a cyclic group ${\Z/m\Z}$ of order
$m\leq d-1$, a Dihedral group $D_{2m}$ where $m|(d-2)$ ({recall
that} $diag(-1;1;1)\in N$), the alternating groups $A_4$, $A_5$ or
the permutation group $S_4$ and {$\rho:PBD(2,1)\hookrightarrow
PGL_2(K)$ is the canonical map where $PBD(2,1)$ is the subgroup of
$PGL_3(K)$ that all the entries in the third column and third row
are zero except the one in the diagonal which has value 1. It
suffices to consider the case $\ell=2$, since}
$M_g^{Pl}(\Z/\ell(d-2)\Z)\subseteq M_g^{Pl}(\Z/2(d-2)\Z)$. {Hence}
$\rho(Aut(C))$ contains {the} element $\rho(\tau)=\left(
  \begin{array}{cc}
0 & \mu \\
 \mu^{-1} & 0 \\
\end{array}
\right)$ of order $2$ and {the} element $\rho(\sigma)=\left(
 \begin{array}{cc}
 1 & 0 \\
 0 & \xi_{2(d-2)}^{d-4} \\
 \end{array}
  \right)$ of order {$d-2$ (only if $4\nmid d-2$) and $\frac{d-2}{2}$ (otherwise) therefore
  $\rho(Aut(C))$ always} contains a dihedral subgroup {and then it} is not conjugate to a cyclic group {$\Z/m\Z$}. {Now if $4\nmid d-2$
  (resp. $4|d-2$ and $d\neq6,10$) then $\rho(Aut(C))$ has elements of order $>5$. In particular, it is not conjugate to any of the
  groups $A_4, S_4$ or $A_5$. Thus $\rho(Aut(C))$ is conjugate to $D_{2(d-2)}$ but also we have $4(d-2)||Aut(C)|$ therefore $2||N|$
  and the case $(5)$ is proved.} It remains now to {determine the full automorphism group} when $d=6$ or $10${:}
\par For $d=6$, {the equation $(\ref{(Eq.1)})$ in Proposition \ref{prop111} become} $X^{6}+Y^{5}Z+YZ^{5}+{\beta_{2,4}}X^{2} Y^{2} Z^{2}=0$
with $\beta_{2,4}\neq 0$. {Let $\eta\in Aut(C)$ then $\eta$ is of the form $[X;\beta_2Y;\gamma_3Z]$ or
 $[X;\beta_3Z;\gamma_2Y]$, since} the monomials $X^2Y^4$ and $X^2Z^4$ {are not in the defining equation of $C$}.
  Hence {we must have} $\beta_2^5\gamma_3=\beta_2\gamma_3^5=\beta_2^2\gamma_3^2=1,$ which in turns implies that $|Aut(C)|=16$.
   Therefore $Aut(C)$ is conjugate to $<\sigma,\tau|\tau^2=\sigma^{8}=1\, and\,\tau\sigma\tau=\sigma^{3}>$ with
    $\sigma:=[X;\xi_8Y;\xi_8^3Z]$ and $\tau:=[X;Z;Y]$ which is $SmallGroup(16,8)$ in Gap list. By a {quite} similar argument, {one}
     conclude that {when} $d=10$, the plane non-singular model is reduced to
     $X^{10}+Y^{9}Z+YZ^{9}+{\beta_{6,4}}X^{6} Y^{2} Z^{2}+{\beta_{2,8}}X^{2} Y^{4} Z^{4}$ with $(\beta_{6,4},\beta_{2,8})\neq (0,0)$.
     {Also} $|Aut(C)|=32$ where $Aut(C)=<\sigma,\tau>$ with $\tau:=[X;Z;Y]$ and $\sigma:=[X;\xi_{16}Y;\xi_{16}^{-9}Z]$
     and hence $Aut(C)$ is isomorphic to $SmallGroup(32,19)$.\\
This completes the proof.
\end{proof}

\begin{cor} {The locus $\widetilde{M_g^{Pl}(\Z/\ell(d-2)\Z)}$ is always empty for any even integer $\ell\geq2$.}
\end{cor}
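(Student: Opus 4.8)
The plan is to observe that $\widetilde{M_g^{Pl}(\Z/\ell(d-2)\Z)}\subseteq M_g^{Pl}(\Z/\ell(d-2)\Z)$, so it is enough to prove that \emph{no} $\delta\in M_g^{Pl}(\Z/\ell(d-2)\Z)$ has a cyclic full automorphism group. Under the standing hypotheses of this subsection ($d\geq 6$, $\ell\geq 2$ even and $\ell\mid d$ by Lemma \ref{E1}), Proposition \ref{P1} tells us that every such $\delta$ has a non-singular plane model $C$ of the normal form $(\ref{(Eq.1)})$, on which the prescribed cyclic element of order $\ell(d-2)$ acts, after diagonalisation, as $\sigma=[X;\xi_{\ell(d-2)}^{a}Y;\xi_{\ell(d-2)}^{b}Z]$ for suitable exponents $a,b$. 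I would then exhibit one extra automorphism of $C$ that does not commute with $\sigma$, forcing $Aut(\delta)$ to be non-abelian and hence non-cyclic.

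The extra symmetry is the involution $\tau=[X;\mu Z;\mu^{-1}Y]$, where $\mu\in K$ satisfies $\mu^{d-2}=\alpha$ (such $\mu$ exists because $K$ is algebraically closed and $\alpha\neq0$, the latter being forced by the non-singularity of $(\ref{(Eq.1)})$). First I would check $\tau\in Aut(C)$: it fixes $X^d$, it sends each term $X^{d-2\ell k}Y^{\ell k}Z^{\ell k}$ to itself because the scalars $\mu^{\ell k}$ and $\mu^{-\ell k}$ cancel, and it interchanges the two remaining terms $Y^{d-1}Z$ and $\alpha YZ^{d-1}$ precisely because $\mu^{d-2}=\alpha$. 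Since $\tau$ transposes the second and third coordinate axes while $\sigma$ is diagonal, a one-line computation gives $\tau\sigma\tau^{-1}=[X;\xi_{\ell(d-2)}^{b}Y;\xi_{\ell(d-2)}^{a}Z]$, so $\tau$ and $\sigma$ commute if and only if $a\equiv b\pmod{\ell(d-2)}$. The key point is that this congruence fails for $d\geq6$: the power $\sigma^{\ell/2}$ is the order-$2(d-2)$ element used to normalise $(\ref{(Eq.1)})$ in Proposition \ref{P1}, namely $[X;\xi_{2(d-2)}Y;\xi_{2(d-2)}^{d-3}Z]$, whose $Y$- and $Z$-eigenvalues already differ (as $d-3\not\equiv1\pmod{2(d-2)}$ when $d\geq6$); hence $a\not\equiv b\pmod{2(d-2)}$ and a fortiori $a\not\equiv b\pmod{\ell(d-2)}$.

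It follows that $\langle\sigma,\tau\rangle$ is a non-abelian subgroup of $Aut(\delta)$, so $Aut(\delta)$ cannot be isomorphic to the cyclic group $\Z/\ell(d-2)\Z$; thus $\delta\notin\widetilde{M_g^{Pl}(\Z/\ell(d-2)\Z)}$, and since $\delta$ was arbitrary the locus is empty. Equivalently, one may simply read the conclusion off Proposition \ref{P3}: each full automorphism group listed there in cases $(1)$--$(5)$, namely the central extensions of $S_4$ and of $D_{2(d-2)}$, the groups $SmallGroup(16,8)$ and $SmallGroup(32,19)$, and the dihedral-containing extensions in $Ext^1(N,D_{2(d-2)})$, is non-cyclic. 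I do not anticipate a genuine obstacle here: the statement is essentially a corollary of the normal-form analysis already carried out. The only points demanding a little care are the uniformity of the argument in $\ell$ (handled by passing to $\sigma^{\ell/2}$, exactly as in Proposition \ref{P1}) and the exceptional degrees $d=6,10$, but these already occur in Proposition \ref{P3} with visibly non-cyclic groups.
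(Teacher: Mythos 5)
Your proposal is correct and follows essentially the same route as the paper: the corollary is there read off from Proposition \ref{P3}, whose proof contains exactly your key step, namely that every curve of the normal form (\ref{(Eq.1)}) admits the extra involution $\tau=[X;\mu Z;\mu^{-1}Y]$ with $\mu^{d-2}=\alpha$, so that $G_0=\langle\sigma,\tau\rangle$ is a non-abelian subgroup of order $2\ell(d-2)$ and $Aut(\delta)$ can never be cyclic of order $\ell(d-2)$. The only cosmetic difference is that you certify $a\not\equiv b$ by passing to $\sigma^{\ell/2}$, whereas the paper records the same non-commutativity as the relation $\tau\sigma\tau=\sigma^{-(d-1)}\neq\sigma$.
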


%\noindent In what follows, we study the loci $M_g^{Pl}(\Z/\ell(d-2)\Z)$ for any arbitrary but a fixed degree $d\geq6$ such that $2\nmid \ell$.
{Now we treat the situation for which $\ell$ is odd:}
\begin{prop}\label{P2}
{Suppose that $\ell\geq2$ is an odd integer such that $\ell|d$ with
$d\geq6$. Any non-singular} plane model of $\delta\in
M_g^{Pl}(\Z/\ell(d-2)\Z)$ {is $K$-isomorphic to the form}
\begin{equation}\label{(Eq.2)}
X^d+Y^{d-1}Z+\alpha YZ^{d-1}+\sum_{k=1}^{n}\,\,\beta_{2\ell k,\ell
k}X^{d-2\ell k}Y^{\ell k}Z^{\ell k}=0,
\end{equation}
{where $n=\frac{d}{2\ell}$ if $d$ is even and
$\lfloor\frac{d-1}{2\ell}\rfloor$ otherwise.} In particular, the
loci $M_g^{Pl}(\Z/\ell(d-2)\Z)$ are ES-irreducible.
\end{prop}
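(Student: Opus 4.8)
The plan is to follow Proposition \ref{P1}, with one essential change: since $\ell$ is odd, the cyclic group $\langle\sigma\rangle\cong\Z/\ell(d-2)\Z$ contains no element of order $2(d-2)$ (that would require $2\mid\ell$), so I cannot reduce to the case $\ell=2$ as in Proposition \ref{P1} and must run the monomial analysis directly at level $\ell(d-2)$. First, if $\delta\in M_g^{Pl}(\Z/\ell(d-2)\Z)$ it carries an automorphism $\sigma$ of order $\ell(d-2)$, and by Lemma \ref{E1} (via Corollary \ref{cor5}) the integer $\ell(d-2)$ divides $d(d-2)$ but none of $d-1,\,d,\,(d-1)^2,\,d(d-1),\,d^2-3d+3$. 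In particular $\ell(d-2)\nmid d-1,d$, so $\sigma$ is not a homology and, by Theorem \ref{thm20}, a suitable plane model of $\delta$ is of type $\ell(d-2),(a,b)$ of the form $(4.1)$, with $(a,b)\in\Gamma_{\ell(d-2)}$ satisfying $(d-1)a+b\equiv0$ and $a+(d-1)b\equiv0\pmod{\ell(d-2)}$.

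Next I would isolate a standard generator. The first congruence gives $b\equiv-(d-1)a$; plugging this into the second yields $-a\,d(d-2)\equiv0\pmod{\ell(d-2)}$, i.e.\ $\ell\mid ad$, which is automatic because $\ell\mid d$. Thus every solution is an integer multiple of $(1,-(d-1))$, so $diag(1,\xi^{a},\xi^{b})$ is a power of $diag(1,\xi,\xi^{-(d-1)})$ where $\xi:=\xi_{\ell(d-2)}$; since $\sigma$ has order $\ell(d-2)$, the two diagonal matrices generate the same cyclic group and I may take the generator to be $\ell(d-2),(1,-(d-1))$.

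Then I would read off the invariant monomials. Writing a degree-$d$ monomial as $X^{d-j}Y^{i}Z^{j-i}$ and setting $s:=j-i$, invariance under $diag(1,\xi,\xi^{-(d-1)})$ amounts to
\[
\ell(d-2)\mid di-(d-1)j=i-s(d-1).
\]
Reduction modulo $d-2$ forces $i\equiv s\pmod{d-2}$. When $i=s$ (so $j=2i$ is even) the condition becomes $\ell\mid i$, giving exactly the terms $X^{d-2\ell k}Y^{\ell k}Z^{\ell k}$ (with $i=s=0$ returning $X^d$). When $i\neq s$, $|i-s|$ is a positive multiple of $d-2$, which together with $i+s=j\leq d$ forces $j\in\{d-2,d\}$: the $j=d-2$ candidates $X^2Y^{d-2}$ and $X^2Z^{d-2}$ fail, since they would need $\ell\mid1$ and $\ell\mid d-1$ respectively, both impossible for odd $\ell\geq3$ with $\ell\mid d$, while the $j=d$ survivors are precisely $Y^{d-1}Z$ and $YZ^{d-1}$. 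Finally $2\ell k\leq d$ bounds $k$ by $d/(2\ell)$, the extremal value $2\ell k=d$ (the term $Y^{d/2}Z^{d/2}$) occurring iff $d$ is even; this gives $n=d/(2\ell)$ for $d$ even and $n=\lfloor(d-1)/(2\ell)\rfloor$ for $d$ odd, hence the normal form $(\ref{(Eq.2)})$.

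For the converse and the ES-irreducibility claim I would just verify that $(\ref{(Eq.2)})$ is stabilised by $diag(1,\xi,\xi^{-(d-1)})$, an element of order $\ell(d-2)$, so every non-singular specialisation lies in $M_g^{Pl}(\Z/\ell(d-2)\Z)$; the full locus being then cut out by the single normal form $(\ref{(Eq.2)})$ up to $K$-isomorphism, it is ES-irreducible. I expect the only delicate points to be the boundary analysis discarding the $j=d-2$ monomials and the parity bookkeeping yielding the two expressions for $n$; the remaining steps are routine congruence checks.
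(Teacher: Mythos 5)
Your proposal is correct and follows essentially the same route as the paper: reduce to type $\ell(d-2),(a,b)$ of form $(4.1)$ of Theorem \ref{thm20} via Lemma \ref{E1}, normalize the generator to $(1,-(d-1))$ (the paper's $b=(\ell-1)(d-2)-1$ is the same residue mod $\ell(d-2)$), and sieve the surviving monomials through the congruence $\ell(d-2)\mid i-(j-i)(d-1)$ reduced mod $d-2$, with the same boundary checks at $j=d-2$ and $j=d$ and the same parity bookkeeping for $n$. Your normalization step is in fact tidier than the paper's --- you observe that the second congruence is implied by the first once $\ell\mid d$, treating $d$ even and odd uniformly, whereas the paper parameterizes solutions with auxiliary integers $k_0,k_0'$ and splits on the parity of $d$, deferring the even case to the set computations of Proposition \ref{P1} --- but the substance of the argument is the same.
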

\begin{proof}
{Again,} by Lemma \ref{E1}, any plane non-singular model of $\delta$
is $K$-isomorphic to type $\ell(d-2), (a,b)$ {of the form $(4.1)$ of
Theorem \ref{thm20}} for some $(a,b)\in \Gamma_{\ell(d-2)}$ and
$\ell(d-1)|(d-1)a+b,\,a+(d-1)b$. {In particular,} $2a=(d-2)k'_0+\ell
k_0$ and $2b=(d-2)k'_0-\ell k_0$ for some integers $k_0$ and $k'_0$
and we distinguish between whether $d$ is even or odd as follows: If
$d$ is even then so is $k_0$ and $a=\ell
k+(\frac{d-2}{2})k',\,b=-\ell k+(\frac{d-2}{2})k'$ for some integers
$k$ and $k'$. {Moreover $\ell|\frac{d}{2}k'$, since
$\ell(d-2)|(d-1)a+b$ and consequently}
$[X;\xi_{\ell(d-2)}Y;\xi_{\ell(d-2)}^{(\ell-1)(d-2)-1}Z]^{\ell
k+(\frac{d-2}{2})k'}=[X;\xi_{\ell(d-2)}^aY;\xi_{\ell(d-2)}^{b}Z].$
Therefore $a=1$ and $b=(\ell-1)(d-2)-1$ is a generator of the set of
solutions of the system. {As usual, it remains to determine the sets
$S_2^{d,X}$ and $S(2)^{j,X}$} for $j=2,...,d-1$ with
$m=\ell(d-2),\,a=1$ and $b=(\ell-1)(d-2)-1$. {In fact these sets are
the same as seen in the proof of Proposition \ref{P1} and the rest
will be typical except possibly we use the automorphism
$[X;\xi_{\ell(d-2)}Y;\xi_{\ell(d-2)}^{-(d-1)}Z]$ instead of
$[X;\xi_{\ell(d-2)}Y;\xi_{\ell(d-2)}^{d-3}Z]$ to obtain the required
equation in this case.}
%\begin{eqnarray*}
%{S_2^{d,X}}&:=&\{i:\,\,2\leq i\leq d-2\,\,\text{and}\,\,\ell(d-2)|i+(d-i)\left((\ell-1)(d-2)-1\right)\}\\
%&=&\{i:\,\,2\leq i\leq d-2\,\,\text{and}\,\,d-2|\frac{d}{\ell}(i-1)\}\subseteq\{\dfrac{d}{2}\}
%\end{eqnarray*}
%The last inclusion follows because {$0<\leq\frac{d}{\ell}(i-1)<\frac{d}{\ell}(d-2)$. That is} $\frac{d}{\ell}(i-1)=\mu(d-2)$ for some $1\leq\mu\leq\frac{d}{\ell}-1$ {hence} $\frac{d}{\ell}|2\mu$ and $\mu=\frac{d}{2\ell}$. Also
%\begin{eqnarray*}
%{S(2)^{j,X}}&:=&\{i:\,\,0\leq i\leq j\,\,\text{and}\,\,\ell(d-2)|i+(j-i)\left((\ell-1)(d-2)-1\right)\}\\
%&=&\{i:\,\,0\leq i\leq j\,\,\text{and}\,\,\ell(d-2)|(d-1)j-di\}\\
%&\subseteq&\{i:\,\,0\leq i\leq j\,\,\text{and}\,\,(d-2)|j-2i\}
%\end{eqnarray*}
%But we have $|j-2i|d-1$ then by the same argument as before the only possibilities are $j-2i=0,\pm(d-2)$. In particular, $S(2)^{j,X}_{m,(a,b)}$ is empty if $j$ is odd, $\{\frac{j}{2}\}$ if $j<d-2$ and even and $\{0,\frac{d-2}{2},d-2\}$ otherwise. Consequently, we obtain the form
%$$
%X^d+Y^{d-1}Z+\alpha YZ^{d-1}+X^{2}\big(\beta_{d-2,0}Z^{d-2}+\beta_{0,d-2}Y^{d-2}\big)+\sum_{j=2,4,...,d-2,d}\,\,\beta_{j,\frac{j}{2}}X^{d-j}Y^{\frac{j}{2}}Z^{\frac{j}{2}}=0
%$$
%Because we have $[X;\xi_{\ell(d-2)}Y;\xi_{\ell(d-2)}^{-(d-1)}Z]\in Aut(C)$ then $\beta_{d-2,0}=\beta_{0,d-2}=0$ moreover if $2\ell\nmid j$ then $\beta_{j(\frac{j}{2})}=0$.
If $d$ is odd then $k_0$ and $k'_0$ have the same parity and
$a={\frac{1}{2}(\ell k_0+k'_0(d-2))}, b={\frac{1}{2}(-\ell
k_0+k'_0(d-2))}$. {Also} $2|\pm k_0+(\frac{d}{\ell})k'_0$, since
$\ell(d-2)|(d-1)a+b,\,a+(d-1)b$ {and} in particular, we can replace
$k_0$ by $2k-(\frac{d}{\ell})k'_0$ for some integer $k$.
Consequently $\xi_{\ell(d-2)}^b=\xi_{\ell(d-2)}^{-(d-1)a}$ and
$[X;\xi_{\ell(d-2)}Y;\xi_{\ell(d-2)}^{-(d-1)}Z]^{a}=[X;\xi_{\ell(d-2)}^aY;\xi_{\ell(d-2)}^{b}Z]$.
Hence $a=1$ and $b=(\ell-1)(d-2)-1$ is again a generator of the set
of solutions. Finally, the sets ${S_2^{d,X}}$ and ${S(2)^{j,X}}$ for
$j=2,...,d-1$ with $m=\ell(d-2)$, $a=1$ and $b=(\ell-1)(d-2)-1$ {are
given below:}
\begin{eqnarray*}
{S_2^{d,X}}&:=&\{i:\,\,2\leq i\leq d-2\,\,\text{and}\,\,\ell(d-2)|i+(d-i)\left((\ell-1)(d-2)-1\right)\}\\
&=&\{i:\,\,2\leq i\leq d-2\,\,\text{and}\,\,d-2|\frac{d}{\ell}(i-1)\}\\
&=&\emptyset
\end{eqnarray*}
The last inclusion {can be easily deduced because}
${0<\frac{d}{\ell}(i-1)<\frac{d}{\ell}(d-2)}$. Therefore
$\frac{d}{\ell}(i-1)=\mu(d-2)$ for some
$1\leq\mu\leq\frac{d}{\ell}-1$. {This in turns gives}
$\frac{d}{\ell}|\mu$ (since $\frac{d}{\ell}$ is odd) which is not
possible. Also, {we have}
\begin{eqnarray*}
{S(2)^{j,X}}&:=&\{i:\,\,0\leq i\leq j\,\,\text{and}\,\,\ell(d-2)|i+(j-i)\left((\ell-1)(d-2)-1\right)\}\\
&=&\{i:\,\,0\leq i\leq j\,\,\text{and}\,\,\ell(d-2)|(d-1)j-di\}\\
&\subseteq&\{i:\,\,0\leq i\leq j\,\,\text{and}\,\,(d-2)|j-2i\}
\end{eqnarray*}
Because {$|j-2i|\leq d-1$ therefore $j-2i=0,\pm (d-2)$ and} $
S(2)^{j,X}_{m,(a,b)}=\left\{
\begin{array}
[c]{lr}%
\emptyset,\,\,\,\,\,\,\,\,\,\,\,\,\,\,\,\,\,\,\,\,\,\,\,\,\,\,\,\,\,\,\,\,\,\,\,\,\,\,\,\,\,if\,\,j\in\{1,3,...,d-4\}& \\
\{0,d-2\},\,\,\,\,\,\,\,\,\,\,\,\,\,\,\,\,\,\,if\,j=d-2 &\\
\{\frac{j}{2}\}\,\,\,\,\,\,\,\,\,\,\,\,\,\,\,\,\,\,\,\,\,\,\,\,\,\,\,\,\,otherwise
\end{array}
\right.$
 Moreover we obtain the form  $$X^d+Y^{d-1}Z+\alpha YZ^{d-1}+X^{2}\big(\beta_{d-2,0}Z^{d-2}
 +\beta_{d-2,d-2}Y^{d-2}\big)+\sum_{j=2,4,...,d-1}\,\,\beta_{j,\frac{j}{2}}X^{d-j}Y^{\frac{j}{2}}Z^{\frac{j}{2}}
=0$$
But $[X;\xi_{\ell(d-2)}Y;\xi_{\ell(d-2)}^{-(d-1)}Z]\in Aut(C)$ then $\beta_{d-2,0}=\beta_{d-2,d-2}=0$ moreover $\beta_{j,\frac{j}{2}}=0$ if $\ell\nmid \frac{j}{2}$.\\
This completes the proof.
\end{proof}
%The following result determines a necessary condition for which the locus $\widetilde{M_g^{Pl}(G)}$ is non-empty where $G$ is a finite group inside $PGl_3(K)$ such that $\Z/\ell(d-2)\Z\preceq G$ where $\ell>3$ and $2\nmid \ell$.
{The full automorphism group of the elements of the locus
$M_g^{Pl}(\Z/\ell(d-2)\Z)$ with $\ell\geq3$ odd is determined by the
result:}
\begin{prop}
{Let $\ell\geq3$ be an odd integer such that $\ell|d$ with $d\geq6$
and let $G$ be a finite group inside $PGL_3(K)$. Then $\delta\in
M_g^{Pl}(\Z/\ell(d-2)\Z)\cap \widetilde{M_g^{Pl}(G)}$} only if one
of the following situations occurs:
\begin{enumerate}
\item {$d=6$ and $G$ is conjugate to a central extension of $S_4$ by $\Z/6\Z$.
In this case, $G$ is of order $144$ and $\widetilde{M_g^{Pl}(G)}$ is an irreducible set that is given by the single element $X^6+Y^{5}Z+YZ^{5}=0$.}
\item {$d>6$ and $G$ is conjugate to $<\sigma,\tau|\tau^2=\sigma^{d(d-2)}=1,\,\tau\sigma\tau=\sigma^{-(d-1)}>$,
a central extension of order $2d(d-2)$ of $D_{2(d-2)}$ by $\Z/d\Z$.
Also $\widetilde{M_g^{Pl}(G)}$ is an irreducible set and is given by
one element with a non-singular plane model isomorphic to
$X^d+Y^{d-1}Z+YZ^{d-1}=0$.}

\item $\ell=5, d=10$ and $G$ is conjugate to $SmallGroup(80,25)$. {In this case every}
$\delta\in M_{36}^{Pl}(SmallGroup(80,25))$ has a {non-singular
plane} model {which is $K$-equivalent} to
$X^{10}+Y^9Z+YZ^9+\beta_{10,5}Y^5Z^5=0$ with $\beta_{10,5}\neq0$.
\item {$\ell>3,\,d\neq10$ and $G$ is an element of $Ext^1(N,D_{2m})$} where $N$ is a cyclic group order dividing $d$ {and $m=d-2$
with $2\nmid d$ and $\ell|\,|N|$ or $m=\frac{d-2}{2}$ with $2|d$ and
$2\ell|\,|N|$. Moreover $G$ contains a subgroup which is isomorphic
to}
$<\sigma,\tau:\,\tau^2=\sigma^{\ell(d-2)}=1\,\textit{and}\,\tau\sigma\tau=\sigma^{-(d-1)}>$
{as a subgroup. Also, every element} $\delta\in
{\widetilde{M_g^{Pl}(G)}}$ has a plane model {that is $K$-
equivalent to $(\ref{(Eq.2)})$} such that $\beta_{2\ell j,\ell j}
\neq0$ for some $j\in\{1,2,...,n\}$.
\end{enumerate}
\end{prop}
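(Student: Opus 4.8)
The plan is to mirror the proof of Proposition \ref{P3}, using the odd-$\ell$ normal form of Proposition \ref{P2} in place of Proposition \ref{P1}. By Proposition \ref{P2} it suffices to analyse non-singular plane models $C$ given by (\ref{(Eq.2)}), and I would split the argument according to whether all the coefficients $\beta_{2\ell k,\ell k}$ vanish. If $\beta_{2\ell k,\ell k}=0$ for every $k\in\{1,\dots,n\}$, then $C$ is $K$-isomorphic to $X^d+Y^{d-1}Z+\alpha YZ^{d-1}=0$, whose full automorphism group is supplied by Proposition \ref{prop15}: the central extension of $S_4$ by $\Z/6$ for $d=6$ and the central extension of $D_{2(d-2)}$ by $\Z/d$ for $d>6$. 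These are exactly cases $(1)$ and $(2)$.

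Now suppose $\beta_{2\ell j,\ell j}\neq 0$ for some $j$. First I would verify that $Aut(C)$ contains the non-abelian group $G_0:=\langle\sigma,\tau\rangle$ of order $2\ell(d-2)$, where $\sigma=[X;\xi_{\ell(d-2)}Y;\xi_{\ell(d-2)}^{-(d-1)}Z]$ and $\tau=[X;\mu Z;\mu^{-1}Y]$ with $\mu^{d-2}=\alpha$; using $(d-1)^2\equiv 1\ (mod\ \ell(d-2))$ one checks $\tau\sigma\tau=\sigma^{-(d-1)}$, so $Aut(C)$ is not cyclic. Then, exactly as in Proposition \ref{P3}, I would eliminate the remaining alternatives of Theorem \ref{teoHarui}: $C$ is not a descendant of the Klein curve since $2\ell(d-2)\nmid 3(d^2-3d+3)$; $Aut(C)$ is not conjugate to a primitive group since it has an element of order $\ell(d-2)\geq 12>7$; and $C$ is not a descendant of the Fermat curve because $\ell(d-2)>2d$ for $\ell\geq 3$ while $Aut(F_d)$ has exponent at most $2d$. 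Consequently $Aut(C)$ fixes a point off $C$ together with a line; since $\sigma,\tau\in Aut(C)$, that point must be $P_1$ and the line $X=0$, so every automorphism takes the form $[X;\beta_2'Y+\beta_3'Z;\gamma_2'Y+\gamma_3'Z]$.

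Next I would feed this into the short exact sequence $1\to N\to Aut(C)\to\rho(Aut(C))\to 1$ of Theorem \ref{teoHarui}(2), with $N$ cyclic of order dividing $d$ and $\rho\colon PBD(2,1)\to PGL_2(K)$ the projection onto the $(Y,Z)$-action. A direct computation gives $\rho(\tau)=\left(\begin{smallmatrix}0&\mu\\\mu^{-1}&0\end{smallmatrix}\right)$ of order $2$ and $\rho(\sigma)$ diagonal with $\rho(\tau)\rho(\sigma)\rho(\tau)=\rho(\sigma)^{-1}$; since $\gcd(\ell(d-2),d)=\ell\gcd(2,d/\ell)$ and $\ell$ is odd, $\rho(\sigma)$ has order $d-2$ when $d$ is odd and $(d-2)/2$ when $d$ is even. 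Hence $\rho(Aut(C))$ always contains a dihedral group $D_{2m}$ with $m=d-2$ or $m=(d-2)/2$, and comparing orders in $2\ell(d-2)=|N|\,|\rho(Aut(C))|$ forces $\ell\mid|N|$ (for $d$ odd) or $2\ell\mid|N|$ (for $d$ even). For $\ell>3$ and $d\neq 10$ a short verification gives $m\geq 6$, so $D_{2m}$ contains an element of order $m\geq 6$, exceeding the maximal order of an element of $A_4$, $S_4$ or $A_5$ (which is $3$, $4$, $5$ respectively); therefore $\rho(Aut(C))\cong D_{2m}$ and one obtains precisely the extension of case $(4)$.

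The delicate point, and the step I expect to be the main obstacle, is the exceptional configuration $\ell=5,\ d=10$, where $m=(d-2)/2=4$ so that $D_8$ embeds into $S_4$ and the element-order argument no longer pins down $\rho(Aut(C))$. Here one must compute $Aut(C)$ directly for $X^{10}+Y^9Z+YZ^9+\beta_{10,5}Y^5Z^5=0$, listing the monomial-preserving transformations $[X;aY+bZ;cY+dZ]$, checking non-singularity and that $\beta_{10,5}\neq 0$ is forced, and identifying the resulting group of order $80$ with $SmallGroup(80,25)$. This is entirely analogous to the explicit $SmallGroup(16,8)$ and $SmallGroup(32,19)$ computations carried out in the proof of Proposition \ref{P3}, and it is exactly where the clean group-theoretic reasoning must be replaced by a degree-specific calculation.
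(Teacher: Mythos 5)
Your generic branch follows the paper's proof of this proposition almost verbatim (normal form from Proposition \ref{P2}, the subgroup $G_0$, exclusion of Klein descendants and primitive groups, the exact sequence with $\rho(\sigma)$ of order $d-2$ or $\frac{d-2}{2}$, and the divisibility forcing $\ell\,|\,|N|$ or $2\ell\,|\,|N|$ -- note only that $2\ell(d-2)$ \emph{divides} $|N|\cdot|\rho(Aut(C))|$, it need not equal it). But there is a genuine gap in your Fermat exclusion: the inequality $\ell(d-2)>2d$ is \emph{false} at $(\ell,d)=(3,6)$, where $\ell(d-2)=12=2d$ and $Aut(F_6)$ really does contain elements of order $12$ (e.g.\ $[X;\xi_6^aZ;\xi_6^bY]$ with $\gcd(6,a+b)=1$); the group-order test fails too, since $2\ell(d-2)=24$ divides $6d^2=216$. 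This is exactly why the paper devotes the entire final paragraph of its proof to showing that $C:\,X^6+Y^5Z+YZ^5+\beta_1Y^3Z^3=0$ is not a descendant of the Fermat sextic: one normalizes the conjugating matrix $P$ using the conjugacy class of the order-$3$ homology $\sigma^4=[\omega X;Y;Z]$ inside $Aut(F_6)$, reduces to $P=[X;\mu_2Y+\mu_3Z;\gamma_2Y+\gamma_3Z]$, and derives a contradiction from the shape of the order-$12$ elements of $Aut(F_6)$ together with $P^{-1}\tau P$ having order $2$. Without some replacement for this computation, your claim that $Aut(C)$ fixes a point and a line is unjustified precisely in the case $\ell=3$, $d=6$ with $\beta\neq 0$, and the case analysis does not close.

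Your handling of $(\ell,d)=(5,10)$ also differs from the paper and, as sketched, has a secondary hole. You propose to list the transformations $[X;aY+bZ;cY+dZ]$ preserving the equation and expect a monomial (diagonal or anti-diagonal) answer, as in the $SmallGroup(16,8)$ and $SmallGroup(32,19)$ computations of Proposition \ref{P3} -- but in those computations monomiality was \emph{derived} from the absence of specific monomials in the equation, and no such derivation is given here; a priori an automorphism could genuinely mix $Y$ and $Z$. Equivalently, after establishing $D_8\preceq\rho(Aut(C))$, you must still exclude $\rho(Aut(C))\cong S_4$ and $\rho(Aut(C))\cong D_{16}$, which is exactly what the paper does by explicit $PGL_2(K)$ matrix manipulations: assuming $\rho(Aut(C))\cong S_4$ produces an element $\tau'$ forced to satisfy $\lambda^2=-1=1$, a contradiction, and assuming $\rho(Aut(C))\cong D_{16}$ forces $\tau'$ anti-diagonal with $(\tau')^2\rho(\sigma)^2$ of order $2<8$, again a contradiction, leaving $\rho(Aut(C))\cong D_8$ and $|Aut(C)|=80$ with $Aut(C)=\langle[X;\xi_{40}Y;\xi_{40}^{-9}Z],\,[X;Z;Y]\rangle\cong SmallGroup(80,25)$. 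Your plan can be repaired along either line, but as written both the $(3,6)$ and $(5,10)$ exceptional cases -- which are where the actual content of this proposition lies -- are not yet proved.
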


\begin{proof}
{We could apply the same argument of Proposition \ref{P3} to
conclude the following:}
\begin{itemize}
  \item {Case $(1)$ or $(2)$ occurs if and only if $\beta_{2\ell k,\ell k}=0$ for all $k\in\{1,2,...,n\}$.}
  \item {Every plane non-singular model $C$ of $\delta\in M_g^{Pl}(\Z/\ell(d-2)\Z)$ (which is isomorphic to equation $(\ref{(Eq.2)})$) admits always $G_0$ as a subgroup of order $2\ell(d-2)$ with $\sigma:=[X;\xi_{\ell(d-2)}Y;\xi_{\ell(d-2)}^{-(d-1)}Z]$ and $\tau:=[X;\mu Z;\mu^{-1}Y]$ where $\mu^{d-2}=\alpha.$ In particular, $Aut(\delta)$ is not cyclic}
  \item {$\delta$ is not a descendant of the the Klein curve and also $Aut(\delta)$ is not conjugate to any of the finite primitive groups inside $PGL_3(K)$.}
  \item {If $\ell\neq3$ or $d\neq6$, $\delta$ is not a descendant of the Fermat curve.}
\end{itemize}
{Assuming that $\ell\neq3$ or $d\neq6$ and following the same
ideas,} we can think about $Aut(C)$ in a short exact sequence
$1\rightarrow N\rightarrow Aut(C)\rightarrow \rho(Aut(C))\rightarrow
1$ where $\rho(Aut(C))$ contains the element $\rho(\tau)=\left(
  \begin{array}{cc}
0 & \mu \\
 \mu^{-1} & 0 \\
\end{array}
\right)$ of order $2$ and the element $\rho(\sigma)=\left(
\begin{array}{cc}
1 & 0 \\
0 & \xi_{d-2}^{\frac{d}{\ell}} \\
 \end{array}
 \right)$ of order {$d-2$ (only if $2\nmid d$) and $\frac{d-2}{2}$ (otherwise)}. In particular,
 $\rho(Aut(C))$ is not cyclic. Moreover, if $2\nmid d$ {(resp. $2|d$ and $d\neq10$)}
 then $\rho(Aut(C))$ is not conjugate to $A_4, S_4$ or $A_5$
 {, since it has an element of order $>5$}. Consequently $\rho(Aut(C))$ is conjugate to $D_{2(d-2)}$ {or $D_{2(\frac{d-2}{2})}$ (only if $2|d$)} and $|Aut(C)|=2(d-2)||N|$ or $(d-2)|N|$. {Therefore $|N|$ should be divisible by $\ell$ or $2\ell$,} since $2\ell(d-2)||Aut(C)|$.
\par If $d=10$ {then} $\ell=5$ {and} the equation $(\ref{(Eq.2)})$ in
Proposition \ref{P2} is reduced to
$X^{10}+Y^9Z+YZ^9+\beta_1Y^5Z^5=0$. {Also} $N=<diag(\xi_{10};1;1)>$
{and} $\rho(Aut(C))$ is not conjugate to $A_4$ or $A_5${, since}
$D_{8}\preceq\rho(Aut(C))$. {Therefore $\rho(Aut(C))$ is conjugate
to $S_4, D_{16}$ or $D_8$}. {If $\rho(Aut(C))\equiv S_4$ then there
exists an element $\tau'\in PGL_2(K)$ such that
$\rho(\sigma)^2\tau'$ and $\tau'^{-1}\rho(\tau)\rho(\sigma)^2$ are
of order 2 and moreover
$\rho(\sigma)^2(\rho(\tau)\tau'\rho(\tau))=\rho(\tau)\rho(\sigma)^2\tau'$.
The first relation gives $\tau'=\left(
\begin{array}{cc}
\mu_1 & \xi_4^{a} \\
1 & \mu_1 \\
\end{array}
\right)$,
% In fact because $\rho(\sigma)^2\tau'$ has order 2 then we have two possibilities $\tau'=\left(\begin{array}{cc}
%\mu_1 & 0 \\
%0 & \mu_4 \\
%\end{array}
%\right)}$ (being of order 3 gives $\mu_1=\xi_3^{b}\mu_4$ and thus does not preserve $Y^9Z+YZ^9+...$ a contradiction) or $\mu_1=\mu_4$ (note that because $\alpha=1$ then $\mu=\xi_8^a$). Secondly because $\tau'^{-1}\rho(\tau)\rho(\sigma)^2$ has order 2, then we get
%$\tau'=\left(\begin{array}{cc}
%\mu_1 & \xi_4^{a}\mu_3 \\
%\mu_3 & \mu_1 \\
%\end{array}
%\right)}$ and being of order 3 implies $\mu_3\neq0$ that is $\tau'$ is equivalent to the prescribed form.
and then imposing the second condition to get $\exists \lambda\in K^*$ such that $\lambda \mu=-\mu_1,\,\lambda\mu_1=-\mu,\,\lambda\mu_1=\mu^{-1}\xi_4^{a}$ and $\lambda\mu^{-1}\xi_4^{a}=\mu_1$ hence $-1=\lambda^2=1$ a contradiction.
If $\rho(Aut(C))\equiv D_{16}$ then there must be an element
$\tau'\in PGL_2(K)$ of order $2$ such that $\tau'\rho(\sigma)^2$ has order $8$ with $\rho(\tau),\rho(\sigma)\in <\tau',\rho(\sigma)^2>=D_{16}$. In particular $(\tau'\rho(\sigma)^2)^2=\rho(\sigma)$ or $\rho(\sigma)^{-1}$ (being the only elements of order $4$ inside $D_{16}$) hence
%$:=\left(
%\begin{array}{cc}
%\mu_1 & \mu_2 \\
%\mu_3 & \mu_4 \\
%\end{array}
%\right)
%$
%In particular $\mu_4=-\mu_1$ (being of order $2$). Moreover if $\tau'\rho(\sigma)^2$ has order $8$ then $\mu_1=0$, since $D_{16}$ has only two elements of order $4$ which are $\rho(\sigma)$ and $\rho(\sigma)^{-1}$.
$\tau'=\left(
\begin{array}{cc}
0 & \mu_2 \\
\mu_3 & 0 \\
\end{array}
\right)$. In this case $\tau'\tau'\rho(\sigma)^2$ is of order $2<8$
a contradiction. } {We then conclude that} $\rho(Aut(C))$ is
conjugate to $D_{8}$ and $|Aut(C)|=80$. More precisely, $Aut(C)$ is
generated by $\sigma:=[X;\xi_{40}Y;\xi_{40}^{-9}Z]$ and
$\tau:=[X;Z;Y]$ {which is} isomorphic to
$<\sigma,\tau:\,\tau^2=\sigma^{40}=1\,\textit{and}\,\tau\sigma\tau=\sigma^{-9}>{\cong
SmallGroup(80,25)}$.
\par {Finally it remains to treat the case $\ell=3$ and $d=6$ where $C:\,X^{6}+Y^5Z+YZ^5+\beta_1Y^3Z^3=0$ is a descendant of the Fermat sextic curve through a transformation $P\in PGL_3(K)$. Since $C$ admits an automorphism $\sigma:=[X;\xi_{12}Y;\xi_{12}^{-5}Z]$ of order 12 then $\sigma^4=[\omega X;Y;Z]\in Aut(C)$ is a homology of order $3$. Also homologies of order 3 inside $Aut(F_6)$ are divided into $S_1:=\{[\omega X;Y;Z],\,[X;\omega Y;Z],\,[X;Y;\omega Z]\}$ and
$S_2:=\{[\omega^2 X;Y;Z],\,[X;\omega^2 Y;Z],\, [X;Y;\omega^2 Z]\}$
where both sets lie in different conjugacy classes in $PGL_3(K)$.
Consequently ${P^{-1}\sigma^4P}\in S_1$ and because the elements of
$S_1$ are conjugate to each others inside $Aut(F_6)$, we need only
to consider the situation ${P^{-1}\sigma^4P}=\sigma^4$. Thus
$P=[X;\mu_2Y+\mu_3Z;\gamma_2Y+\gamma_3Z]$ and $C$ is transformed to
the form
%and elements of order $12$ in $Aut(F_6)$ are $[X;\xi_6^aZ;\xi_6^bY],\,[\xi_6^bY;\xi_6^aX;Z]$ or $[\xi_6^bZ;Y;\xi_6^aX]$ such that $gcd(6,a+b)=1$. We can assume, without loss of generality, that $a=0$ and $b=1$ or $5$, since any other element of order $12$ is conjugate to one of these inside $Aut(F_6)$. Now it follows by our assumption that $\exists P\in PGL_3(K)$ with $P\sigma P^{-1}=[X;Z;\xi_6Y]$ or $[X;Z;\xi_6^{-1}Y]$
$\widehat{C}:\,\,X^6+\nu_0Y^6+\nu_1Z^6+G(Y,Z)$
where $\nu_0:=\gamma _2 \mu _2 \left(\gamma _2^4+\beta  \mu _2^2
   \gamma _2^2+\mu _2^4\right)(=1)$ and $\nu_1:=\gamma _3 \mu _3 \left(\gamma _3^4+\beta  \mu _3^2
   \gamma _3^2+\mu _3^4\right)(=1)$. In particular,
$(\gamma _2 \mu _2)(\gamma _3 \mu _3)\neq0$ and
$[\xi_6^bY;\xi_6^aX;Z],\,[\xi_6^bZ;Y;\xi_6^aX]\notin
Aut(\widehat{C})$. Hence ${P^{-1}\sigma P}=[X;Z;\xi_6^bY]\in
Aut(\widehat{C})$ with $b=1$ or $5$, since elements of order $12$ in
$Aut(F_6)$ are $[X;\xi_6^aZ;\xi_6^bY],\,[\xi_6^bY;\xi_6^aX;Z]$ or
$[\xi_6^bZ;Y;\xi_6^aX]$ such that $gcd(6,a+b)=1$ and moreover any
such element is conjugate inside $Aut(F_6)$ to $[X;Z;\xi_6^bY]$ with
$b=1$ or $5$. On the other hand, ${P^{-1}\tau P}\in
Aut(\widehat{C})$ is of order $2$
% we assume $\mu=1$ because already $\alpha=1$
thus $\mu_3=\mu_2,\,\gamma_3=\gamma_2$ or $\mu_3=-\mu_2,\,\gamma_3=-\gamma_2$, which in turns reduces $\widehat{C}$ to  $X^6+(Y\pm Z)^6$. This is not possible because $[X;Z;\xi_6^bY]$ with $b=1$ or $5$ does not retain $\widehat{C}$, therefore $C$ is not be a descendant of the Fermat curve.
}
\\
This completes the proof.
\end{proof}
%%%%%%%%%%%%%%%%%%%%%%%%%%%%%%%%%%%%%%%%%%%%%%%%%%%%%%%
%%%%%%%%%%%%%%%%%%%%%%%%%%%%%%%%%%%%%%%%%%%%%%%%%%%%%%%%%
%\vspace{-1.5cm}
\appendix{}

\section{Tables of Type $m(a,b)$ for degree $d\leq 9$}
In this appendix we introduce tables {for the} types of cyclic
groups and {the} equations that are
obtained {as} a result of \S 2 {with respect to low} degrees{. I}n particular, we list {the
possible} $m,(a,b)$ such that $\rho_{m,a,b}(M_g^{Pl}(\Z/m))$ may be
non-trivial, and {we associate a normal form} $F(X;Y;Z)=0$ for such
loci, {where} any element of the {locus} has a plane non-singular
model {for some} specialization of the parameters. The notation of
the parameters, for a fixed degree $d$, are unrelated from one type
to another one: for example, we use, by an abuse of notation,
$\beta_{i,j}$ as the parameter of the monomial $X^{d-j}Y^iZ^{j-i}$
in any normal form.

It might happen that two types $m, (a,b)$ and $m, (a',b')$ are
isomorphic through a permutation of the variables or $F(X;Y;Z)$
decomposes into a product $X.G(X;Y;Z)$. The following tables are
obtained by {compiling the SAGE code of Theorem \ref{thm20}} and
then removing those types which are isomorphic to a certain type or
are not irreducible, see the programm in
http://mat.uab.cat/$\sim$eslam/CAGPC.sagews
\begin{small}
\begin{center}
\begin{table}[!th]
  \renewcommand{\arraystretch}{1.3}
  \caption{Quartics\,\,\,}\label{table:Cyclic Auto42.}
  \vspace{4mm} % hack
  \centering
\begin{tabular}{|c|c|}
  \hline
  % after \\: \hline or \cline{col1-col2} \cline{col3-col4} ...
  Type: $m, (a,b)$ & $F(X;Y;Z)$ \\\hline\hline
  $12,(3,4)$& $X^4+Y^4+\alpha XZ^3$ \\\hline
  $9,(1,6)$& $X^4+Y^3Z+\alpha XZ^3$ \\\hline
  $8,(1,5)$& $X^4+Y^3Z+\alpha YZ^3$ \\\hline
  $7,(1,5)$& $X^3Y+Y^3Z+\alpha Z^3X$ \\\hline
$6,(3,4)$ & $X^4+Y^4+\alpha XZ^3+\beta_{2,2}X^2Y^2$ \\\hline
 $4,(1,2)$& $X^4+Y^4+Z^4+\beta_{2,0}X^2Z^2+\beta_{3,2}XY^2Z $ \\\hline
% $\textbf{4,(2,3)}$& $X^4+Y^4+Z^4+\beta_{2,2}X^2Y^2+\beta_{3,1}XYZ^2 $ \\\hline
 %$\textbf{4,(1,3)}$& $X^4+Y^4+Z^4+\beta_{4,2}Y^2Z^2+\beta_{2,1}X^2YZ $ \\\hline
 $4,(0,1)$& $Z^4+L_{4,Z}$ \\\hline
%$3,(1,2)$& $X^4+X\big(Z^3+\alpha Y^3\big)+\beta_{2,1}X^2YZ+\beta_{4,2}Y^2Z^2$ \\\hline
 % $3,(0,1)$& $Z^3L_{1,Z}+L_{4,Z}$ \\\hline
  %$2,(0,1)$& $Z^4+Z^2L_{2,Z}+L_{4,Z}$ \\\hline
 % \end{tabular}
%\end{table}
%\end{center}
%\end{small}
%
%\begin{small}
%\begin{center}
%\begin{tabular}{|c|c|}
%  \hline
  % after \\: \hline or \cline{col1-col2} \cline{col3-col4} ...
%  Type: $m, (a,b)$ & $F(X;Y;Z)$ \\\hline\hline
 % $12,(3,4)$& $X^4+Y^4+\alpha XZ^3$ \\\hline
 % $9,(1,6)$& $X^4+Y^3Z+\alpha XZ^3$ \\\hline
 % $8,(1,5)$& $X^4+Y^3Z+\alpha YZ^3$ \\\hline
  %$7,(1,5)$& $X^3Y+Y^3Z+\alpha Z^3X$ \\\hline
%$6,(3,4)$ & $X^4+Y^4+\alpha XZ^3+\beta_{2,2}X^2Y^2$ \\\hline
% $4,(1,2)$& $X^4+Y^4+Z^4+\beta_{2,0}X^2Z^2+\beta_{3,2}XY^2Z $ \\\hline
% $\textbf{4,(2,3)}$& $X^4+Y^4+Z^4+\beta_{2,2}X^2Y^2+\beta_{3,1}XYZ^2 $ \\\hline
 %$\textbf{4,(1,3)}$& $X^4+Y^4+Z^4+\beta_{4,2}Y^2Z^2+\beta_{2,1}X^2YZ $ \\\hline
% $4,(0,1)$& $Z^4+L_{4,Z}$ \\\hline
$3,(1,2)$& $X^4+X\big(Z^3+\alpha Y^3\big)+\beta_{2,1}X^2YZ+\beta_{4,2}Y^2Z^2$ \\\hline
  $3,(0,1)$& $Z^3L_{1,Z}+L_{4,Z}$ \\\hline
  $2,(0,1)$& $Z^4+Z^2L_{2,Z}+L_{4,Z}$ \\\hline
  \end{tabular}
  \end{table}
\end{center}
\end{small}

\begin{center}
\begin{table}[!th]
  \renewcommand{\arraystretch}{1.3}
  \caption{Quintics\,\,\,}\label{table:Cyclic Auto52.}
  \vspace{4mm} % hack
  \centering
\begin{tabular}{|c|c|}
  \hline
  % after \\: \hline or \cline{col1-col2} \cline{col3-col4} ...
  Type: $m, (a,\,b)$ & $F(X;Y;Z)$ \\\hline\hline
  $20,(4,5)$& $X^5+Y^5+\alpha XZ^4$ \\\hline
  $16,(1,12)$& $X^5+Y^4Z+\alpha XZ^4$\\\hline
  $15,(1,11)$& $X^5+Y^4Z+\alpha YZ^4$    \\\hline
$13,(1,10)$& $X^4Y+Y^4Z+\alpha Z^4X$    \\\hline
 $10,(2,5)$& $X^5+Y^5+\alpha XZ^4+\beta_{2,0}X^3Z^2$ \\\hline
  $8,(1,4)$& $X^5+Y^4Z+\alpha XZ^4+\beta_{2,0}X^3Z^2$ \\\hline
 $5,(1,2)$& $X^5+Y^5+Z^5+\beta_{3,1}X^2YZ^2+\beta_{4,3}XY^3Z$    \\\hline
%$\textbf{5,(1,3)}$& $X^5+Y^5+Z^5+\beta_{3,2}X^2Y^2Z+\beta_{4,1}XYZ^3$    \\\hline
%$\textbf{5,(1,4)}$& $X^5+Y^5+Z^5+\beta_{2,1}X^3YZ+\beta_{4,2}XY^2Z^2$    \\\hline
  $5,(0,1)$& $Z^5+L_{5,Z}$    \\\hline
 % \end{tabular}
%\end{table}
%\end{center}
%
%
%\begin{center}
%\begin{tabular}{|c|c|}
%  \hline
  % after \\: \hline or \cline{col1-col2} \cline{col3-col4} ...
 $4,(1,2)$& $X^5+X\big(Z^4+\alpha Y^4\big)+\beta_{2,0}X^3Z^2+\beta_{3,2}X^2Y^2Z+\beta_{5,2}Y^2Z^3$\\\hline
%$\textbf{4,(1,3)}$& $X^5+X\big(Z^4+\alpha Y^4\big)+\beta_{2,1}X^3YZ+\beta_{4,2}XY^2Z^2$\\\hline
 %$\textbf{4,(2,3)}$& $X^5+X\big(Z^4+\alpha Y^4\big)+\beta_{2,2}X^3Y^2+\beta_{3,1}X^2YZ^2+\beta_{5,3}Y^3Z^2$\\\hline

 \end{tabular}
\end{table}
\end{center}

\begin{small}
\begin{center}
\begin{tabular}{|c|c|}
  \hline
  % after \\: \hline or \cline{col1-col2} \cline{col3-col4} ...
  %Type: $m, (a,b)$ & $F(X;Y;Z)$ \\\hline\hline

  $4,(0,1)$& $Z^4L_{1,Z}+L_{5,Z}$    \\\hline
  $3,(1,2)$& $X^5+Y^4Z+\alpha YZ^4+\beta_{2,1}X^3YZ+X^2\big(\beta_{3,0}Z^3+\beta_{3,3}Y^3\big)+\beta_{4,2}XY^2Z^2$ \\\hline
 $2,(0,1)$& $Z^4L_{1,Z}+Z^2L_{3,Z}+L_{5,Z}$    \\\hline
  \end{tabular}

\end{center}

\end{small}

%\newpage

\begin{small}
\begin{center}
\begin{table}[!th]
  \renewcommand{\arraystretch}{1.3}
  \caption{Sextics\,\,\,}\label{table:Cyclic Auto62.}
  \vspace{4mm} % hack
  \centering
\begin{tabular}{|c|c|}
  \hline
  % after \\: \hline or \cline{col1-col2} \cline{col3-col4} ...
  Type: $m, (a,\,b)$ & $F(X;Y;Z)$ \\\hline\hline
$30,(5,6)$& $X^{6} + Y^{6}+\alpha X Z^{5} $ \\\hline
$25,(1,20)$& $X^{6} + Y^{5} Z+\alpha X Z^{5} $ \\\hline
$24,(1,19)$& $X^{6} + Y^{5} Z +\alpha Y Z^{5}$ \\\hline
  $21,(1,17)$& $X^{5} Y + Y^{5} Z+\alpha X Z^{5} $ \\\hline
$15,(5,6)$& $X^{6} + Y^{6}+\alpha X Z^{5}+{\beta_{3,3}} X^{3} Y^{3} $ \\\hline
 $12,(1,7)$& $X^{6} + Y^{5} Z +\alpha Y Z^{5} +{\beta_{6,3}}Y^{3} Z^{3} $ \\\hline
$10,(5,6)$& $X^{6} + Y^{6}+\alpha X Z^{5}+ {\beta_{2,2}}X^{4} Y^{2} + {\beta_{4,4}}X^{2} Y^{4}  $ \\\hline
 $8,(1,3)$& $X^{6} + Y^{5} Z +\alpha Y Z^{5} + {\beta_{4,2}}X^{2} Y^{2} Z^{2} $ \\\hline
$6,(1,2)$& $X^{6} + Y^{6} + Z^{6}+{\beta_{3,0}}X^{3} Z^{3}  + {\beta_{4,2}}X^{2} Y^{2} Z^{2}  + {\beta_{5,4}}X Y^{4} Z $ \\\hline
$6,(1,3)$& $X^{6} + Y^{6} + Z^{6}+{\beta_{2,0}}X^{4} Z^{2}  + {\beta_{6,3}} Y^{3} Z^{3} +  X^{2}{\left({\beta_{4,0}}Z^{4} + {\beta_{4,3}}Y^{3} Z \right)}$ \\\hline
%$\textbf{6,(1,4)}$& $X^{6}+Y^{6}+Z^{6}+\beta_{6,2}Y^{2} Z^{4}+\beta_{6,4}Y^{4} Z^{2}+X^{3}\left(\beta_{3,0}Z^{3}+ \beta_{3,2}Y^{2}Z \right) $ \\\hline
%$\textbf{6,(1,5)}$& $X^{6}+Y^{6}+Z^{6}+\beta_{2,1}X^{4}Y Z+\beta_{4,2}X^{2}Y^{2}Z^{2}+\beta_{6,3}Y^{3} Z^{3}$\\\hline
%$\textbf{6,(2,3)}$& $X^{6}+Y^{6}+Z^{6}+\beta_{2,0}X^{4} Z^{2}+ {{\beta_{3,3}}} X^{3} Y^{3}+{{\beta_{4,0}}} X^{2} Z^{4}+{{\beta_{5,3}}} X Y^{3} Z^{2}$\\\hline
%$\textbf{6,(2,5)}$& $X^{6} + Y^{6} + Z^{6}+{{\beta_{6,2}}}Y^{2} Z^{4} +{{\beta_{6,4}}}Y^{4} Z^{2}+X^{3}\left({{\beta_{3,1}}}Y Z^{2}+ {{\beta_{3,3}}}Y^{3} \right)$ \\\hline
%$\textbf{6,(3,4)}$& $X^{6}+Y^{6}+Z^{6}+ \beta_{2,2}X^{4}Y^{2}+ \beta_{3,0}X^{3}Z^{3}+ \beta_{4,4}X^{2}Y^{4}+\beta_{5,2}XY^{2}Y^{3}$ \\\hline
%$\textbf{6,(3,5)}$ & $X^{6}+Y^{6}+Z^{6}+\beta_{2,2}X^{4} Y^{2}+\beta_{6,3}Y^{3}Z^{3} + X^{2}\left(\beta_{4,1}YZ^{3}+\beta_{4,4}Y^{4}\right)$\\\hline
%$\textbf{6,(4,5)}$& $X^{6}+Y^{6}+Z^{6}+\beta_{3,3}X^{3}Y^{3}+\beta_{4,2} X^{2}Y^{2} Z^{2}+ \beta_{5,1}X Y Z^{4}$\\\hline
$6,(0,1)$& $Z^{6} + {L_{6,Z}}$ \\\hline
$5,(1,2)$& $X^{6} + X Z^{5}+\alpha X Y^{5}  + {\beta_{3,1}}X^{3} Y Z^{2}  + {\beta_{4,3}}X^{2} Y^{3} Z  +{\beta_{6,2}} Y^{2} Z^{4} $ \\\hline
%$\textbf{5,(1,3)}$& $X^{6} + X Z^{5}+\alpha X Y^{5}  + {\beta_{3,2}}X^{3} Y^{2} Z  + {\beta_{4,1}}X^{2} Y Z^{3}  +{\beta_{6,4}} Y^{4} Z^{2} $ \\\hline
$5,(1,4)$& $X^{6} + X Z^{5}+\alpha X Y^{5}+{\beta_{2,1}}X^{4} Y Z  + {\beta_{4,2}}X^{2} Y^{2} Z^{2}  + {\beta_{6,3}}Y^{3} Z^{3} $ \\\hline
  $5,(0,1)$& $Z^{5}{L_{1,Z}}  + {L_{6,Z}}$ \\\hline
$4,(1,3)$& $X^{6} + Y^{5} Z +\alpha Y Z^{5} +{\beta_{6,3}}Y^{3}
Z^{3} + {\beta_{2,1}}X^{4} Y Z  +  X^{2}{\left({\beta_{4,0}}Z^{4}  +
{\beta_{4,2}}Y^{2} Z^{2} + {\beta_{4,4}}Y^{4} \right)} $ \\\hline
$3,(0,1)$& $Z^{6} + Z^{3}{L_{3,Z}}  + {L_{6,Z}}$ \\\hline $2,(0,1)$&
$Z^{6} + Z^{4}{L_{2,Z}}  + Z^{2}{L_{4,Z}}+{L_{6,Z}} $
\\\hline
   \end{tabular}
\end{table}

\end{center}

\end{small}

%\begin{rem} \begin{itemize}
             % \item {In degree table: Types $4,(2,3)$ and $4,(1,3)$ are $K$-isomorphic to Type $4, (1,2)$ through the transformation $[X;Z;Y]$ and $[Y;X;Z]$ respectively, hence could be omitted.}
              %\item {In degree 5 table: Types $5, (1,3)$ and $5, (1,4)$ are isomorphic to Type $5, (1,2)$ through the transformations $[X;Z;Y]$ and $[Y;X;Z]$ respectively. Also Type $4, (2,3)$ is isomorphic to Type $4, (1,2)$ through the transformation $[X;\mu^{-1}Z;\mu Y]$ where $\mu^4=\alpha$. On the other hand, Type $4, (1,3)$ is of the form $X.G(X;Y;Z)$ (i.e is irreducible) therefore is a singular curve which is out of scope of this work.}
           % \item {In degree 6 table: Type $5, (1,3)$ is isomorphic to Type $5, (1,2)$ through the transformation $[X;\mu^{-1}Z;\mu Y]$ where $\mu^5=\alpha$. Also Type $6,(1,4)$ (resp. $6,(1,5)$) is isomorphic to Type $6,(1,3)$ (resp. $6,(1,2)$) through $[Y;X;Z]$ moreover type $6,(2,3)$ (resp. $6,(2,5)$) is isomorphic to $6,(1,3)$ through $[Z;Y;X]$ (resp. $[Y;Z;X]$). Finally, types $6,(3,4)$ and $6,(3,5)$ are isomorphic to $6,(1,3)$ through $[Z;X;Y]$ and $[X;Z;Y]$ and type $6,(4,5)$ is isomorphic to $6,(1,2)$ through $[X;Z;Y]$}
%            \ item In degree 7 table: Type
 %           \end{itemize}
%\end{rem}

\begin{small}
\begin{center}
\begin{table}[!th]
  \renewcommand{\arraystretch}{1.3}
  \caption{degree 7\,\,\,}\label{table:Cyclic Auto72.}
  \vspace{4mm} % hack
  \centering
\begin{tabular}{|c|c|}
  \hline
  % after \\: \hline or \cline{col1-col2} \cline{col3-col4} ...
  Type: $m, (a,\,b)$ & $F(X;Y;Z)$ \\\hline\hline
  $42,(6,7)$& $X^7+ Y^7+\alpha XZ^6$ \\\hline
$36,(1,30)$& $X^{7} + Y^{6} Z+\alpha X Z^{6}$ \\\hline
  $35,(1,29)$& $X^7+Y^6Z+\alpha YZ^6$ \\\hline
  $31,(1,26)$& $X^{6} Y + Y^{6} Z+\alpha X Z^{6} $ \\\hline
  $21,(3,7)$& $X^7 + Y^7+\alpha XZ^6 + \beta_{3,0}X^4Z^3$ \\\hline
  $18,(1,12)$& $X^{7} + Y^{6} Z+\alpha X Z^{6}+ {\beta_{3,0}}X^{4} Z^{3} $ \\\hline
 $14,(2,7)$& $X^7 + Y^7+\alpha XZ^6 + \beta_{2,0}X^5Z^2 +\beta_{4,0}X^3Z^4$ \\\hline
  $12,(1,6)$& $X^{7} + Y^{6} Z+\alpha X Z^{6}+ {\beta_{2,0}}X^{5} Z^{2}+ {\beta_{4,0}}X^{3} Z^{4}$ \\\hline
  $9,(1,3)$& $X^{7} + Y^{6} Z+\alpha X Z^{6}+ {\beta_{3,0}}X^{4} Z^{3}  + {\beta_{5,3}}X^{2} Y^{3} Z^{2} $ \\\hline
  $7,(1,2)$& $X^{7} + Y^{7} + Z^{7}+{\beta_{4,1}}X^{3} Y Z^{3}  + {\beta_{5,3}}X^{2} Y^{3} Z^{2}  + {\beta_{6,5}}X Y^{5} Z $ \\\hline
 $7,(1,3)$& $X^{7} + Y^{7} + Z^{7}+{\beta_{3,1}}X^{4} Y Z^{2}  + {\beta_{5,4}}X^{2} Y^{4} Z  + {\beta_{6,2}}X Y^{2} Z^{4}  $ \\\hline
% isomorphic to $7,(1,2)$ through $[X,Z,Y]$ $(1, 4) 7$ & $X^{3} Y^{3} Z {{\beta_{4,3}}} + X^{2} Y^{2} Z^{3} {{\beta_{5,2}}} + X Y Z^{5} {{\beta_{6,1}}} + X^{7} + Y^{7} + Z^{7}$\\\hline
% isomorphic to $7,(1,3)$ through $[X,Z,Y]$$(1, 5) 7$ & $X^{4} Y^{2} Z {{\beta_{3,2}}} + X^{2} Y Z^{4} {{\beta_{5,1}}} + X Y^{4} Z^{2} {{\beta_{6,4}}} + X^{7} + Y^{7} + Z^{7}$\\\hline
% isomorphic to $7,(1,2)$ through $[Y,X,Z]$$(1, 6) 7$ & $X^{5} Y Z {{\beta_{2,1}}} + X^{3} Y^{2} Z^{2} {{\beta_{4,2}}} + X Y^{3} Z^{3} {{\beta_{6,3}}} + X^{7} + Y^{7} + Z^{7}$\\\hline
$7,(0,1)$& $Z^{7}+L_{7,Z}$ \\\hline

 \end{tabular}
\end{table}
\end{center}
\end{small}

\begin{small}
\begin{center}
\begin{tabular}{|c|c|}
  \hline
  % after \\: \hline or \cline{col1-col2} \cline{col3-col4} ...
  %Type: $m, (a,b)$ & $F(X;Y;Z)$ \\\hline\hline

  $6,(1,2)$& $X^{7} + X Z^{6}+\alpha X Y^{6}+ {\beta_{3,0}}X^{4} Z^{3}+ {\beta_{4,2}}X^{3} Y^{2} Z^{2}+ {\beta_{5,4}}X^{2} Y^{4} Z+ {\beta_{7,2}}Y^{2} Z^{5}$ \\\hline
  $6,(2,3)$& $X^{7} + X Z^{6}+\alpha X Y^{6}+ {\beta_{2,0}}X^{5} Z^{2}  + {\beta_{3,3}}X^{4} Y^{3}  + {\beta_{4,0}}X^{3} Z^{4}  + {\beta_{5,3}}X^{2}Y^{3} Z^{2}+ {\beta_{7,3}}Y^{3} Z^{4}$ \\\hline
% of the form $X.G$ $\textbf{6, (1, 3)}$ & $X^{7} + X Z^{6} +\alpha X Y^{6} + {{\beta_{2,0}}} X^{5} Z^{2} + {{\beta_{6,3}}}X Y^{3} Z^{3}  +  X^{3}{\left({{\beta_{4,0}}} Z^{4} + {{\beta_{4,3}}}Y^{3} Z \right)}$\\\hline
% of the form $X.G$$\textbf{6, (1, 4)}$& $X^{7} + X Z^{6} + \alpha X Y^{6} + X^{4}{\left({{\beta_{3,0}}}Z^{3}  + {{\beta_{3,2}}}Y^{2} Z \right)}  + {\left({{\beta_{6,2}}}Y^{2} Z^{4}  + {{\beta_{6,4}}}Y^{4} Z^{2} \right)} X$\\\hline
% of the form $X.G$$\textbf{6, (1, 5)}$ &$X^{7} + X Z^{6}+\alpha X Y^{6}  + {{\beta_{2,1}}}X^{5} Y Z  + {{\beta_{4,2}}}X^{3} Y^{2} Z^{2}  + {{\beta_{6,3}}}X Y^{3} Z^{3}$\\\hline
% of the form $X.G$$\textbf{6, (2, 5)}$& $X^{7} + X Z^{6} + \alpha X Y^{6} +  X^{4}\left(\beta_{3,1} Y Z^{2} + {{\beta_{3,3}}}Y^{3} \right)  + X\left({{\beta_{6,2}}}Y^{2} Z^{4}  + {{\beta_{6,4}}}Y^{4} Z^{2} \right)$ \\\hline
% isomorphic to $6,(2,3)$ through $[X,Z,Y]$$\textbf{6, (3, 4)}$ & $ X^{7} + X Z^{6}+\alpha X Y^{6}  + {{\beta_{2,2}}}X^{5} Y^{2}  + {{\beta_{3,0}}}X^{4} Z^{3}  + {{\beta_{4,4}}}X^{3} Y^{4}  + {{\beta_{5,2}}}X^{2} Y^{2} Z^{3}  + {{\beta_{7,4}}}Y^{4} Z^{3} $ \\\hline
% of the form $X.G$$\textbf{6,(3, 5)}$ & $X^{7} + X Z^{6} + X Y^{6} \alpha + X^{5} Y^{2} {{\beta_{2,2}}} + X Y^{3} Z^{3} {{\beta_{6,3}}} +  {\left(Y Z^{3} {{\beta_{4,1}}} + Y^{4} {{\beta_{4,4}}}\right)} X^{3}$\\\hline
% isomorphic to $6,(1,2)$ through $[X,Z,Y]$$\textbf{6, (4, 5)}$ & $X^{7} + X Z^{6}+X Y^{6} \alpha + X^{4} Y^{3} {{\beta_{3,3}}} + X^{3} Y^{2} Z^{2} {{\beta_{4,2}}} + X^{2} Y Z^{4} {{\beta_{5,1}}} + Y^{5} Z^{2} {{\beta_{7,5}}} $\\\hline
$6,(0,1)$& $Z^{6}L_{1,Z} + L_{7,Z}$ \\\hline
 $5,(1,4)$& $X^7+Y^6Z+\alpha YZ^6+\beta_{2,1}X^{5}YZ+ \beta_{4,2}X^{3} Y^{2} Z^{2}+ \beta_{6,3}X Y^{3} Z^{3}+X^{2}{\left({\beta_{5,0}}Z^{5} + {\beta_{5,5}}Y^{5} \right)}  $ \\\hline
 $4,(1,2)$& $ X^{7} + Y^{6} Z+\alpha X Z^{6}  + {\beta_{2,0}}X^{5} Z^{2}  + {\beta_{3,2}}X^{4} Y^{2} Z  + {\beta_{5,2}}X^{2} Y^{2} Z^{3}  + {\beta_{6,4}}X Y^{4} Z^{2} + {\beta_{7,2}}Y^{2} Z^{5}+ $ \\
 & $+ X^{3}{\left({\beta_{4,0}}Z^{4}  + {\beta_{4,4}}Y^{4} \right)}$ \\\hline
 $3,(1,2)$& $X^{7} + X Z^{6}+\alpha X Y^{6}  + {\beta_{2,1}}X^{5} Y Z  + {\beta_{4,2}}X^{3} Y^{2} Z^{2}  + {\beta_{6,3}}X Y^{3} Z^{3}  + {\beta_{7,2}}Y^{2} Z^{5} + {\beta_{7,5}}Y^{5} Z^{2}+$ \\
 & $X^{4}\left(\beta_{3,0}Z^{3} + {\beta_{3,3}}Y^{3} \right)  + X^{2}{\left({\beta_{5,1}}Y Z^{4} + {\beta_{5,4}}Y^{4} Z \right)}$ \\\hline
 $3,(0,1)$& $Z^{6}L_{1,Z}+Z^3L_{4,Z} + L_{7,Z}$ \\\hline
 $2,(0,1)$& $Z^{6}L_{1,Z}+Z^4L_{3,Z}+Z^2L_{5,Z} + L_{7,Z}$ \\\hline

  \end{tabular}
%\end{table}
\end{center}
%
%
%\begin{center}
%\begin{tabular}{|c|c|}
%  \hline
  % after \\: \hline or \cline{col1-col2} \cline{col3-col4} ...
%
%  \end{tabular}
%\end{table}
%
%\end{center}
%
\end{small}
\begin{small}
\begin{center}
\begin{table}[!th]
  \renewcommand{\arraystretch}{1.3}
  \caption{degree 8\,\,\,}\label{table:Cyclic Auto82.}
  \vspace{4mm} % hack
  \centering
\begin{tabular}{|c|c|}
  \hline
  % after \\: \hline or \cline{col1-col2} \cline{col3-col4} ...
  Type: $m, (a,\,b)$ & $F(X;Y;Z)$ \\\hline\hline
  $56,(7,8)$& $X^{8} + Y^{8}+\alpha X Z^{7}  $ \\\hline
  $49,(1,42)$& $ X^{8} + Y^{7} Z+\alpha X Z^{7}  $ \\\hline
$48,(1,41)$& $X^{8} + Y^{7} Z+\alpha Y Z^{7} $ \\\hline
$43,(1,37)$&$ X^{7} Y + Y^{7} Z+\alpha X Z^{7} $ \\\hline
$28,(7,8)$& $X^{8}+Y^{8}+\alpha X Z^{7}  + {\beta_{4,4}}X^{4} Y^{4} $ \\\hline
$24,(1,17)$& $X^{8} + Y^{7} Z+\alpha Y Z^{7}+{\beta_{8,4}}Y^{4}
Z^{4} $ \\\hline
$16,(1,9)$& $X^{8} + Y^{7} Z+\alpha YZ^{7}+{\beta_{8,5}}Y^{5} Z^{3}  + {\beta_{8,3}}Y^{3} Z^{5} $
\\\hline

 % \end{tabular}
%\end{table}
%\end{center}
%
%
%\begin{center}
%\begin{tabular}{|c|c|}
%  \hline
  % after \\: \hline or \cline{col1-col2} \cline{col3-col4} ...

 % \end{tabular}
%\end{table}
%
%\end{center}
%
%\end{small}
%
%
%\begin{small}
%\begin{center}
%\begin{tabular}{|c|c|}
%  \hline
  % after \\: \hline or \cline{col1-col2} \cline{col3-col4} ...
$14,(7,8)$& $X^{8} + Y^{8}+\alpha X Z^{7}  +
{\beta_{2,2}}X^{6} Y^{2}  + {\beta_{4,4}}X^{4} Y^{4}  +
{\beta_{6,6}}X^{2} Y^{6} $ \\\hline

$12,(1,5)$& $X^{8} + Y^{7}
Z+\alpha Y Z^{7} +{\beta_{8,4}}Y^{4} Z^{4}  + {\beta_{4,2}}X^{4}
Y^{2} Z^{2}$\\\hline
$8,(1,2)$& $X^{8} + Y^{8} +
Z^{8}+{\beta_{4,0}} X^{4} Z^{4} + {\beta_{5,2}}X^{3} Y^{2}
Z^{3}+{\beta_{6,4}}X^{2} Y^{4} Z^{2}+{\beta_{7,6}}X Y^{6} Z $
\\\hline
$8,(1,3)$& $X^{8} + Y^{8} + Z^{8} +{\beta_{4,2}}X^{4} Y^{2}
Z^{2}  + {\beta_{8,4}}Y^{4} Z^{4}  +  X^{2}{\left({\beta_{6,1}}Y
Z^{5}  + {\beta_{6,5}}Y^{5} Z \right)} $ \\\hline
$8,(1,4)$& $X^{8}
+ Y^{8} + Z^{8}+{\beta_{2,0}}X^{6} Z^{2}  + {\beta_{4,0}}X^{4} Z^{4}
+ {\beta_{5,4}}X^{3} Y^{4} Z  + {\beta_{6,0}}X^{2} Z^{6} +
{\beta_{7,4}}X Y^{4} Z^{3} $ \\\hline
%isomorphic to $8(1,4)$ through $[Y,X,Z]$$(1, 5) 8$ & $Y^{2} Z^{6} {{\beta_{8,2}}} + Y^{4} Z^{4} {{\beta_{8,4}}} + Y^{6} Z^{2} {{\beta_{8,6}}} + X^{8} + Y^{8} + Z^{8} + {\left(Y Z^{3} {{\beta_{4,1}}} + Y^{3} Z {{\beta_{4,3}}}\right)} X^{4}$\\\hline
%isomorphic to $8(1,3)$ through $[Y,X,Z]$$(1, 6) 8$ & $X^{5} Y^{2} Z {{\beta_{3,2}}} + X^{4} Z^{4} {{\beta_{4,0}}} + X^{2} Y^{4} Z^{2} {{\beta_{6,4}}} + X Y^{2} Z^{5} {{\beta_{7,2}}} + X^{8} + Y^{8} + Z^{8}$\\\hline
%isomorphic to $8(1,2)$ through $[Y,X,Z]$$(1, 7) 8$ & $X^{6} Y Z {{\beta_{2,1}}} + X^{4} Y^{2} Z^{2} {{\beta_{4,2}}} + X^{2} Y^{3} Z^{3} {{\beta_{6,3}}} + Y^{4} Z^{4} {{\beta_{8,4}}} + X^{8} + Y^{8} + Z^{8}$\\\hline
%isomorphic to $8(1,3)$ through $[Z,Y,X]$$(2, 3) 8$ &$X^{5} Y Z^{2} {{\beta_{3,1}}} + X^{4} Y^{4} {{\beta_{4,4}}} + X^{2} Y^{2} Z^{4} {{\beta_{6,2}}} + X Y^{5} Z^{2} {{\beta_{7,5}}} + X^{8} + Y^{8} + Z^{8}$\\\hline
%isomorphic to $8(1,2)$ through $[X,Z,Y]$$(2, 5) 8$ & $X^{4} Y^{4} {{\beta_{4,4}}} + X^{3} Y^{3} Z^{2} {{\beta_{5,3}}} + X^{2} Y^{2} Z^{4} {{\beta_{6,2}}} + X Y Z^{6} {{\beta_{7,1}}} + X^{8} + Y^{8} + Z^{8}$\\\hline
%isomorphic to $8(1,4)$ through $[X,Z,Y]$$(4, 5) 8$ & $X^{6} Y^{2} {{\beta_{2,2}}} + X^{4} Y^{4} {{\beta_{4,4}}} + X^{3} Y Z^{4} {{\beta_{5,1}}} + X^{2} Y^{6} {{\beta_{6,6}}} + X Y^{3} Z^{4} {{\beta_{7,3}}} + X^{8} + Y^{8} + Z^{8}$\\\hline
$8,(0,1)$& $Z^{8} + {L_{8,Z}}$
\\\hline
$7,(1,2)$& $X^{8} + X Z^{7}+\alpha X Y^{7}  +
{\beta_{4,1}}X^{4} Y Z^{3}  + {\beta_{5,3}}X^{3} Y^{3} Z^{2}  +
{\beta_{6,5}}X^{2} Y^{5} Z  + {\beta_{8,2}} Y^{2} Z^{6}$ \\\hline
$7,(1,3)$& $X^{8} + X Z^{7}+\alpha X Y^{7}  + {\beta_{3,1}}X^{5} Y
Z^{2}  + {\beta_{5,4}}X^{3} Y^{4} Z  + {\beta_{6,2}}X^{2} Y^{2}
Z^{4}  + {\beta_{8,5}}Y^{5} Z^{3}$ \\\hline
% isomorphic to $7,(1,2)$ through $[x,z,y]$$(1, 4) 7$ & $X Y^{7} \alpha + X^{4} Y^{3} Z {{\beta_{4,3}}} + X^{3} Y^{2} Z^{3} {{\beta_{5,2}}} + X^{2} Y Z^{5} {{\beta_{6,1}}} + Y^{6} Z^{2} {{\beta_{8,6}}} + X^{8} + X Z^{7}$\\\hline
% isomorphic to $7,(1,3)$ through $[x,z,y]$$(1, 5) 7$ & $X Y^{7} \alpha + X^{5} Y^{2} Z {{\beta_{3,2}}} + X^{3} Y Z^{4} {{\beta_{5,1}}} + X^{2} Y^{4} Z^{2} {{\beta_{6,4}}} + Y^{3} Z^{5} {{\beta_{8,3}}} + X^{8} + X Z^{7}$\\\hline
$7, (1, 6)$ & $X^{8} + X Z^{7}+\alpha X Y^{7} + {{\beta_{2,1}}} X^{6} Y Z + {{\beta_{4,2}}}X^{4} Y^{2} Z^{2} + {{\beta_{6,3}}}X^{2} Y^{3} Z^{3}  + {{\beta_{8,4}}}Y^{4} Z^{4}$\\\hline
$7,(0,1)$&$Z^{7}{L_{1,Z}}  + {L_{8,Z}}$ \\\hline
$6,(1,5)$& $X^{8} + Y^{7} Z +\alpha Y Z^{7}  + {\beta_{2,1}} X^{6} Y Z  + {\beta_{4,2}}X^{4} Y^{2} Z^{2} +{\beta_{8,4}}Y^{4} Z^{4} $ \\
& $+X^{2}{\left({\beta_{6,0}}Z^{6}  + {\beta_{6,3}}Y^{3} Z^{3}  +
{\beta_{6,6}}Y^{6} \right)}  $ \\\hline $4,(0,1)$& $Z^{8} +
Z^{4}{L_{4,Z}}  + {L_{8,Z}}$ \\\hline
$3,(1,2)$& $X^{8} + Y^{7} Z +\alpha Y Z^{7}  +{\beta_{8,4}}Y^{4} Z^{4}  +  {\beta_{2,1}}X^{6} Y Z  + {\beta_{4,2}}X^{4} Y^{2} Z^{2}  +  X^{5}{\left({\beta_{3,0}}Z^{3}  + {\beta_{3,3}}Y^{3} \right)} + $ \\
& $+ X^{3}{\left({\beta_{5,1}}Y Z^{4}  + {\beta_{5,4}}Y^{4} Z
\right)}  + X^{2}{\left({\beta_{6,0}}Z^{6}  + {\beta_{6,3}}Y^{3}
Z^{3}  + {\beta_{6,6}}Y^{6} \right)}  + X{\left({\beta_{7,2}}Y^{2}
Z^{5}  + {\beta_{7,5}}Y^{5} Z^{2} \right)} $ \\\hline $2,(0,1)$&
$Z^{8} + Z^{6}{L_{2,Z}}  + Z^{4}{L_{4,Z}}  + Z^{2}{L_{6,Z}}  +
{L_{8,Z}}$ \\\hline

 % \end{tabular}
%\end{table}
%\end{center}
%
%
%\begin{center}
%\begin{tabular}{|c|c|}
%  \hline
  % after \\: \hline or \cline{col1-col2} \cline{col3-col4} ...

  \end{tabular}
\end{table}
\end{center}

\end{small}

\newpage

\begin{small}
\begin{center}
\begin{table}[!th]
  \renewcommand{\arraystretch}{1.3}
  \caption{degree 9\,\,\,}\label{table:Cyclic Auto92.}
  \vspace{4mm} % hack
  \centering
\begin{tabular}{|c|c|}
  \hline
  % after \\: \hline or \cline{col1-col2} \cline{col3-col4} ...
  Type: $m, (a,\,b)$ & $F(X;Y;Z)$ \\\hline\hline
$72,(8,9)$& $X^{9} + Y^{9}+\alpha X Z^{8}$ \\\hline $64,(1,56)$&
$X^{9} + Y^{8} Z+\alpha X Z^{8}$ \\\hline $63,(1,55)$& $X^{9} +
Y^{8} Z+\alpha Y Z^{8}$ \\\hline $57,(1,50)$& $X^{8} Y + Y^{8}
Z+\alpha X Z^{8} $ \\\hline $36,(4,9)$& $X^{9} + Y^{9}+\alpha X
Z^{8}+ {\beta_{4,0}}X^{5} Z^{4} $ \\\hline $32,(1,24)$& $X^{9} +
Y^{8} Z+\alpha X Z^{8}+ {\beta_{4,0}}X^{5} Z^{4} $ \\\hline
$24,(8,9)$& $X^{9} + Y^{9}+\alpha X Z^{8}+ {\beta_{3,3}}X^{6} Y^{3}
+ {\beta_{6,6}}X^{3} Y^{6} $ \\\hline $21,(1,13)$& $X^{9} + Y^{8}
Z+\alpha Y Z^{8}  + {\beta_{6,3}}X^{3} Y^{3} Z^{3} $ \\\hline
$18,(2,9)$& $X^{9} + Y^{9}+\alpha X Z^{8}+ {\beta_{2,0}}X^{7} Z^{2}
+ {\beta_{4,0}}X^{5} Z^{4}  + {\beta_{6,0}}X^{3} Z^{6} $ \\\hline
$16,(1,8)$& $X^{9} + Y^{8} Z+\alpha X Z^{8}+ {\beta_{2,0}}X^{7}
Z^{2}  + {\beta_{4,0}}X^{5} Z^{4}  + {\beta_{6,0}}X^{3} Z^{6}  $
\\\hline
$12,(4,9)$& $X^{9} + Y^{9}+\alpha X Z^{8}  +
{\beta_{3,3}}X^{6} Y^{3}  + {\beta_{4,0}}X^{5} Z^{4}  +
{\beta_{6,6}}X^{3} Y^{6}  + {\beta_{7,3}}X^{2} Y^{3} Z^{4} $
\\\hline
$9,(1,2)$& $X^{9} + Y^{9} + Z^{9}+{\beta_{5,1}}X^{4} Y
Z^{4}  + {\beta_{6,3}}X^{3} Y^{3} Z^{3}  + {\beta_{7,5}}X^{2} Y^{5}
Z^{2}  + {\beta_{8,7}}X Y^{7} Z $ \\\hline
$9,(1,3)$& $X^{9} + Y^{9}
+ Z^{9}+{\beta_{3,0}}X^{6} Z^{3}  + {\beta_{5,3}}X^{4} Y^{3} Z^{2}
+ {\beta_{6,0}}X^{3} Z^{6}  + {\beta_{7,6}}X^{2} Y^{6} Z +
{\beta_{8,3}}X Y^{3} Z^{5}  $ \\\hline
$9,(0,1)$& $Z^{9} +{L_{9,Z}}$ \\\hline
$8,(1,2)$& $X^{9} + X Z^{8}+\alpha X Y^{8}  +
{\beta_{4,0}}X^{5} Z^{4}  + {\beta_{5,2}}X^{4} Y^{2} Z^{3}  +
{\beta_{6,4}}X^{3} Y^{4} Z^{2}  + {\beta_{7,6}}X^{2} Y^{6} Z  +
{\beta_{9,2}}Y^{2} Z^{7}$ \\\hline
$8,(1,4)$& $X^{9} + XZ^{8}+\alpha X Y^{8} + {\beta_{2,0}}X^{7} Z^{2}  +
{\beta_{4,0}}X^{5} Z^{4}  + {\beta_{5,4}}X^{4} Y^{4} Z  +
{\beta_{6,0}}X^{3} Z^{6}  + {\beta_{7,4}}X^{2} Y^{4} Z^{3}  +
{\beta_{9,4}}Y^{4} Z^{5} $ \\\hline
$8,(1,6)$& $X^{9} + XZ^{8}+\alpha X Y^{8} +{\beta_{3,2}}X^{6} Y^{2} Z  +
{\beta_{4,0}}X^{5} Z^{4}  +  {\beta_{6,4}}X^{3} Y^{4} Z^{2} +
{\beta_{7,2}}X^{2} Y^{2} Z^{5} +  {\beta_{9,6}}Y^{6} Z^{3} $
\\\hline
%of the form $X.G$$(1, 3) 8$ & $X Y^{8} \alpha + X^{5} Y^{2} Z^{2} {{\beta_{4,2}}} + X Y^{4} Z^{4} {{\beta_{8,4}}} + X^{9} + X Z^{8} + {\left(Y Z^{5} {{\beta_{6,1}}} + Y^{5} Z {{\beta_{6,5}}}\right)} X^{3}$\\\hline
%of the form $X.G$$(1, 5) 8$ & $X Y^{8} \alpha + X^{9} + X Z^{8} + {\left(Y Z^{3} {{\beta_{4,1}}} + Y^{3} Z {{\beta_{4,3}}}\right)} X^{5} + {\left(Y^{2} Z^{6} {{\beta_{8,2}}} + Y^{4} Z^{4} {{\beta_{8,4}}} + Y^{6} Z^{2} {{\beta_{8,6}}}\right)} X$\\\hline
%of the form $X.G$$(1, 7) 8$ & $ X Y^{8} \alpha + X^{7} Y Z {{\beta_{2,1}}} + X^{5} Y^{2} Z^{2} {{\beta_{4,2}}} + X^{3} Y^{3} Z^{3} {{\beta_{6,3}}} + X Y^{4} Z^{4} {{\beta_{8,4}}} + X^{9} + X Z^{8}$\\\hline
% isomorphic to $8, (1,6)$ through $[X,Z,Y]$ $(2, 3) 8$ & $X Y^{8} \alpha + X^{6} Y Z^{2} {{\beta_{3,1}}} + X^{5} Y^{4} {{\beta_{4,4}}} + X^{3} Y^{2} Z^{4} {{\beta_{6,2}}} + X^{2} Y^{5} Z^{2} {{\beta_{7,5}}} + Y^{3} Z^{6} {{\beta_{9,3}}} + X^{9} + X Z^{8}$\\\hline
% isomorphic to $8, (1,2)$ through $[X,Z,Y]$$(2, 5) 8$ & $X Y^{8} \alpha + X^{5} Y^{4} {{\beta_{4,4}}} + X^{4} Y^{3} Z^{2} {{\beta_{5,3}}} + X^{3} Y^{2} Z^{4} {{\beta_{6,2}}} + X^{2} Y Z^{6} {{\beta_{7,1}}} + Y^{7} Z^{2} {{\beta_{9,7}}} + X^{9} + X Z^{8}$\\\hline
% isomorphic to $8, (1,2)$ through $[X,Z,Y]$$(4, 5) 8$ & $X Y^{8} \alpha + X^{7} Y^{2} {{\beta_{2,2}}} + X^{5} Y^{4} {{\beta_{4,4}}} + X^{4} Y Z^{4} {{\beta_{5,1}}} + X^{3} Y^{6} {{\beta_{6,6}}} + X^{2} Y^{3} Z^{4} {{\beta_{7,3}}} + Y^{5} Z^{4} {{\beta_{9,5}}} + X^{9} + X Z^{8}$\\\hline
$8,(0,1)$& $Z^{8}{L_{1,Z}}+{L_{9,Z}}$ \\\hline

 % \end{tabular}
%\end{table}
%\end{center}
%
%
%\begin{center}
%\begin{tabular}{|c|c|}
%  \hline
  % after \\: \hline or \cline{col1-col2} \cline{col3-col4} ...
$7,(1,6)$& $X^{9} + Y^{8} Z +\alpha Y Z^{8}  + {\beta_{2,1}}X^{7} Y Z  + {\beta_{4,2}}X^{5} Y^{2} Z^{2}  + {\beta_{6,3}}X^{3} Y^{3} Z^{3}  + $ \\
& $+{\beta_{8,4}}X Y^{4} Z^{4}+ X^{2}\left({\beta_{7,0}}Z^{7}  + {\beta_{7,7}}Y^{7} \right)$ \\\hline
$6,(2,3)$& $X^{9} + Y^{9} +\alpha X Z^{8}  + {\beta_{2,0}}X^{7} Z^{2}  + {\beta_{3,3}}X^{6} Y^{3}  + {\beta_{4,0}}X^{5} Z^{4}  + {\beta_{5,3}}X^{4} Y^{3} Z^{2}+ $ \\
& $+{\beta_{7,3}}X^{2} Y^{3} Z^{4}  + {\beta_{7,3}}Y^{3} Z^{6}  +  {\beta_{8,6}}Y^{6} Z^{3}  + X^{3}{\left({\beta_{6,0}}Z^{6}  + {\beta_{6,6}}Y^{6} \right)} $ \\\hline

%
%\begin{small}
%\begin{center}
%\begin{tabular}{|c|c|}
%  \hline
%  % after \\: \hline or \cline{col1-col2} \cline{col3-col4} ...

$4,(1,2)$& $X^{9} + X Z^{8} +\alpha X Y^{8}  + {\beta_{2,0}}X^{7} Z^{2}  + {\beta_{3,2}}X^{6} Y^{2} Z  + {\beta_{5,2}}X^{4} Y^{2} Z^{3}  + {\beta_{8,4}}X Y^{4} Z^{4}  + $ \\
& ${\beta_{9,2}}Y^{2} Z^{7}  + {\beta_{9,6}}Y^{6} Z^{3} +
X^{5}{\left({\beta_{4,0}}Z^{4}  + {\beta_{4,4}}Y^{4} \right)}  +
X^{3}{\left({\beta_{6,0}}Z^{6}  + {\beta_{6,4}}Y^{4} Z^{2} \right)}
+ X^{2}{\left({\beta_{7,2}}Y^{2} Z^{5}  + {\beta_{7,6}}Y^{6} Z
\right)} $ \\\hline
% of the form $X.G$ $(1, 3) 4$ & $X Y^{8} \alpha + X^{7} Y Z {{\beta_{2,1}}} + X^{9} + X Z^{8} + {\left(Z^{4} {{\beta_{4,0}}} + Y^{2} Z^{2} {{\beta_{4,2}}} + Y^{4} {{\beta_{4,4}}}\right)} X^{5} + {\left(Y Z^{5} {{\beta_{6,1}}} + Y^{3} Z^{3} {{\beta_{6,3}}} + Y^{5} Z {{\beta_{6,5}}}\right)} X^{3} + {\left(Y^{2} Z^{6} {{\beta_{8,2}}} + Y^{4} Z^{4} {{\beta_{8,4}}} + Y^{6} Z^{2} {{\beta_{8,6}}}\right)} X$\\\hline
%isomorphic to $4,(1,2)$ through $[X,Z,Y]$$(2, 3) 4$ & $X Y^{8} \alpha + X^{7} Y^{2} {{\beta_{2,2}}} + X^{6} Y Z^{2} {{\beta_{3,1}}} + X^{4} Y^{3} Z^{2} {{\beta_{5,3}}} + X Y^{4} Z^{4} {{\beta_{8,4}}}+Y^{3} Z^{6} {{\beta_{9,3}}} + Y^{7} Z^{2} {{\beta_{9,7}}} +$\\
%& $+ X^{9} + X Z^{8} + {\left(Z^{4} {{\beta_{4,0}}} + Y^{4} {{\beta_{4,4}}}\right)} X^{5} + {\left(Y^{2} Z^{4} {{\beta_{6,2}}} + Y^{6} {{\beta_{6,6}}}\right)} X^{3} + {\left(Y Z^{6} {{\beta_{7,1}}} + Y^{5} Z^{2} {{\beta_{7,5}}}\right)} X^{2}$\\\hline
$4,(0,1)$& $Z^{8}{L_{1,Z}}+ Z^{4}{L_{5,Z}}+
{L_{9,Z}}$ \\\hline $3,(0,1)$& $Z^{9} + Z^{6}{L_{3,Z}}  +
Z^{3}{L_{6,Z}}  + {L_{9,Z}}$ \\\hline $2,(0,1)$& $Z^{8}{L_{1,Z}} +
Z^{6}{L_{3,Z}}  + Z^{4}{L_{5,Z}}  + Z^{2}{L_{7,Z}}  + {L_{9,Z}}$
\\\hline

 % \end{tabular}
%\end{table}
%\end{center}
%
%
%\begin{center}
%\begin{tabular}{|c|c|}
%  \hline
  % after \\: \hline or \cline{col1-col2} \cline{col3-col4} ...
 \end{tabular}
\end{table}

\end{center}

\end{small}

\end{document}